\newtheorem{thm}{Theorem}[section]
\newtheorem{cor}[thm]{Corollary}
\newtheorem{prop}[thm]{Proposition}
\newtheorem{lem}[thm]{Lemma}
\newtheorem*{acknowledgements*}{Acknowledgements}
\theoremstyle{definition}
\newtheorem{rem}[thm]{Remark}
\title{Multifractality and intermittency 
in the limit evolution of polygonal vortex filaments
}
\author[V. Banica]{Valeria Banica}
 \address[V. Banica]{
Laboratoire Jacques-Louis Lions (LJLL),  
Sorbonne Universite, CNRS, Universit\'e de Paris, 
France
and 
Institut Universitaire de France (IUF), 
France
 }
\email{Valeria.Banica@math.cnrs.fr}
\author[D. Eceizabarrena]{Daniel Eceizabarrena}
 \address[D. Eceizabarrena]{
 Department of Mathematics and Statistics,	
 University of Massachusetts Amherst, 
 United States
 }
\email{deceizabarre@umass.edu}
\author[A. R. Nahmod]{Andrea R. Nahmod}
 \address[A. Nahmod]{
Department of Mathematics and Statistics,
University of Massachusetts Amherst, 
United States 
 }
\email{nahmod@umass.edu}
\author[L. Vega]{Luis Vega}
 \address[L. Vega]{
 BCAM - Basque Center for Applied Mathematics, 
 Spain, 
 and 
 Departamento de Matem\'aticas, 
 Universidad del País Vasco (UPV/EHU), 
 Spain
 }
\email{lvega@bcamath.org}
\subjclass[2020]{11J82, 11J83, 26A27, 28A78, 42A16, 76F99}
\keywords{Turbulence, multifractality, Riemann's non-differentiable function, vortex filaments, Diophantine approximation}
\begin{document}

\begin{abstract}
With the aim of quantifying turbulent behaviors of vortex filaments,  
we study the multifractality and intermittency of 
the family of generalized Riemann's non-differentiable functions
\begin{equation}
R_{x_0}(t) = \sum_{n \neq 0} \frac{e^{2\pi i ( n^2 t + n x_0 ) } }{n^2},
\qquad 
x_0 \in [0,1].
\end{equation}
These functions represent, in a certain limit, the trajectory of regular polygonal vortex filaments that evolve according to the binormal flow.
When $x_0$ is rational,  
we show that $R_{x_0}$ is multifractal and intermittent
by completely determining the spectrum of singularities of $R_{x_0}$ 
and computing the $L^p$ norms of its Fourier high-pass filters, which are analogues of structure functions. 
We prove that $R_{x_0}$ has a multifractal behavior 
also when $x_0$ is irrational. 
The proofs rely on a careful design of Diophantine sets that depend on $x_0$, which we study by 
using the Duffin-Schaeffer theorem and the Mass Transference Principle. 
\end{abstract}

\maketitle


\section{Introduction}

Multifractality and intermittency are among the main properties expected in turbulent flows
but, as usual in the theory of turbulence, it is challenging to analyze them rigorously. 
The motivation of this article is to quantify the multifractal and intermittent behavior of regular polygonal vortex filaments that evolve with the binormal flow.  
This evolution is represented, in a certain limit, by the function 
$R_{x_0}: \mathbb R \to \mathbb C$ defined by 
\begin{equation}\label{eq:R_x0}
R_{x_0}(t) = \sum_{n \neq 0} \frac{e^{2\pi i ( n^2 t + n x_0 ) }   }{n^2},
\end{equation} 
for $x_0 \in [0,1]$ fixed.
This function is one of the possible generalizations of the classic Riemann's non-differentiable function, 
which is recovered when $x_0 = 0$, 
and it can also be seen as the solution to a periodic Cauchy problem for the free Schr\"odinger equation. 
In this article we study the multifractality and intermittency of $R_{x_0}$, which until now was unknown for $x_0 \neq 0$:
\begin{itemize}
	\item When $x_0 \in \mathbb Q$, 
	 we completely describe the multifractality of $R_{x_0}$ by computing its spectrum of singularities (Theorem~\ref{thm:Main_Theorem_Rationals_Spectrum}).
	 We also compute the $L^p$ norms of its Fourier high-pass filters to deduce its intermittency exponents (Theorem~\ref{thm:Main_Theorem_Rationals_Intermittency}) and show that $R_{x_0}$ is intermittent.
	\item When $x_0 \not\in\mathbb Q$, we give a result that proves multifractality (Theorem~\ref{thm:Main_Theorem_Irrationals}) and strongly suggests that the spectrum of singularities depends on the irrationality of $x_0$, 
	and hence that it is different from when $x_0 \in \mathbb Q$.  
	\end{itemize}
The main novelty in this article is a careful design of Diophantine sets 
and the use of  the Duffin-Schaeffer theorem and the Mass Transference Principle to compute their measure and dimension.
When $x_0 \in \mathbb Q$, we use the partial Duffin-Schaeffer theorem as proved by Duffin and Schaeffer in \cite{DuffinSchaeffer1941}, 
while when $x_0 \not\in\mathbb Q$ we need the full strength of the theorem as proved by Koukoulopoulos and Maynard \cite{KoukoulopoulosMaynard2020}.
We give an overview of these arguments in Section~\ref{sec:Diophantine_Approximation}.
Before that, 
we introduce the concepts of multifractality and intermittency in Section~\ref{sec:MultifractalityAndIntermittency}, 
we discuss the connection of $R_{x_0}$ and vortex filaments in Section~\ref{sec:VFE}
and we state our results in Sections~\ref{sec:Setup} and \ref{sec:Results}.

\subsection{Multifractality and intermittency}
\label{sec:MultifractalityAndIntermittency}

The concepts of multifractality and intermittency arise in 
the study of three dimensional turbulence of fluids and waves, 
both characterized by low regularity and a chaotic behavior. 
These are caused by an energy cascade by which the energy injected in large scales is transferred to small scales. 
In this setting, large eddies constantly split in smaller eddies, 
generating sharp changes in the velocity magnitude. 
Moreover, this cascade is not expected to be uniform in space, 
and the rate at which these eddies decrease depends on their location.

Mathematically speaking, an option to measure the irregularity of the velocity $v$ is to compute the local H\"older regularity, 
that is, the largest $\alpha = \alpha(x)$ such that $|v(x + h) - v(x)| \lesssim |h|^\alpha$ when $|h| \to 0$. 
The lack of uniformity in space suggests that the H\"older level sets $D_\alpha = \{ \, x \, : \, \alpha(x) = \alpha \,  \}$ should be non-empty, and of different size, for many values of $\alpha$.
In this context, the spectrum of singularities is defined as $d(\alpha) = \operatorname{dim}_{\mathcal H} D_\alpha$, 
where $\operatorname{dim}_{\mathcal H}$ is the Hausdorff dimension,
and the velocity $v$ is said to be \textbf{multifractal} if $d(\alpha)$ takes values in multiple H\"older regularities $\alpha$. 

On the other hand, intermittency is a measure of the likelihood of localized bursts or outlier events.  
One way to quantify it is by analyzing the structure functions 
$S_p(h) = \langle |v(x+h) - v(x)|^p \rangle$ 
of the velocity when the scale $h$ tends to zero. 
More precisely, defining the flatness as
\begin{equation}\label{eq:Flatness}
F_4(h) = \frac{S_4(h)}{S_2(h)^2}, \qquad \text{ for very small } h, 
\end{equation}
we have \textbf{small-scale intermittency}\footnote{
Proposed by Frisch \cite[p.122, (8.2)]{Frisch1995} and Anselmet et al.
 \cite{AnselmetGagneHopfingerAntonia1984}.
} if $\lim_{h \to 0} F_4(h) = +\infty$. 
Assuming the typical power law 
\begin{equation}\label{eq:Zeta_p}
S_p(h) 
\simeq |h|^{\zeta_p}, 
\end{equation} 
it is usual to rephrase the definition of intermittency as $\zeta_4 - 2\zeta_2 < 0$ for the intermittency exponent\footnote{
In this setting, intermittency is regarded as a nonlinear correction to Kolmogorov's theory
(see \cite[Section 2.4]{BuckmasterVicol})
which predicted the exponents $\zeta_p$ to be a linear function of $p$ and hence $\zeta_4 - 2\zeta_2 = 0$ and, in general, $\zeta_p - p\zeta_2/2 = 0$.
}
$\zeta_p$. 
This definition, and in particular \eqref{eq:Flatness},
 is inspired by the probabilistic concept of kurtosis\footnote{The fourth standardized moment, sometimes also referred to as \textit{tailedness}.},
which quantifies how large the tails of the underlying probability distribution are.
A large kurtosis implies fat tails, which suggests that outlier events are more likely than for a normal distribution,  
agreeing with the widespread idea of non-Gaussianity. 
More generally, moments $F_p(h) = S_p(h)/S_2(h)^{p/2}$ of order $p \geq 4$ can be used to measure the tails of a probability distribution (see \cite[p.124]{Frisch1995}) and therefore intermittency, 
so it is common in recent physics literature to measure $\zeta_p$ for different $p$ 
(see \cite{RicardFalcon2023} and references therein, also \cite{ApolinarioChevillardMourrat2022} for a numeric intermittent model). 
The intermittency condition is then rewritten as $\zeta_p - p \zeta_2 /2 < 0$, a behavior that corresponds to a sublinear $\zeta_p$.

\subsection{$R_{x_0}$ as the trajectory of polygonal vortex filaments}
\label{sec:VFE}

The binormal flow is a model introduced by Da Rios\footnote{
Explored also by Levi-Civita in \cite{LeviCivita1932}.
} 
in 1906 \cite{DaRios1906} as an approximation to the evolution of a vortex filament according to Euler equation
and whose validity has been precisely and rigorously described theoretically by Fontelos and Vega in \cite{FontelosVega2023}
 in the setting of the Navier-Stokes equations.
This model describes the motion of the filament $\boldsymbol{X}: \mathbb R \times \mathbb R \to \mathbb R^3$, $\boldsymbol{X} = \boldsymbol{X}(x,t)$ by the equation $\boldsymbol{X}_t = \boldsymbol{X}_x \times \boldsymbol{X}_{xx}$. 
Inspired by Jerrard and Smets \cite{JerrardSmets2015},
De la Hoz and Vega \cite{DelaHozVega2014} observed numerically that
if the initial filament 
$\boldsymbol{X}_M(x,0)$ 
is a regular polygon with $M$ corners at the integers $x \in \mathbb Z$, 
then the trajectory of the corners 
$\boldsymbol{X}_M(0,t)$ 
is a plane curve which, identifying the plane with $\mathbb C$ and when $M$ is large, looks like
\begin{equation}\label{eq:Riemann_Simplification}
\phi(t) = \sum_{n \in \mathbb Z} \frac{e^{2\pi i n^2 t} - 1}{n^2} = 2\pi it - \frac{\pi^2}{3} + R_0(t).
\end{equation}
Moreover, 
let $\boldsymbol{\chi}_M(x,0)$ be an infinite polygonal line that loops the polygon of $M$ sides a finite but large number of times and ends in two half-lines, symmetrized at $x=0$.
Banica and Vega rigorously proved in \cite{BanicaVega2022} that, under certain hypotheses, its binormal flow evolution $\boldsymbol{\chi}_M(x,t)$ obtained in \cite{BanicaVega2020} satisfies 
\begin{equation}\label{eq:Phi_x0_Definition}
\lim_{M \to \infty} M \, \boldsymbol{\chi}_M(x_0,t) 
= \phi_{x_0}(t) 
:= \sum_{n \in \mathbb Z} \frac{e^{2 \pi i n^2 t} - 1}{n^2} \, e^{2\pi i n x_0}, \qquad \forall x_0 \in [0,1].
\end{equation}
We show in Figures~\ref{fig:Figures} and \ref{fig:TrajectoriesTogether} 
the image of $\phi_{x_0}$ for some values of $x_0$. 
Like in \eqref{eq:Riemann_Simplification}, 
noticing that the Fourier series $\displaystyle{\sum_{n \neq 0} \frac{ e^{2\pi i n x} }{n^2}}$ is $2\pi^2 \left(x^2 - x + \frac16 \right)$,
we can write
\begin{equation}
\phi_{x_0}(t) =2\pi i t - 2 \pi^2 \Big(x_0^2 - x_0  + \frac{1}{6}\Big) + R_{x_0}(t), 
\end{equation} 
which shows that $\phi_{x_0}$ and $R_{x_0}$ have the same regularity as functions of $t$. 
In other words, 
$R_{x_0}$ captures the regularity of the limit trajectory of polygonal vortex filaments 
that evolve with the binormal flow. 
This connection motivates us to study the multifractality and intermittency of $R_{x_0}$.

\begin{figure}
\begin{subfigure}{0.24\textwidth}
\begin{center}
\includegraphics[width=0.8\linewidth]{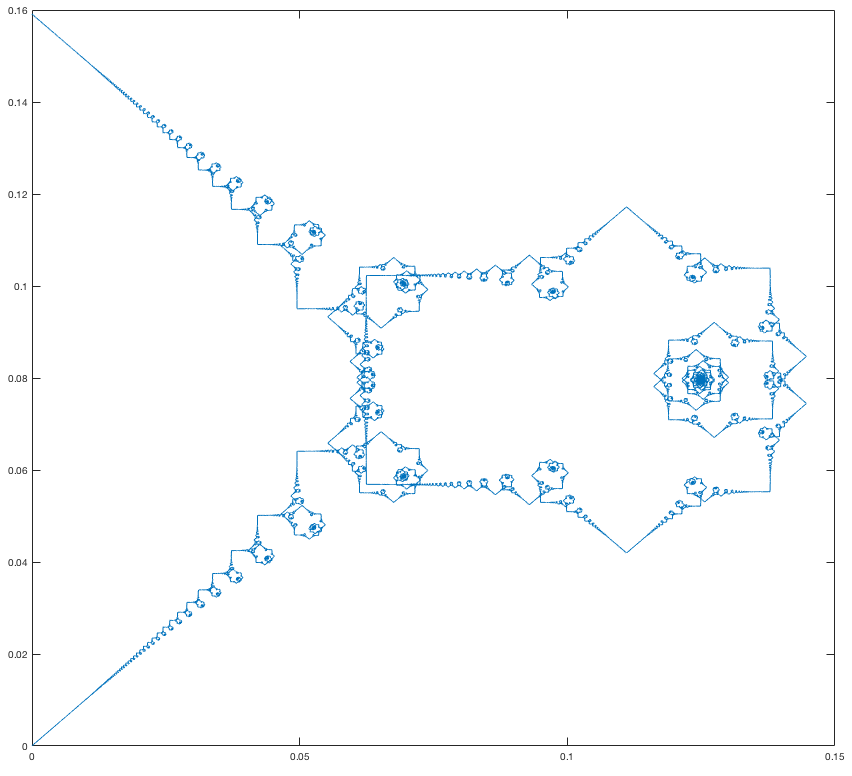}
\end{center}
\caption{$x_0 = 0$}
\end{subfigure}
\hfill
\begin{subfigure}{0.24\textwidth}
\includegraphics[width=\linewidth]{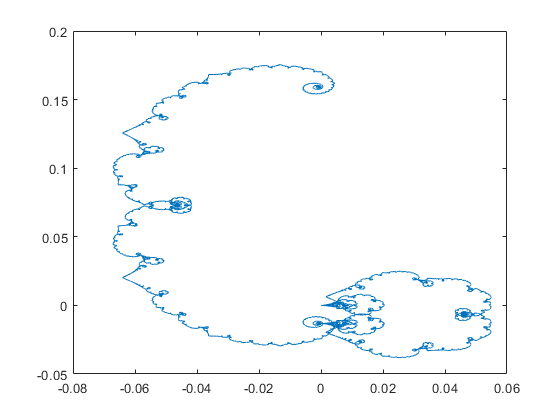}
\caption{$x_0 = 1/4$}
\end{subfigure}
\hfill
\begin{subfigure}{0.24\textwidth}
\includegraphics[width=\linewidth]{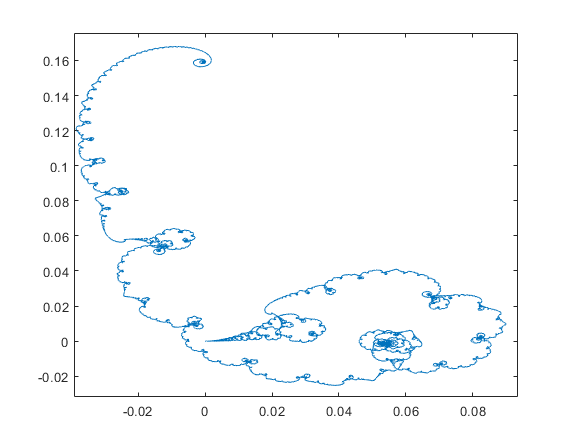}
\caption{$x_0 = 1/7$}
\end{subfigure}
\hfill
\begin{subfigure}{0.24\textwidth}
\includegraphics[width=\linewidth]{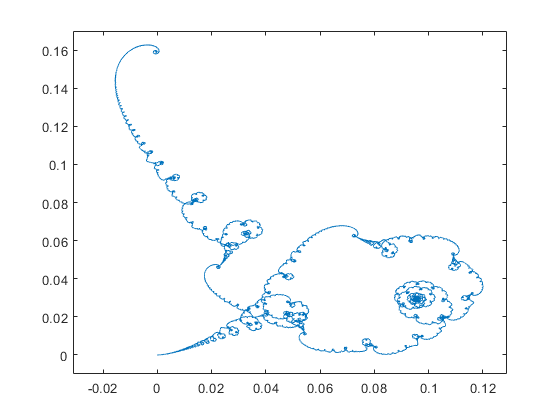}
\caption{$x_0 = 1/17$}
\end{subfigure}
\caption{Image of $\phi_{x_0}$, $t \in [0,1]$,
defined in \eqref{eq:Phi_x0_Definition}, 
 for some values of $x_0$.}
\label{fig:Figures}
\end{figure}

\begin{figure}
\includegraphics[scale=0.3]{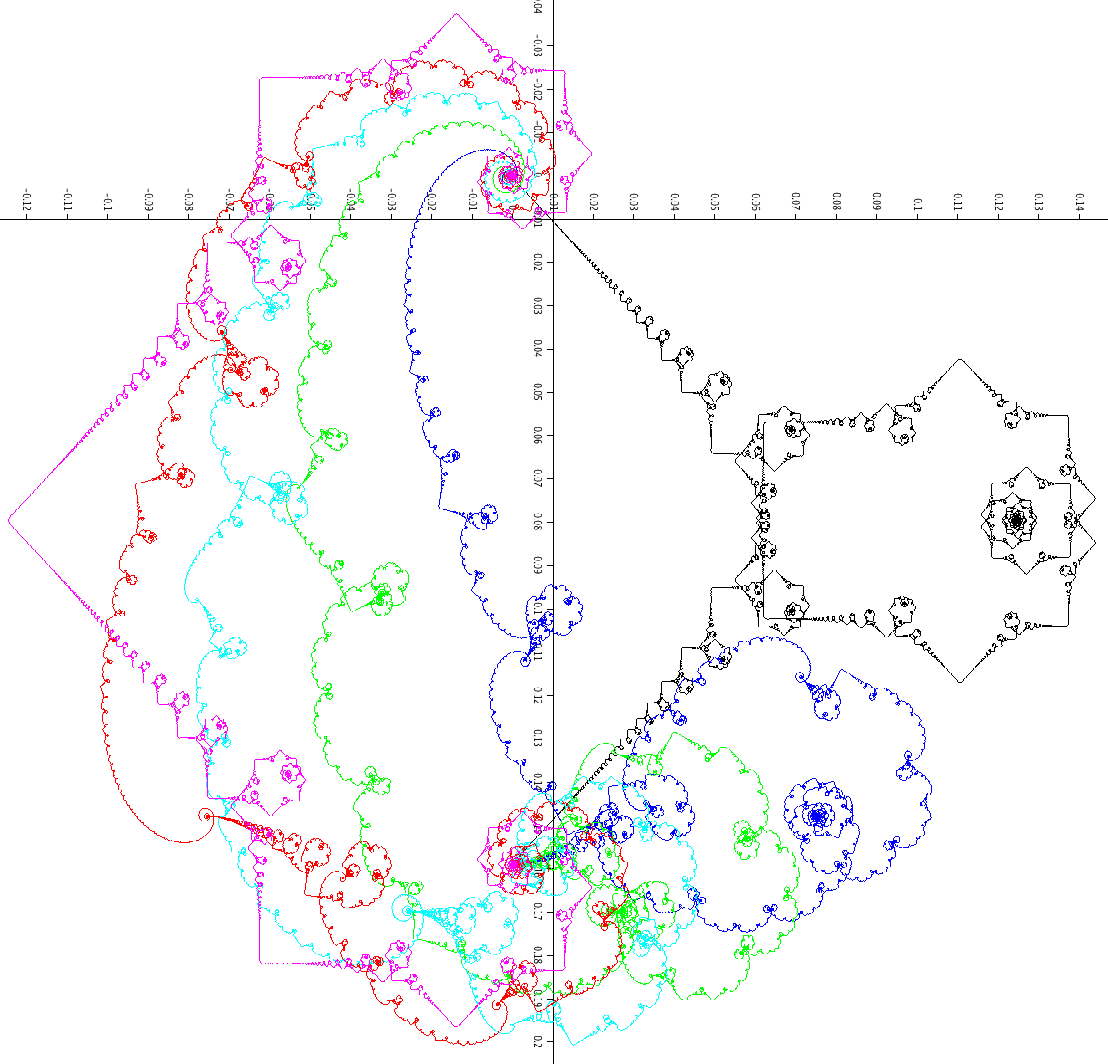}
\caption{The images of $\phi_{x_0}$, $t \in [0,1]$, for the values $x_0 = 0, 0.1, 0.2, 0.3, 0.4, 0.5$, 
from the rightmost to the leftmost.
}
\label{fig:TrajectoriesTogether}
\end{figure}

\subsection{Definitions and notation}\label{sec:Setup}

We now rigorously define the concepts discussed above. 

\subsubsection{Holder regularity}
 A function $f: \mathbb R \to \mathbb C$ is $\alpha$-H\"older at $t \in \mathbb R$, 
which we denote by $f \in \mathcal C^\alpha(t)$, if there exists a polynomial $P_t$ of degree at most $\alpha$ such that $ |f(t+h) - P_t(h)| \leq C |h|^\alpha$ for some constant $C > 0$ and for $h$ small enough.
In particular, if $0 < \alpha < 1$, the definition above becomes
\begin{equation}
f \in \mathcal C^\alpha(t) \quad \Longleftrightarrow \quad |f(t+h) - f(t)| \leq C |h|^\alpha, \quad \text{ for } h \text{ small enough}. 
\end{equation}
The local H\"older exponent of $f$ at $t$ is
$\alpha_f(t) = \sup \{  \,  \alpha \, : \, f \in \mathcal C^\alpha(t) \,   \}$.
We say $f$ is globally $\alpha$-H\"older if $f \in \mathcal C^\alpha(t)$ for all $t \in \mathbb R$.

\subsubsection{Spectrum of singularities}
The spectrum of singularities of $f$ is
\begin{equation}
d_f(\alpha) = \operatorname{dim}_{\mathcal H} \{ \, t \, : \, \alpha_f(t) = \alpha \, \}, 
\end{equation}
where $\operatorname{dim}_{\mathcal H}$ is the Hausdorff dimension\footnote{See 
\cite[Sections 3.1-3.2]{Falconer2014} for definitions and basic properties of Hausdorff measures and the Hausdorff dimension.}, 
and convene that $d(\alpha) = -\infty$ if $\{ \, t \, : \, \alpha_f(t) = \alpha \, \} = \emptyset$.

\subsubsection{Intermittency exponents}
 As discussed in \eqref{eq:Zeta_p}, the exponents $\zeta_p$ of the structure functions $S_p(h)$ describe the behavior of the increments of functions in small scales. 
Here we take the analogous approach of studying the 
high-frequency behavior of functions. 
Let $\Phi \in C^\infty (\mathbb R)$ be a cutoff function such that
$\Phi(x) = 0$ in a neighborhood of the origin 
and $\Phi(x) = 1$ for $|x| \geq 2$.  
For a periodic function $f$ with Fourier series $f(t) = \sum_{n \in \mathbb Z} a_n e^{2\pi i n t}$, 
define the
high-pass filter by 
\begin{equation}
P_{\geq N} f(t) = \sum_{n \in \mathbb Z} \Phi\Big(  \frac{n}{N} \Big) \,  a_n \, e^{2\pi i n t }, 
\qquad
N \in \mathbb N. 
\end{equation}
We treat the $L^p$ norms $\lVert P_{\geq N} f \rVert_p^p$ as the analytic and Fourier space analogues of the structure functions\footnote{We may think of the small scale $h$ to be represented by $1/N$, where $N$ is the frequency parameter}. 
Our analogous to the power law \eqref{eq:Zeta_p} is\footnote{
The heuristic exponent $\zeta_p$ in \eqref{eq:Zeta_p} and $\eta(p)$ defined in \eqref{eq:Eta_p_Intro} are a priori different. 
However, the definition of $\zeta_p$ can be made rigorous using $L^p$ norms so that it is equal to $\eta(p)$, as shown by Jaffard in \cite[Prop. 3.1]{Jaffard1997_1}.  
The exponent $\eta(p)$ is actually related to the Besov regularity of $f$.
Assuming $\lVert  P_{\geq N} f \rVert_p \simeq \lVert  P_{\simeq N} f \rVert_p$ (which is the case for $R_{x_0}$), where $P_{\simeq N} f$ denotes the band-pass filter defined with the cutoff $\Phi$ with the additional assumption of compact support, then $\eta(p) = \sup \{ \, s \, : \, f \in B^{s/p}_{p,\infty} \}$, 
where $f \in B^s_{p,q}$ if and only if $(2^{ks}\lVert P_{\simeq 2^k} f \rVert)_k \in \ell^q$. 
}

\begin{equation}\label{eq:Eta_p_Intro}
\eta_f(p) = \liminf_{N \to \infty} \frac{\log (\lVert  P_{\geq N} f \rVert_p^p ) }{\log (1/N)},
\end{equation}
which means that for any $\epsilon > 0$ we have
$ \lVert  P_{\geq N} f \rVert_p^p \leq  N^{-\eta_f(p) + \epsilon}$
for $N \gg_\epsilon 1$, 
and that this is optimal
in the sense that there is a subsequence $N_k \to \infty$ such that 
$ \lVert  P_{\geq N_k} f \rVert_p^p \geq  N_k^{-\eta_f(p) - \epsilon}$
for $k \gg_\epsilon 1$. 
We define the \textbf{$\boldsymbol{p}$-flatness} to be
\begin{equation}
F_p(N) = \frac{\lVert P_{\geq N} f \rVert_p^p}{\lVert P_{\geq N} f \rVert_2^p}, \qquad N \gg 1.
\end{equation}
The corresponding intermittency exponent\footnote{If the liminf in \eqref{eq:Eta_p_Intro} is a limit, then $\lVert P_{\geq N} f \rVert_p^p \simeq N^{-\eta_p}$ and hence $F_p(N) \simeq N^{-(\eta_f(p) - p\eta_f(2) /2 )}$.}  is $\eta_f(p) - p\, \eta_f(2)/2$.

\subsection{Results}\label{sec:Results}
To simplify notation, let us denote 
$\alpha_{R_{x_0}}(t) = \alpha_{x_0}(t)$, $d_{R_{x_0}}(\alpha) = d_{x_0}(\alpha)$
and $\eta_{R_{x_0}}(p) = \eta_{x_0}(p)$
for our function $R_{x_0}$ defined in \eqref{eq:R_x0}.

Since Weierstrass  \cite{Weierstrass1872} announced\footnote{
Weierstrass announced $R(t) = \sum_{n=1}^\infty \sin (n^2 t)/n^2$;
$R_0(t) = \sum_{n \neq 0}^\infty e^{2\pi i n^2 t}/n^2$ can be seen as its imaginary part.
} Riemann's non-differentiable function as the first candidate of a continuous and non-differentiable function in 1872, 
the regularity of $R_0$ has been studied by several authors.   
After Hardy \cite{Hardy1916} and Gerver \cite{Gerver1970,Gerver1971} proved that it is only almost nowhere differentiable (see also the simplified proof of Smith \cite{Smith1972}), Duistermaat \cite{Duistermaat1991} launched the study of its H\"older regularity. 
Jaffard completed the picture in his remarkable work \cite[Theorem 1]{Jaffard1996} (see also \cite{BrouckeVindas2021} for a recent alternative proof) by computing 
\begin{equation}\label{eq:Holder_For_Riemann}
\alpha_0(t) = \frac12 + \frac{1}{2\widetilde{\mu}(t)}, \qquad \text{ for } t \not\in \mathbb Q, 
\end{equation}
where
$\widetilde \mu(t)$ is the exponent of irrationality of $t$ restricted to denominators $q \not\equiv 2 \text{ (mod } 4)$\footnote{
Precisely, 
$\widetilde\mu(t) = \sup \{     \mu > 0 :  \big| t - \frac{p}{q} \big| \leq q^{-\mu} \text{ for infinitely many coprime pairs } (p,q) \in \mathbb N^2 \, 
\text{ with } q_n \not\equiv 2 \pmod{4}   \}.$
}. 
He combined this with an adaptation of the Jarn\'ik-Besicovitch theorem to prove
\begin{equation}\label{eq:Spectrum_At_0}
d_0(\alpha) = 
\left\{  
\begin{array}{ll}
4\alpha - 2, & 1/2 \leq \alpha \leq 3/4, \\
0, & \alpha = 3/2, \\
-\infty, & \text{ otherwise.}
\end{array}
\right. 
\end{equation}
Our first results concern the spectrum of singularities of $R_{x_0}$ for $x_0 \neq 0$. 
\begin{thm}\label{thm:Main_Theorem_Rationals_Spectrum}
Let $x_0 \in \mathbb Q$.
Then, 
\begin{equation}
d_{x_0}(\alpha) = 
\left\{
\begin{array}{ll}
4  \alpha - 2, & 1/2 \leq \alpha \leq 3/4, \\
0, & \alpha = 3/2, \\
-\infty, & \text{otherwise.}
\end{array} \right.
\end{equation}
\end{thm}

\begin{rem} 
\hfill
\begin{enumerate}
	\item[(a)] To prove Theorem~\ref{thm:Main_Theorem_Rationals_Spectrum},	
we adapt the classical approach due to Duistermaat \cite{Duistermaat1991} and Jaffard \cite{Jaffard1996} 
by carefully choosing subsets of the irrationals with novel Diophantine restrictions to disprove H\"older regularities.
However, 
the arguments in \cite{Jaffard1996} to compute their Hausdorff dimension do not suffice\footnote{The restriction for denominators in the case $x_0=0$ is essentially a parity condition, which is solved in \cite{Jaffard1996} by dividing the set by the factor 2. This does not generalize to the case $x_0 = P/Q$ where the condition for the denominator will be to be a multiple of $4Q$.}
 when $x_0 \neq 0$. 
We solve this by using the Duffin-Schaeffer theorem and the Mass Transference Principle; see Section~\ref{sec:Diophantine_Approximation} for the outline of the argument.
	\item[(b)] Even if $d_{x_0} = d_0$ for all $x_0 \in \mathbb Q$, we think that $\alpha_{x_0}(t) \neq \alpha_0(t)$.	
	However, Theorem~\ref{thm:Main_Theorem_Rationals_Spectrum} does not require computing $\alpha_{x_0}(t)$ for all $t \in \mathbb R$. 
	A full description of the sets $\{ \, t \, : \, \alpha_{x_0}(t) = \alpha \, \}$ is an interesting and challenging problem because when $x_0 \neq 0$ it is not clear how to characterize the H\"older regularity $\alpha_{x_0}(t)$ in terms of some irrationality exponent like in \eqref{eq:Holder_For_Riemann}.
	We do not pursue this problem here, which we leave for a future work. 
\end{enumerate}
\end{rem}

Let now $x_0 \not\in \mathbb Q$.
Let $p_n/q_n$ be its approximations by continued fractions, 
and define the exponents $\mu_n$ by $|x_0 - p_n/q_n| = 1/q_n^{\mu_n}$. 
Define the alternative\footnote{The usual exponent of irrationality is $\mu(x_0) = \limsup_{n \to \infty} \mu_n$.} exponent of irrationality 
\begin{equation}\label{eq:Exponent_Of_Irrationality_Alternative}
\sigma(x_0) = \limsup_{n \to \infty} \,  \{ \, \mu_n \, : \, q_n 
\not\in 4\mathbb N
\,   \}. 
\end{equation}
This exponent always exists and $\sigma(x_0) \geq 2$ (see Proposition~\ref{thm:Dimension_X_Mu}). 
Our result is the following.
\begin{thm}\label{thm:Main_Theorem_Irrationals}
Let $x_0 \not\in \mathbb Q$. 
Let $2 \leq \mu < 2\sigma(x_0)$, 
with $\sigma(x_0)$ as in \eqref{eq:Exponent_Of_Irrationality_Alternative}.
Then, for all $\delta > 0$, 
\begin{equation}\label{eq:Main_Theorem_Irrationals}
\frac{1}{\mu} \leq \operatorname{dim}_{\mathcal H} \left\{   \, t \, : \frac12 + \frac{1}{4\mu} - \delta  \leq \alpha_{x_0}(t) \leq \frac12 + \frac{1}{2\mu}    \right\}  \leq \frac{2}{\mu}. 
\end{equation}
\end{thm}
\begin{rem} 
\begin{enumerate}
	\item[(a)] We show in Figure~\ref{fig:Thm1.3} a graphic representation of Theorem~\ref{thm:Main_Theorem_Irrationals}.
	\item[(b)] Theorem~\ref{thm:Main_Theorem_Irrationals}  
	shows that $R_{x_0}$ is multifractal when $\sigma(x_0) > 2$. 
	\item[(c)] Theorem~\ref{thm:Main_Theorem_Irrationals} would be strengthened to $1/\mu \leq d_{x_0}(1/2 + 1/2\mu) \leq 2/\mu$ for $\mu < 2\sigma(x_0)$ if we could compute the dimension of some well-identified Diophantine sets, see Remark~\ref{rmk:Good_Diophantine_Set}. 
	This would give a nontrivial spectrum of singularities in an open interval for all $x_0 \not\in \mathbb Q$. 
	We leave this for a future work. 
	
	\item[(d)] The reasons to have an interval $(1/\mu, 2/\mu)$ for the dimension in \eqref{eq:Main_Theorem_Irrationals} seem to us deeper in nature. 
Unlike the upper bound  $2/\mu$, which follows from approximating $t$ with rationals $p/q$ with unrestricted $q\in \mathbb N$ and with error $q^{-\mu}$ (see the Jarn\'ik-Besicovitch theorem~\ref{thm:Jarnik_Besicovitch}), 
the lower bound depends on the nature of $x_0$ which imposes restrictions to $q$. 
When $x_0 = P/Q \in \mathbb Q$, we require $q \in 4Q\mathbb N$, which still results in a set of dimension $2/\mu$.
However, when $x_0 \not\in \mathbb Q$ we require $q$ be restricted to an exponentially growing sequence (given by the denominators of the continued fraction approximations of $x_0$). This restriction is much stronger and gives a set of $t$ of dimension $1/\mu$. 
These results follow from the Duffin-Schaeffer theorem and the Mass Transference Principle.

	\item[(e)] The theorem and its proof (see the heuristic discussion in Section~\ref{sec:Heuristics}) suggest that the spectrum of singularities may be $d_{x_0}(\alpha) = 4\alpha - 2$ in the range 
	$\frac12 + \frac{1}{4\sigma(x_0)} \leq \alpha \leq \frac34$, 
	and possibly something different outside of this range.
	In particular, we expect the segment of the spectrum in $5/8 \leq \alpha \leq 3/4$ to be present for all $x_0$. 
\end{enumerate}
\end{rem}

\begin{rem}
Our results suggest that the trajectories of the binormal flow do not have a generic behavior in terms of regularity. 
Indeed, if $X_n$ is a sequence of independent and identically distributed complex Gaussian random variables,
then the random function
\begin{equation}\label{eq:RandomSeries}
S(t) = \sum_{n=1}^\infty X_n \frac{e^{2\pi i n^2 t}}{n^2}
\end{equation}
has\footnote{
\cite[p.86, Theorem 2]{Kahane1985}  shows that almost surely $\alpha_S(t) \geq 3/4$ for all $t$, and
and \cite[p.104, Theorem 5]{Kahane1985} shows that almost surely $\alpha_S(t) \leq 3/4$ for all $t$. 
}  almost surely $\alpha_S(t) = 3/4$ for all $t \in \mathbb R$ \cite{Kahane1985}. 
Hence the generic behavior of \eqref{eq:RandomSeries} is monofractal. 
In contrast, the fine structure of the linear phase $nx_0$ of $R_{x_0}$ causes a multifractal behavior.  
\end{rem}

Regarding intermittency, we compute the $L^p$ norms of the Fourier high-pass filters of $R_{x_0}$
and the intermittency exponents $\eta_{x_0}(p)$ 
when $x_0 \in \mathbb Q$,
 from which we deduce that $R_{x_0}$ is intermittent. 
\begin{thm}\label{thm:Main_Theorem_Rationals_Intermittency}
Let $x_0 \in \mathbb Q$. Let $1 < p < \infty$. Then, 
\begin{equation}\label{eq:Main_Theorem_Rationals_Lp}
\big\lVert  P_{\geq N} R_{x_0} \big\rVert_p^p \simeq 
\left\{  \begin{array}{ll}
N^{-\frac{p}{2} - 1}, & p > 4, \\
N^{-3} \, \log N, & p = 4, \\
N^{-3p/4}, & p < 4, 
\end{array}
\right.
\end{equation}
and therefore 
\begin{equation}
\eta_{x_0}(p)
= \lim_{N \to \infty} \frac{\log (\lVert  P_{\geq N} f \rVert_p^p ) }{\log (1/N)}
= \left\{ \begin{array}{ll}
p/2 + 1, & p > 4, \\
3p/4, & p \leq 4.
\end{array}
\right.
\end{equation}
Consequently, $\lim_{N \to \infty} F_p(N) = +\infty$ for $p \geq 4$. 
In particular, $R_{x_0}$ is intermittent. 
\end{thm}

\begin{rem} \hfill
\begin{enumerate} 
\item[(a)] The $p=4$ intermittency exponent in \eqref{eq:Main_Theorem_Rationals_Lp} is $\eta(4) - 2\eta(2) = 0$, 
but the fact that $\lVert  P_{\geq N} R_{x_0}\rVert_4^4$ does not follow a pure power law makes
$F_4(N) \simeq \log N$. 
For $p > 4$, we have $\eta(p) - p\eta(2)/2  = 1 -  p/4 < 0$, 
so $R_{x_0}$ is intermittent in small scales when $x_0 \in \mathbb Q$. 

\item[(b)] The upper bound in  \eqref{eq:Main_Theorem_Rationals_Lp} in Theorem~\ref{thm:Main_Theorem_Rationals_Intermittency} holds for all $x_0 \in [0,1]$. The theorem shows that  this is optimal when $x_0 \in \mathbb Q$, but we do not expect it to be optimal when $x_0 \not\in \mathbb Q$.
	We suspect that the exact behavior, and hence $\eta_{x_0}(p)$, depends on the irrationality of $x_0$. 
	We aim to study this question in a future work.
	
\end{enumerate}
\end{rem}

\begin{figure}[h]
\includegraphics[width=0.55\linewidth]{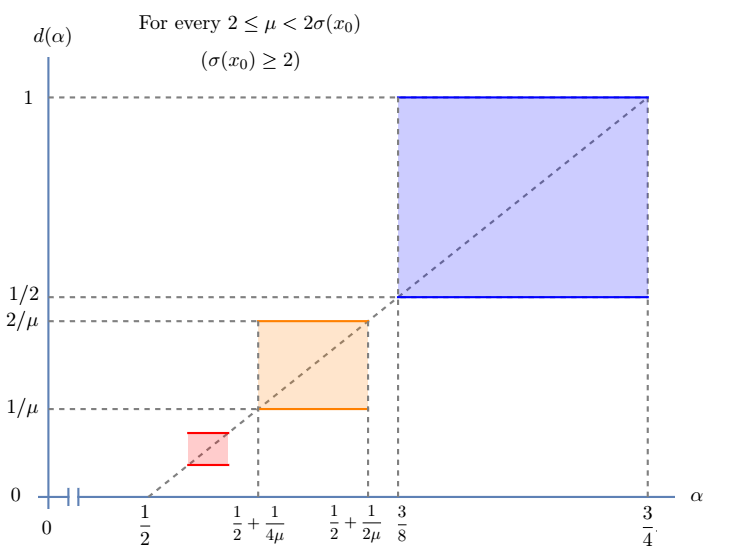}
\caption{A graphic representation of Theorem~\ref{thm:Main_Theorem_Irrationals}. 
We have a continuum of Whitney-type boxes parametrized by $\mu$ along the dashed diagonal line $d(\alpha) = 4\alpha - 2$. 
The graph of $d_{x_0}(\alpha)$ has at least a point in each of those boxes. }
\label{fig:Thm1.3}
\end{figure}

\subsection{Related literature on the analytic study of Riemann's non-differentiable function}\label{sec:AnalyticLiterature}
Beyond the literature for the original Riemann's function $R_0$, 
the closest work to the study of $R_{x_0}$ is by Oskolkov and Chakhkiev \cite{OskolkovChakhkiev2010}. 
They studied the regularity of $R_{x_0}(t)$ 
almost everywhere as a function of two variables $(x_0,t)$,
which is not fine enough to capture multifractal properties.   

Alternatively, 
there are many works studying $R_{x_0}(t)$ as a function of $x_0$ with $t$ fixed,
motivated by the fact that $R_{x_0}$ is the solution to an initial value problem for the periodic free Schr\"odinger equation. 
From this perspective, 
Kapitanski and Rodnianski \cite{KapitanskiRodnianski1999} studied the Besov regularity of the fundamental solution\footnote{
Which, up to constants, is either $\partial_t R_{x_0}(t)$ or $\partial_{x_0}^2R_{x_0}(t)$.} as a function of $x$ with $t$ fixed. 
This approach is also intimately related to the Talbot effect in optics which, 
as proposed by Berry and Klein \cite{BerryKlein1996}, 
is approximated by the fundamental solution to the periodic free Schr\"odinger equation. 
Pursuing the related phenomenon of \textit{quantization}\footnote{See the article by Olver \cite{Olver2010} for an instructive account of quantization.},
the geometry of the profiles of Schr\"odinger solutions have been studied for fixed $t$ by Berry \cite{Berry1996} and Rodnianski \cite{Rodnianski2000}.
Following the numeric works of Chen and Olver \cite{ChenOlver2013,ChenOlver2014},
this perspective has also been extended to the nonlinear setting and other dispersive relations 
by Chousonis, Erdogan and Tzirakis \cite{ErdoganTzirakis2013,ChousionisErdoganTzirakis2015} 
and Boulton, Farmakis and Pelloni \cite{BoultonFarmakisPelloni2021,BoultonFarmakisPelloni2023}.

There is a literature for other natural generalizations of Riemann's function, like
\begin{equation}
F(t) = \sum_{n=1}^\infty \frac{e^{2\pi i P(n) t}}{n^\alpha}, 
\qquad P \text{ a polynomial}, 
\qquad \alpha > 1, 
\end{equation}
For $P(n) = n^2$, 
Jaffard \cite{Jaffard1996} gave his results for all $\alpha > 1$. 
Chamizo and C\'ordoba \cite{ChamizoCordoba1999} studied the Minkowski dimension of their graphs. 
Seuret and Ubis \cite{SeuretUbis2017} studied the non-convergent case $\alpha < 1$, 
using a local $L^2$ exponent.  
Chamizo and Ubis \cite{ChamizoUbis2007,ChamizoUbis2014} 
 studied the spectrum of singularities for general polynomials $P$.
 Further generalizations concerning fractional integrals of modular forms were studied by Pastor \cite{Pastor2019}.

\subsection{Structure of the article}\label{sec:Structure}

In Section~\ref{sec:Diophantine_Approximation} we discuss the  general strategy we follow to prove our theorems, stressing the new ideas related to Diophantine sets with restrictions, the Duffin-Schaeffer theorem and the Mass Transference Principle. 
In Section~\ref{sec:Preliminary_Results} we prove 
 preliminary results for the local H\"older regularity of $R_{x_0}$, 
 in particular the behavior around rational points $t$.
In Section~\ref{sec:Rational_Holder} we compute the spectrum of singularities of $R_{x_0}$ when $x_0 \in \mathbb Q$ and prove Theorem~\ref{thm:Main_Theorem_Rationals_Spectrum}. 
In Section~\ref{sec:Irrationals} we prove Theorem~\ref{thm:Main_Theorem_Irrationals}. 
In Section~\ref{sec:Rational_High_Pass_Filters}
we prove Theorem~\ref{thm:Main_Theorem_Rationals_Intermittency} 
by computing the $L^p$ norms of the high-pass filters of $R_{x_0}$. 
The proofs of some auxiliary results are postponed to Appendices~\ref{sec:SumsEulerFunctions} and \ref{sec:Asymptotic_Alternative} to avoid breaking the continuity of the main arguments.

\section{An overview on the general arguments and on Diophantine approximation}\label{sec:Diophantine_Approximation}

\subsection{General argument} 
An important part of the arguments in this article relies on Diophantine approximation.
We will work with both the exponent of irrationality 
\begin{equation}\label{eq:Exponent_Of_Irrationality}
\mu(x) = \sup \Big\{ \, \mu > 0 \, : \Big| x - \frac{p}{q} \Big| \leq \frac{1}{q^\mu} \text{ for infinitely many coprime pairs } (p,q) \in \mathbb N \times \mathbb N \,   \Big\}, 
\end{equation}
and the Lebesgue and Hausdorff measure properties of the related sets 
\begin{equation}\label{eq:A_mu_Intro}
A_\mu = \Big\{ \, x \in \mathbb [0,1] \, \mid \, \Big| x - \frac{p}{q} \Big| \leq \frac{1}{q^\mu} \text{ for infinitely many coprime pairs } (p,q) \in \mathbb N \times \mathbb N \,   \Big\}, 
\end{equation}
where the case $\mu = \infty$ is understood as $A_\infty = \bigcap_{\mu \geq 2} A_\mu$. 
In a somewhat hand-waving way, $\mu(x) = \mu$ 
means that $|x - p/q| \simeq 1/q^\mu$ infinitely often, which ceases to be true for any larger $\mu$. 

With these concepts in hand, 
the classic way to study the regularity of $R_{x_0}$ (used by Duistermaat, Jaffard and subsequent authors) is to first compute the asymptotic behavior of $R_{x_0}$ around rationals. 
Using the Poisson summation formula
we will get a leading order expression of the form
\begin{equation}\label{eq:Heuristic_Asymptotic}
 R_{x_0}\Big( \frac{p}{q} + h \Big) - R_{x_0}\Big( \frac{p}{q}\Big)   \sim \frac{\sqrt{h}}{q}G_q \sim \frac{\sqrt{h}}{\sqrt{q}},
\end{equation}
where  $G_q$ includes a quadratic Gauss sum of period $q$, hence $|G_q| \sim \sqrt q$ whenever it does not cancel.
This shows that in most rationals the regularity of $R_{x_0}$ is 1/2. 
Let now $t\not\in \mathbb Q$ with irrationality exponent $\mu(t) = \mu$. 
Then, essentially $|t - p/q| \simeq 1/q^{\mu}$, 
so choosing $h = t-p/q$ we get 
 \begin{equation}
 R_{x_0}\Big( t \Big) - R_{x_0}\Big( t - h \Big) 
\sim \frac{\sqrt{h}}{\sqrt{q}}
\sim h^{\frac12 + \frac{1}{2\mu}}.
\end{equation}
This suggests that $\alpha_{x_0}(t) = \frac12 + \frac{1}{2\mu}$. 
Combining this with the Jarnik-Besicovitch theorem, which says that $\dim_{\mathcal H } A_\mu = 2/\mu$, we get the desired $d(\alpha) = 4\alpha - 2$ in the range $1/2 \leq \alpha \leq 3/4$. 

This argument is essentially valid up to assuming $G_q \neq 0$ in \eqref{eq:Heuristic_Asymptotic}. 
This, however, does not always hold. 
Apart from a parity condition on $q$ coming from the Gauss sums (present already in previous works),
an additional condition arises that depends on $x_0$. 
For example, if $x_0 = P/Q \in \mathbb Q$, this condition has the form of $Q \mid q$. 
In terms of the sets $A_\mu$, 
this means that we need to restrict the denominators of the approximations to a subset of the natural numbers. 
So let $\mathcal Q \subset \mathbb N$, and define
\begin{equation}\label{eq:A_Mu_Q}
A_{\mu, \mathcal Q}
= \Big\{ \, x \in [0,1] \, : \, \Big| x - \frac{p}{q} \Big| \leq \frac{1}{q^\mu} \text{ for infinitely many coprime pairs } (p,q) \in \mathbb N \times \mathcal Q  \,   \Big\}.
\end{equation}
Clearly $A_{\mu, \mathcal Q} \subset A_\mu$,
but a priori it could be much smaller. 
Does $A_{\mu, \mathcal Q}$ preserve the measure of $A_\mu$?
Previous works need to work with situations analogue to $Q=2$, 
but here we need to argue for all $Q \in \mathbb N$. 
For that, at the level of the Lebesgue measure
we will use the Duffin-Schaeffer theorem, 
while we will compute Hausdorff measures and dimensions 
 via the Mass Transference Principle.

\subsection{Lebesgue measure: Dirichlet approximation and the Duffin-Schaeffer theorem}

Both the Dirichlet approximation theorem 
and the theory of continued fractions
imply $A_2 = \mathbb [0,1] \setminus \mathbb Q$.
However, neither of them give enough information about the sequence of denominators they produce, 
so they cannot be used to determine the size of the set $A_{2,\mathcal Q  }  \subset A_2$.
The recently proved Duffin-Schaeffer conjecture gives an answer to this kind of questions. 
\begin{thm}[Duffin-Schaeffer theorem \cite{KoukoulopoulosMaynard2020}]\label{thm:Duffin_Schaeffer}
Let $\psi: \mathbb N \to [0,\infty)$ be a function. Define
\begin{equation}
A_{\psi}
= \Big\{ \, x \in [0,1] \, : \, \Big| x - \frac{p}{q} \Big| \leq \psi(q) \text{ for infinitely many coprime pairs } (p,q) \in \mathbb N \times \mathbb N  \,   \Big\}.
\end{equation}
Let $\varphi$ denote the Euler totient function\footnote{The Euler totient function:  for $q \in \mathbb N$, $\varphi(q)$ is the number of natural numbers $ i \leq q$ such that $\operatorname{gcd}(q,i)=1$.}. 
Then, we have the following dichotomy: 
\begin{enumerate}
	\item If $\sum_{q = 1}^\infty \varphi(q) \psi(q) = \infty$, then $|A_\psi| = 1$.  
	
	\item If $\sum_{q = 1}^\infty \varphi(q) \psi(q) < \infty$, then $|A_\psi| = 0$.
\end{enumerate}
\end{thm}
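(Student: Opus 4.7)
The statement splits naturally into an easy convergence half and a hard divergence half, so I will treat them separately, spending most of my effort on the divergence direction since it is the content of Koukoulopoulos--Maynard.

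For the convergence direction, the plan is a direct Borel--Cantelli argument. For each $q \in \mathbb N$ let
\begin{equation}
E_q = \bigcup_{\substack{0 \leq p \leq q \\ (p,q)=1}} \bigl[\tfrac{p}{q} - \psi(q),\, \tfrac{p}{q} + \psi(q)\bigr] \cap [0,1],
\end{equation}
so that $A_\psi = \limsup_{q \to \infty} E_q$. The measure satisfies $|E_q| \leq 2 \varphi(q) \psi(q)$, and the hypothesis $\sum_q \varphi(q) \psi(q) < \infty$ then yields $\sum_q |E_q| < \infty$, whence $|A_\psi| = 0$ by the first Borel--Cantelli lemma. This part is routine.

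For the divergence direction, the aim is to show $|A_\psi|=1$. After standard reductions I would first normalize $\psi$ so that $\psi(q) \leq 1/(2q)$ (tails that violate this only help us), and then try to establish a \emph{quasi-independence} bound of the form
\begin{equation}
\sum_{q,r \leq Q} |E_q \cap E_r| \lesssim \Bigl( \sum_{q \leq Q} |E_q| \Bigr)^2
\end{equation}
for all large $Q$. Combined with the divergence assumption and a standard second-moment / Paley--Zygmund inequality, this upgrades $|\limsup E_q| > 0$ to full measure via a zero-one law (Gallagher's theorem). The crux is therefore a uniform overlap estimate for pairs of circles of ``admissible'' fractions with different denominators.

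The main obstacle, and the genuine heart of the theorem, is proving the quasi-independence estimate. The Koukoulopoulos--Maynard strategy I would follow builds a \emph{GCD graph} whose vertices are pairs $(p,q)$ with $p/q$ an approximable fraction and whose edges encode anomalously large common factors between denominators; one then shows that if quasi-independence fails on a large subset, the GCD graph must contain a dense ``iterated neighborhood'' structure that contradicts an entropy/compression estimate controlling how many $(p,q)$ can simultaneously enjoy many large gcds with others. Constructing this graph, iteratively refining vertex sets by conditioning on prime factors in dyadic ranges, and closing the induction is the part I expect to be by far the most technically demanding --- and is indeed what took the field eighty years to settle. For the purposes of the present paper I would simply cite \cite{KoukoulopoulosMaynard2020} at this point and proceed, since the applications we need only use the theorem as a black box.
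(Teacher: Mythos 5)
Your proposal matches the paper's treatment exactly: the paper likewise dispatches part (b) with the canonical limsup covering (equation \eqref{eq:Canonical_Cover_Limsup}) via a Borel--Cantelli argument, and for part (a) it simply cites \cite{KoukoulopoulosMaynard2020} without reproducing the proof. Your sketch of the GCD-graph and quasi-independence machinery is accurate background, but since both you and the authors ultimately invoke the reference as a black box for the divergence direction, there is no substantive difference.
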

The relevant part of this theorem is $(a)$, since $(b)$ follows from the canonical limsup covering 
\begin{equation}\label{eq:Canonical_Cover_Limsup}
A_\psi \subset  \bigcup_{ q = Q }^\infty  \bigcup_{\substack{ 1 \leq p  \leq q \\ \, (p,q) = 1 }} \Big( \frac{p}{q} - \psi(q), \, \frac{p}{q} + \psi(q) \Big), \quad \forall \,  Q \in \mathbb N
\quad \Longrightarrow \quad 
|A_\psi| \leq \sum_{q = Q}^\infty \varphi(q) \psi(q), \quad \forall \, Q \in \mathbb N. 
\end{equation}
On the other hand, 
as opposed to the classic theorem by Khinchin\footnote{Khinchin's theorem states that if $\psi: \mathbb N \to [0,\infty)$ is a function such that $q^2\psi(q)$ is decreasing and $\sum_{q=1}^\infty q\,\psi(q) = \infty$, then the set $\{ \, x \in [0,1] \, : \, |x - p/q| \leq \psi(q) \text{ for infinitely many pairs } (p,q) \in \mathbb N \times \mathbb N \,  \}$ has Lebesgue measure 1. } \cite[Theorem 32]{Khinchin1964},
the arbitrariness of $\psi$ allows to restrict the denominators to a set $\mathcal Q \subset \mathbb N$ just by setting $\psi(q) = 0$ when $q \not\in \mathcal Q$. 
In particular, $A_{\mu,\mathcal Q} = A_\psi$ if we define $\psi(q) = \mathbbm{1}_{\mathcal Q}(q) / q^\mu$, where $\mathbbm{1}_{\mathcal Q}$ is the indicator function of the set $\mathcal Q$. 
Hence, the relevant sum for the sets $A_{\mu,\mathcal Q}$ is
\begin{equation}\label{eq:Our_Relevant_Sum}
\sum_{q=1}^\infty \varphi(q) \psi(q) = \sum_{q \in \mathcal Q} \frac{\varphi(q)}{q^\mu}.
\end{equation}
In particular, it is fundamental to understand the behavior of the Euler totient function $\varphi$ on $\mathcal Q$. 

The complete proof of the Duffin-Schaeffer theorem was given recently by Koukoulopoulos and Maynard 
\cite[Theorem 1]{KoukoulopoulosMaynard2020}, 
but Duffin and Schaeffer \cite{DuffinSchaeffer1941} proved back in 1941 that the result holds under the additional assumption that there exists $c > 0$ such that
\begin{equation}\label{eq:Duffin_Schaeffer_Condition}
\sum_{q=1}^N \varphi(q) \, \psi(q) \geq c \sum_{q=1}^N q \, \psi(q), \qquad \text{ for infinitely many } N \in \mathbb N. 
\end{equation} 
In the setting of $A_{\mu,\mathcal Q}$, 
this condition is immediately satisfied by sets $\mathcal Q$ for which there is a $c > 0$ such that $\varphi(q) > c \,  q$ for all $q \in \mathcal Q$. Examples of this are:
\begin{itemize}
	\item $\mathcal Q = \mathbb P$ the set of prime numbers, and
	\item $\mathcal Q = \{ \, M^n \, : \, n \in \mathbb N \,   \}$ where $M \in \mathbb N$, that is, the set of power of a given number $M$. 
\end{itemize} 
It follows from our computations in Appendix~\ref{sec:SumsEulerFunctions} that the condition \eqref{eq:Duffin_Schaeffer_Condition} is also satisfied by 
\begin{itemize}
	\item $\mathcal Q = \{ \, Mn \, : \, n \in \mathbb N \,   \}$ where $M \in \mathbb N$, that is,  the set of multiples of a given number $M$. 
\end{itemize}
To prove Theorem~\ref{thm:Main_Theorem_Rationals_Spectrum} for $x_0 = P/Q$, we restrict the denominators to the latter set with $M = 4Q$; in particular, the 1941 result by Duffin and Schaeffer  \cite{DuffinSchaeffer1941} suffices.  
However, in the case of $x_0 \not\in \mathbb Q$ we need to restrict the denominators to an exponentially growing sequence $q_n$ for which we do not know if \eqref{eq:Duffin_Schaeffer_Condition} holds. 
Hence, in this case we need the full power of the result by Koukoulopoulos and Maynard \cite{KoukoulopoulosMaynard2020}. 
This might give an indication of the difficulty to settle the case $x_0 \not\in \mathbb Q$.

\subsection{Hausdorff dimension: the Jarn\'ik-Besicovitch theorem and the Mass Transference Principle}

We mentioned that 
$A_2 = [0,1] \setminus \mathbb Q$, 
and it follows from the argument in \eqref{eq:Canonical_Cover_Limsup}
 that $|A_\mu| = 0$ for $\mu > 2$. 
But how small is $A_\mu$ is when $\mu > 2$?
A measure theoretic answer to that is the Jarn\'ik and Besicovitch theorem from the 1930s (see \cite[Section 10.3]{Falconer2014} for a modern version).
\begin{thm}[Jarn\'ik-Besicovitch theorem]\label{thm:Jarnik_Besicovitch}
Let $\mu > 2$ and let $A_\mu$ be defined as in \eqref{eq:A_mu_Intro}. 
Then, $\operatorname{dim}_{\mathcal H} A_\mu = 2/ \mu$ and $\mathcal H^{2/\mu}(A_\mu) = \infty$. 
\end{thm}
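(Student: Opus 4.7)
The plan is to prove the two halves—the upper bound $\dim_{\mathcal H} A_\mu \leq 2/\mu$ and the lower statement $\mathcal H^{2/\mu}(A_\mu) = \infty$ (which forces $\dim_{\mathcal H} A_\mu \geq 2/\mu$)—separately, following what is nowadays the standard route: canonical covers for the upper bound, and the Mass Transference Principle for the lower one.

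For the upper bound I would exploit directly the limsup covering written in \eqref{eq:Canonical_Cover_Limsup}. Fix $s > 2/\mu$ and $\delta > 0$. Choose $Q$ so large that $2/Q^\mu < \delta$. The intervals $(p/q - 1/q^\mu, p/q + 1/q^\mu)$ with $q \geq Q$ and $(p,q) = 1$ cover $A_\mu$, each has diameter $\leq 2/q^\mu \leq \delta$, and there are $\varphi(q) \leq q$ choices of $p$ for each $q$. Therefore
\begin{equation}
\mathcal H^s_\delta(A_\mu) \leq \sum_{q=Q}^\infty \varphi(q)\Big(\frac{2}{q^\mu}\Big)^s \leq 2^s \sum_{q=Q}^\infty \frac{1}{q^{\mu s - 1}},
\end{equation}
and since $\mu s > 2$ the tail tends to $0$ as $Q \to \infty$. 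Letting $\delta \to 0$ gives $\mathcal H^s(A_\mu) = 0$, so $\dim_{\mathcal H} A_\mu \leq 2/\mu$.

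For the lower bound I would invoke the Mass Transference Principle of Beresnevich and Velani. Dirichlet's approximation theorem (or continued fractions) yields $A_2 = [0,1]\setminus \mathbb Q$, hence $|A_2| = 1$. Now write $A_2$ as the limsup of the balls $B_{p,q} := B(p/q, 1/q^2)$ over coprime pairs, and for $s = 2/\mu \in (0,1]$ define the shrunken balls $B_{p,q}^{\,s} := B(p/q, (1/q^2)^{1/s}) = B(p/q, 1/q^\mu)$, whose limsup is exactly $A_\mu$. Since $|A_2 \cap J| = |J|$ for every subinterval $J \subset [0,1]$, the Mass Transference Principle (see e.g.\ Beresnevich--Velani's original formulation) gives $\mathcal H^{2/\mu}(A_\mu \cap J) = \mathcal H^{2/\mu}(J)$ for every such $J$. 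Taking $J = [0,1]$ yields $\mathcal H^{2/\mu}(A_\mu) = \mathcal H^{2/\mu}([0,1]) = \infty$, which simultaneously delivers $\dim_{\mathcal H} A_\mu \geq 2/\mu$ and the divergence statement.

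The only genuinely delicate point is the invocation of the Mass Transference Principle: one must confirm that the inclusion $A_2 \supset \limsup B_{p,q}$ is in fact the ball-generated limsup to which the principle applies, and that restricting to coprime pairs does not damage the full-measure hypothesis—both of which are immediate from Dirichlet's theorem, which produces infinitely many coprime $(p,q)$ with $|x - p/q| < 1/q^2$ for every irrational $x$. Everything else is a routine bookkeeping of constants. Incidentally, this same scheme will be the engine behind the refinement to $A_{\mu,\mathcal Q}$ used later in the paper, once the Duffin--Schaeffer step supplies the full-measure input adapted to denominators in $\mathcal Q$.
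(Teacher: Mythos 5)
Your proof is correct and follows exactly the route the paper itself sketches: the upper bound $\dim_{\mathcal H} A_\mu \le 2/\mu$ from the canonical limsup cover as in \eqref{eq:Canonical_Cover_Limsup}, and the lower bound together with $\mathcal H^{2/\mu}(A_\mu)=\infty$ by dilating $A_\mu$ (written as the limsup \eqref{eq:A_Mu_Limsup}) to $A_2$, invoking Dirichlet for $|A_2|=1$, and applying the Mass Transference Principle. The only cosmetic difference is that you phrase it as shrinking $A_2$ to $A_\mu$ and quote the original Beresnevich--Velani conclusion $\mathcal H^{2/\mu}(A_\mu\cap J)=\mathcal H^{2/\mu}(J)$, whereas the paper uses its simplified statement of the MTP (Theorem~\ref{thm:Mass_Transference_Principle}) directly; these are equivalent routes.
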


In this article we need to adapt this result to $A_{\mu,\mathcal Q}$. 
First, using the Duffin-Schaeffer Theorem~\ref{thm:Duffin_Schaeffer} we will be able to find the largest $\mu_0 \geq 1$ such that $|A_{\mu_0, \mathcal Q}| = 1$, 
so that $|A_{\mu, \mathcal Q}| = 0$ for all $\mu > \mu_0$. 
To compute the Hausdorff dimension of those zero-measure sets, 
we will use a theorem by Beresnevich and Velani, called the Mass Transference Principle \cite[Theorem 2]{BeresnevichVelani2006}. 
We state here its application to the unit cube and to Hausdorff measures. 
\begin{thm}[Mass Transference Principle \cite{BeresnevichVelani2006}]\label{thm:Mass_Transference_Principle}
Let $B_n = B_n(x_n,r_n)$ be a sequence of balls in $[0,1]^d$ 
such that $\lim_{n \to \infty} r_n = 0$. 
Let $\alpha < d$ and let $B_n^\alpha = B_n(x_n,r_n^{\alpha/d})$ be the dilation of $B_n$ centered at $x_n$ by the exponent $\alpha$. Suppose that $ X^\alpha := \limsup_{n \to \infty} B_n^\alpha $ is of full Lebesgue measure, that is, 
$|X^\alpha| = 1$. 
Then, calling $X := \limsup_{n \to \infty} B_n$, we have $\operatorname{dim}_{\mathcal H} X \geq \alpha$ 
and $\mathcal H^\alpha( X ) = \infty$. 
\end{thm}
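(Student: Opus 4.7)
The plan is to derive a Frostman-type lower bound on $\mathcal H^\alpha(B)$ by constructing, inside every open ball $U \subset [0,1]^d$, a Borel measure $\mu$ supported on $B \cap U$ that is nontrivial and satisfies a scaling bound $\mu(V) \leq C_U\, |V|^\alpha$ for every ball $V$. By the mass distribution principle, such a measure would force $\mathcal H^\alpha(B \cap U) \geq \mu(B \cap U)/C_U > 0$, and since $U$ is arbitrary this promotes to $\mathcal H^\alpha(B) = \infty$; in particular $\operatorname{dim}_{\mathcal H} B \geq \alpha$. So the whole game reduces to building $\mu$.

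The main engine is a Cantor-type construction inside an arbitrary $U$. Inductively, starting from a single ball $J_0 \subset U$ selected with the help of the full-measure hypothesis on $B^\alpha$, one chooses at each stage $k$ a finite family $\{B_n\}_{n \in I_k}$ of original (undilated) balls satisfying: (i) each such $B_n$ is contained in some $J \in I_{k-1}$; (ii) their radii are comparable to a fast-decreasing scale $\rho_k \to 0$; (iii) they are pairwise $c\rho_k$-separated. The existence of such a family at every stage is where the hypothesis $|B^\alpha|=1$ is crucially used: inside each $J \in I_{k-1}$, the set $B^\alpha \cap J$ has full Lebesgue measure in $J$, so at arbitrarily fine scales a positive fraction of $J$ is covered by the dilated balls $B_n^\alpha$ (radius $r_n^\alpha$), and a Vitali-type selection extracts a disjoint subfamily. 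One then records the corresponding smaller balls $B_n$ (of radius $r_n$) as the stage-$k$ family. This is the \emph{transfer} from the dilated balls $B_n^\alpha$ back to the original $B_n$ that gives the principle its name.

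Define $\mu$ by distributing a unit of mass across the stage-$1$ balls in proportion to $r_n^\alpha$ and subdividing recursively at each subsequent stage. The resulting probability measure is supported on $K = \bigcap_k \bigcup_{n \in I_k} B_n \subset B \cap U$. To verify $\mu(V) \leq C_U\, |V|^\alpha$ for a ball $V$ of radius $\rho$, one locates the unique stage $k$ with $\rho_k \leq \rho < \rho_{k-1}$ and splits into cases: if $\rho$ is small compared to $\rho_k$, then $V$ meets at most one stage-$k$ ball and the bound follows from the per-ball mass; if $\rho$ is comparable to or larger than $\rho_k$, the $c\rho_k$-separation controls the number of stage-$k$ balls meeting $V$ by a quantity of order $(\rho/\rho_k)^d$, and the exponent $\alpha$ enters through the bookkeeping that compensates the contraction $r_n^\alpha \mapsto r_n$ at each descent level.

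The hard part is the inductive construction itself: the scales $\rho_k$ must be chosen so that the transfer from dilated radii $r_n^\alpha$ to undilated radii $r_n$ is compatible with the Frostman bound at \emph{every} intermediate scale, not merely the discrete scales $\rho_k$. This forces a quantitative Vitali lemma to be coupled with a precise accounting of the Lebesgue mass of $J \in I_{k-1}$ captured at stage $k$, and it is where the Beresnevich--Velani argument displays its genuine depth. Once this is in place, the stronger conclusion $\mathcal H^\alpha(B) = \infty$ (rather than merely positive) follows by repeating the construction in infinitely many disjoint sub-balls of $U$ and summing the contributions, using that the construction yields a uniform lower bound $\mathcal H^\alpha(B \cap U') \geq c > 0$ whose dependence on $U'$ can be tracked.
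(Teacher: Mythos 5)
This theorem is not proved in the paper at all: it is imported verbatim from Beresnevich and Velani \cite{BeresnevichVelani2006} and used as a black box, so there is no in-paper argument to compare against. What you have written is an outline of the Beresnevich--Velani proof architecture, and as an outline it is accurate: the reduction to a Frostman/mass-distribution bound on every ball $U$, the Cantor-type construction whose stage-$k$ pieces are the \emph{undilated} balls $B_n$ selected because their dilations $B_n^\alpha$ cover a positive fraction of each previous-stage piece, and the deduction of $\mathcal H^\alpha(B)=\infty$ from $\alpha<d$ are all the right moving parts.

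The gap is that you have named the hard step rather than carried it out. The entire content of the theorem lives in the inductive construction you defer in your final paragraph: one must produce, inside each stage-$(k-1)$ ball $J$, a finite disjoint subfamily of undilated balls $B_n$ together with a mass assignment such that the resulting measure satisfies $\mu(V)\lesssim |V|^\alpha$ for \emph{every} ball $V$, not just at the discrete scales $\rho_k$. This is exactly the quantitative covering statement (the ``$K_{G,B}$-lemma'' in Beresnevich--Velani) that converts the Lebesgue full-measure hypothesis on $\limsup B_n^\alpha$ into an $\mathcal H^\alpha$-statement about $\limsup B_n$; your sketch asserts that a ``Vitali-type selection'' plus ``precise accounting'' will do it, but the compensation between the loss of Lebesgue mass in passing from radius $r_n^\alpha$ to radius $r_n$ and the gain in the Frostman exponent is precisely where the proof could fail if done naively, and it is not checked. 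Similarly, the final promotion to $\mathcal H^\alpha(B)=\infty$ by summing over infinitely many disjoint sub-balls requires the lower bound for $\mathcal H^\alpha(B\cap U')$ to scale like $r(U')^\alpha$ (so that the sum over a packing of $U$ diverges because $\alpha<d$); you acknowledge this dependence ``can be tracked'' without tracking it. As it stands the proposal is a correct plan, not a proof; for the purposes of this paper the honest course is what the authors do, namely cite \cite{BeresnevichVelani2006}.
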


To illustrate the power of the Mass Transference Principle, 
let us explain how the Jarnik-Besicovitch Theorem \ref{thm:Jarnik_Besicovitch} follows as a simple corollary of the Dirichlet approximation theorem. 
From the definition of $A_\mu$ we can write\footnote{The expression in \eqref{eq:A_Mu_Limsup} is not in the form of a limsup of balls.
It follows, however, that the limsup of any enumeration whatsoever of the balls considered in the construction gives the same set.}
\begin{equation}\label{eq:A_Mu_Limsup}
A_\mu = \limsup_{q \to \infty} \bigcup_{1 \leq p \leq q, (p,q) = 1} B\Big( \frac{p}{q}, \, \frac{1}{q^\mu} \Big). 
\end{equation}
Choose $\alpha = 2/\mu$ so that $(A_\mu)^\alpha = A_{\mu\alpha} =  A_2$, which by the Dirichlet approximation theorem has full measure. 
Then, the Mass Transference Principle implies
$\operatorname{dim}_{\mathcal H} A_\mu \geq 2/\mu$ and $\mathcal H^{2/\mu} (A_\mu) = \infty$. 
The upper bound follows from the canonical cover of $A_\mu$ in \eqref{eq:A_Mu_Limsup}, 
proceeding like in \eqref{eq:Canonical_Cover_Limsup}.
 
 For $A_{\mu,\mathcal Q}$,  
 once we find the largest $\mu_0$ for which $|A_{\mu_0,\mathcal Q}|=1$ using the Duffin-Schaeffer theorem,
we will choose $\alpha = \mu_0 / \mu$ so that the property $(A_{\mu,\mathcal Q})^\alpha = A_{\mu \alpha, \mathcal Q } = A_{\mu_0, \mathcal Q}$ has full measure, 
and the Mass Transference Principle will then imply $\operatorname{dim}_{\mathcal H} A_{\mu, \mathcal Q} \geq \mu_0/\mu$.


\section{Preliminary results on the local regularity of $R_{x_0}$}
\label{sec:Preliminary_Results}

In this section we carry over to $R_{x_0}$
regularity results that are by now classical for $R_0$. 
In Section \ref{sec:General_Lower_Bound} we prove that
$R_{x_0}$ is globally $C^{1/2}$.
In Section~\ref{sec:Asymptotic_Behavior_Rationals}
we compute the asymptotic behavior of $R_{x_0}$ around rationals. 
In Section~\ref{sec:Lower_Bounds} we give a lower bound for $\alpha_{x_0}(t)$ that is independent of $x_0$.

\subsection{A global H\"older regularity result}\label{sec:General_Lower_Bound}
Duistermaat \cite[Lemma 4.1.]{Duistermaat1991} proved that $R_0$ is globally $C^{1/2}(t)$. 
The same holds for all $x_0 \in \mathbb R$. 
We include the proof for completeness.
\begin{prop}\label{thm:DuistermaatLowerBound}
Let $x_0 \in \mathbb R$. Then, $\alpha_{x_0}(t) \geq 1/2$ for all $t \in\mathbb R$. That is, $R_{x_0}$ is globally $C^{1/2}$.
\end{prop}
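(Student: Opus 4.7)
The plan is to adapt Duistermaat's dyadic-split argument, exploiting the fact that the extra phase $e^{2\pi i n x_0}$ is a pure unimodular factor and therefore invisible to the triangle inequality; the whole proof is uniform in $x_0$.

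First I would write the increment as a single Fourier series,
\begin{equation}
R_{x_0}(t+h) - R_{x_0}(t) = \sum_{n \neq 0} \frac{e^{2\pi i (n^2 t + n x_0)}}{n^2}\bigl(e^{2\pi i n^2 h} - 1\bigr),
\end{equation}
and split the sum at the natural threshold $N := \lfloor |h|^{-1/2} \rfloor$, where the two available estimates of $|e^{2\pi i n^2 h} - 1|$ cross over.

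For the low-frequency piece $0 < |n| \le N$, I would use the linearization bound $|e^{2\pi i n^2 h} - 1| \le 2\pi n^2 |h|$, which after telescoping against the factor $1/n^2$ leaves
\begin{equation}
\sum_{0 < |n| \le N} \frac{|e^{2\pi i n^2 h} - 1|}{n^2} \le 4\pi N |h| \lesssim |h|^{1/2}.
\end{equation}
For the high-frequency piece $|n| > N$, I would discard the oscillating factor via $|e^{2\pi i n^2 h} - 1| \le 2$ and use tail convergence of $\sum 1/n^2$ to get
\begin{equation}
\sum_{|n| > N} \frac{|e^{2\pi i n^2 h} - 1|}{n^2} \lesssim \frac{1}{N} \lesssim |h|^{1/2}.
\end{equation}
Combining the two pieces by the triangle inequality yields $|R_{x_0}(t+h) - R_{x_0}(t)| \lesssim |h|^{1/2}$ with a constant independent of $t$ and $x_0$, which is precisely the definition of $\mathcal{C}^{1/2}(t)$ with the constant polynomial $P_t \equiv R_{x_0}(t)$.

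There is no real obstacle here: the point is simply that bounding in absolute value kills the $x_0$-dependent phase, so the argument reduces to Duistermaat's original one. The only care needed is checking that the implicit constants in the two estimates above can be chosen independently of $x_0$, which is immediate since the phase factors are bounded by $1$ before any estimate is performed.
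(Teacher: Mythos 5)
Your proof is correct and follows essentially the same dyadic split at $N \simeq |h|^{-1/2}$ that the paper uses, with the same two bounds (linearization for the low-frequency piece, trivial bound plus tail decay for the high-frequency piece) and the same observation that the phase $e^{2\pi i n x_0}$ is killed by the triangle inequality, making the argument uniform in $x_0$. The only cosmetic difference is that the paper defines $N$ via $(N+1)^{-2} \le |h| < N^{-2}$ rather than $N = \lfloor |h|^{-1/2} \rfloor$.
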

\begin{proof}
For $h \neq 0$, let $N \in \mathbb N$ such that $\frac{1}{(N+1)^2} \leq  |h| < \frac{1}{N^2}  $, 
and write
\begin{equation}
R_{x_0}(t+h) - R_{x_0}(t) 
= \sum_{|n| \leq N} \frac{e^{2\pi i n^2 t}\, e^{2\pi i n x_0} }{n^2} \Big( e^{2\pi i n^2 h} - 1 \Big)
+ \sum_{|n| > N} \frac{e^{2\pi i n^2 t}\, e^{2\pi i n x_0} }{n^2} \Big( e^{2\pi i n^2 h} - 1 \Big).
\end{equation}
Since $|e^{ix} - 1| \leq |x|$ for all $x \in \mathbb R$, we bound
\begin{equation}
\Big| \sum_{|n| \leq N} \frac{e^{2\pi i n^2 t}\, e^{2\pi i n x_0} }{n^2} \Big( e^{2\pi i n^2 h} - 1 \Big) \Big|
\leq  \sum_{|n| \leq N} \frac{ \big| e^{2\pi i n^2 h} - 1 \big|}{n^2}
\leq 2|h|N 
< 2 |h| \frac{1}{\sqrt{|h|}} = 2 \sqrt{|h|}. 
\end{equation}
For the other sum, we trivially bound $\big| e^{2\pi i n^2 h } -  1 \big| \leq 2$ to get
\begin{equation}
\Big|  \sum_{|n| > N} \frac{e^{2\pi i n^2 t}\, e^{2\pi i n x_0} }{n^2} \Big( e^{2\pi i n^2 h} - 1 \Big)  \Big|
\leq  2\, \sum_{n=N+1}^\infty \frac{2}{n^2} 
\leq \frac{4}{N} 
\leq \frac{8}{N+1}
\leq 8 \sqrt{|h|}. 
\end{equation}
Hence $\big| R_{x_0}(t+h) - R_{x_0}(t) \big| \leq 10  |h|^{1/2}$. 
This holds for all $t$, so $R_{x_0} \in C^{1/2}(t)$ for all $t \in \mathbb R$. 
\end{proof}

\subsection{Asymptotic behavior of $R_{x_0}$ around rational $t$}\label{sec:Asymptotic_Behavior_Rationals}
The building block for all results in this article is the behavior of $R_{x_0}$ around rationals, which we compute explicitly. 
\begin{prop}\label{thm:AsymototicRationals}
Let $x_0 \in \mathbb R$.
Let $p,q \in \mathbb N$ be such that $(p,q)=1$. Then, 
\begin{equation}
R_{x_0}\left( \frac{p}{q} + h  \right)  - R_{x_0}\left( \frac{p}{q} \right) =  - 2\pi i h + \frac{\sqrt{|h|}}{q} \,  \sum_{m \in \mathbb Z} G(p,m,q)  \, F_\pm \left( \frac{x_0 - m/q}{\sqrt{h}} \right), 
\qquad \text{ for } \,  h \neq 0, 
\end{equation}
where $F_\pm = F_+$ if $h > 0$ and $F_\pm = F_-$ if $h < 0$, and
\begin{equation}
G(p,m,q) = \sum_{r=0}^{q-1} e^{2\pi i \frac{pr^2 + mr}{q}}, \qquad 
F_\pm(\xi) = \int_{\mathbb R} \frac{e^{\pm 2\pi i x^2} - 1}{x^2} \, e^{2\pi i x \xi} \, dx.
\end{equation}
The function $F_\pm$ is bounded and continuous, 
$F_\pm(0) = 2\pi (-1 \pm i)$, 
and 
\begin{equation}
 F_\pm(\xi) =  (1 \pm i) \, \frac{e^{\mp \pi i \xi^2/2}}{\xi^2} + O\left( \frac{1}{\xi^4} \right)  = O\left( \frac{1}{\xi^2} \right), \qquad \text{ as } \quad  \xi \to \infty.  
\end{equation}
\end{prop}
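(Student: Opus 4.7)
The plan is to split the sum defining $R_{x_0}(p/q + h) - R_{x_0}(p/q)$ into arithmetic progressions modulo $q$ and apply Poisson summation on each. The residue modulo $q$ will produce the Gauss sums $G(p,m,q)$, while the Poisson dual of the Schr\"odinger-type kernel will produce $F_\pm$.

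More concretely, I would start from
$R_{x_0}(p/q + h) - R_{x_0}(p/q) = \sum_{n \neq 0} \frac{e^{2\pi i(n^2 p/q + n x_0)}}{n^2}(e^{2\pi i n^2 h} - 1)$
and decompose $n = r + mq$ with $r \in \{0, \ldots, q-1\}$ and $m \in \mathbb Z$. The identity $n^2 p/q \equiv r^2 p/q \pmod{1}$ makes $e^{2\pi i n^2 p/q}$ depend only on $r$, while the $x_0$-phase splits as $e^{2\pi i r x_0}\, e^{2\pi i m q x_0}$. Defining $f(y) := \frac{e^{2\pi i y x_0}}{y^2}(e^{2\pi i y^2 h} - 1)$ (extended continuously by $f(0) = 2\pi i h$), one arrives at
$R_{x_0}(p/q + h) - R_{x_0}(p/q) = -2\pi i h + \sum_{r=0}^{q-1} e^{2\pi i r^2 p/q}\sum_{m \in \mathbb Z} f(r + mq)$,
where the $-2\pi i h$ compensates for the $r = m = 0$ term that is absent from the original sum.

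Next, I would apply the Poisson summation formula on each lattice: $\sum_m f(r + mq) = \frac{1}{q}\sum_{k \in \mathbb Z} \widehat f(k/q)\, e^{2\pi i rk/q}$. A routine change of variables $y = \tau/\sqrt{|h|}$ in the defining integral gives $\widehat f(\xi) = \sqrt{|h|}\, F_\pm\bigl((x_0 - \xi)/\sqrt{|h|}\bigr)$, with $\pm$ matching the sign of $h$. Swapping the $r$ and $k$ summations produces the inner sum $\sum_{r=0}^{q-1} e^{2\pi i(pr^2 + kr)/q} = G(p, k, q)$, and after renaming $k \to m$ the stated identity falls out.

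For the properties of $F_\pm$, boundedness and continuity follow from the uniform bound $|e^{\pm 2\pi ix^2} - 1|/x^2 \leq \min(2\pi, 2/x^2) \in L^1(\mathbb R)$ via dominated convergence, and $F_\pm(0) = 2\pi(-1 \pm i)$ reduces to classical Fresnel integrals after one integration by parts. For the asymptotic as $\xi \to \infty$, I would use the identity $(e^{\pm 2\pi i x^2} - 1)/x^2 = \pm 2\pi i \int_0^1 e^{\pm 2\pi i s x^2}\, ds$ to rewrite $F_\pm(\xi)$ as a double integral, complete the square in $x$ to evaluate the inner Gaussian explicitly, and then analyze the remaining integral $\int_0^1 e^{\mp \pi i \xi^2/(2s)}/\sqrt{s}\, ds$ via the substitution $u = \pi \xi^2/(2s)$ followed by integration by parts. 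This last step is the main obstacle: extracting the precise leading constant $(1 \pm i)$, the phase $e^{\mp \pi i \xi^2/2}$, and the $O(\xi^{-4})$ remainder rather than merely $O(\xi^{-2})$ requires a careful two-step integration by parts, whereas the Poisson summation itself is routine given the $1/y^2$ decay of both $f$ and $\widehat f$.
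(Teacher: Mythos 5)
Your proposal follows essentially the same route as the paper: complete the sum to $n\in\mathbb Z$ (generating the $-2\pi ih$ term), split the sum modulo $q$ into residue classes $n = r + mq$, apply Poisson summation in $m$ (the paper writes this as standard Poisson applied to $f(y) = \frac{e^{2\pi i(yq+r)^2 h}-1}{(yq+r)^2}e^{2\pi i(yq+r)x_0}$, you phrase it as Poisson over the shifted lattice $r+q\mathbb Z$; these are the same computation), scale to produce $\widehat f(\xi) = \frac{\sqrt{|h|}}{q}e^{2\pi ir\xi/q}F_\pm\bigl(\frac{x_0-\xi/q}{\sqrt{|h|}}\bigr)$, and swap sums to assemble the Gauss sums $G(p,m,q)$. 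Your sketch of the $F_\pm$ asymptotics via $\frac{e^{\pm 2\pi ix^2}-1}{x^2} = \pm 2\pi i\int_0^1 e^{\pm 2\pi i tx^2}\,dt$, completion of the square, and two-step integration by parts is a valid realization of what the paper calls "classical," and correctly identifies where the work lies.
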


\begin{proof}
We follow the classical approach, 
which can be traced back to Smith \cite{Smith1972},
of using the Poisson summation formula. 
From the definition of $R_{x_0}$, 
complete first the sum to $n \in \mathbb Z$ to write
\begin{equation}
R_{x_0}\left( \frac{p}{q} + h  \right)  - R_{x_0}\left( \frac{p}{q} \right) 
=  - 2\pi i h +  \sum_{n \in \mathbb Z} \frac{e^{2\pi i n^2 h} - 1}{n^2} \,  e^{2\pi i \frac{pn^2}{q} } \, e^{2\pi i n x_0}, 
\end{equation} 
where we must interpret the term $n=0$ as the value of $\frac{e^{2\pi i n^2 h} - 1}{n^2} \simeq 2\pi i h$ as $n \to 0$. 
Split the sum modulo $q$ by writing $n = mq+r$ and
\begin{equation}\label{eq:PSF1}
\sum_{n \in \mathbb Z} \frac{e^{2\pi i n^2 h} - 1}{n^2} \,  e^{2\pi i \frac{pn^2}{q} } \, e^{2\pi i n x_0}
= \sum_{r=0}^{q-1}  e^{2\pi i \frac{p r^2}{q} } \, \sum_{m \in \mathbb Z} \frac{e^{2\pi i (mq+r)^2 h} - 1}{(mq+r)^2} \, e^{2\pi i ( mq + r) x_0}. 
\end{equation}
Use the Poisson summation formula for the function
\begin{equation}
f(y) = \frac{e^{2\pi i (yq+r)^2 h} - 1}{(yq+r)^2} \, e^{2\pi i (yq + r) x_0}, 
\end{equation}
for which, changing variables $(yq+r) \sqrt{|h|} = z$, we have
\begin{equation}
\widehat{f}(\xi) 
 = \frac{\sqrt{|h|}}{q} \, e^{2\pi i r \xi/q  } \, \int \frac{e^{2\pi i \operatorname{sgn}(h) z^2}-1}{z^2} \, e^{2\pi i \frac{z}{\sqrt{|h|}} (x_0 - \xi/q ) }\, dz 
 =  \frac{\sqrt{|h|}}{q} \, e^{2\pi i r  \xi/q  } \, F_\pm \Big( \frac{x_0 - \xi/q}{\sqrt{|h|}} \Big).
\end{equation}
Therefore, 
\begin{equation}
\eqref{eq:PSF1}  = \sum_{r=0}^{q-1} e^{2\pi i \frac{p r^2}{q} } \, \sum_{m \in \mathbb Z} \frac{\sqrt{|h|}}{q} \, e^{2\pi i r m/q  } \, F_\pm \Big( \frac{x_0 - m/q}{\sqrt{|h|}} \Big) \\
=  \frac{\sqrt{|h|}}{q} \,  \sum_{m \in \mathbb Z} G(p,m,q)  \, F_\pm \Big( \frac{x_0 - m/q}{\sqrt{|h|}} \Big).
\end{equation}
The properties for $F_\pm$ 
follow by integration by parts and the value of the Fresnel integral. 
\end{proof}

The main term in Proposition~\ref{thm:AsymototicRationals}
corresponds to $m \in \mathbb Z$ such that $x_0 - m/q$ is closest to 0. 
Define
\begin{equation}\label{eq:x_q_AND_m_q}
\left\{
\begin{array}{l}
m_q = \operatorname{argmin}_{m \in \mathbb Z} \big |x_0 - \frac{m}{q} \big|, \\
x_q = x_0 - \frac{m_q}{q},
\end{array}
\right.
\qquad \text{ so that } \qquad 
|x_q| 
= \Big| x_0 - \frac{m_q}{q} \Big| 
= \operatorname{dist} \Big( x_0, \frac{\mathbb Z}{q} \Big) 
\leq \frac{1}{2q}.
\end{equation}
Then, shifting the sum, 
\begin{equation}
R_{x_0}\Big( \frac{p}{q} + h  \Big)  - R_{x_0}\Big( \frac{p}{q} \Big)  =   - 2\pi i h + \frac{\sqrt{|h|}}{q} \,  G(p,m_q,q) F_\pm\Big( \frac{x_q}{\sqrt{|h|}} \Big)  
 + \frac{\sqrt{|h|}}{q} \, \sum_{m \neq 0} G(p,m_q + m, q) \, F_\pm \Big( \frac{x_q - m/q}{\sqrt{|h|}} \Big). 
\end{equation}
Let us now bound the sum as an error term. 
As long as $(p,q) = 1$, it is a well-known property of Gauss sums that $|G(p,m,q)| \leq \sqrt{2q}$ for all $m \in \mathbb N$, so 
\begin{equation}
\frac{\sqrt{|h|}}{q} \Big|  \sum_{m \neq 0} G(p,m_q + m, q) \, F_\pm \Big( \frac{x_q - m/q}{\sqrt{|h|}} \Big)  \Big|
 \leq 
 2\, \frac{\sqrt{|h|}}{\sqrt{q}} \, \sum_{m \neq 0} \Big|  F_\pm \Big( \frac{x_q - m/q}{\sqrt{|h|}} \Big) \Big|. 
\end{equation}
Since $|x_q| \leq 1/(2q)$ and $m \neq 0$, we have $|x_q - m/q| \simeq |m|/q$.
This suggests separating two cases:
\begin{itemize}
	\item If $q \sqrt{|h|} < 1$, we use the property $F_\pm(x) = O(x^{-2})$ to bound
\begin{equation}
\sum_{m \neq 0} \Big|  F_\pm\Big( \frac{x_q - m/q}{\sqrt{|h|}} \Big) \Big| 
 \lesssim   \sum_{m \neq 0} \frac{|h|}{|x_q - m/q|^2}
 \simeq q^2 \, |h|\, \sum_{m \neq 0} \frac{1}{m^2}
 \simeq  q^2 |h|. 
\end{equation}	

\item If $q \sqrt{|h|} \geq 1$, we split the sum as 
	\begin{equation}
\begin{split}
\sum_{m \neq 0} \Big|  F_\pm \Big( \frac{x_q - m/q}{\sqrt{|h|}} \Big) \Big| 
& = \sum_{|m| \leq q\sqrt{|h|}} \Big|  F_\pm \Big( \frac{x_q - m/q}{\sqrt{|h|}} \Big) \Big| + \sum_{|m| \geq q\sqrt{|h|}} \Big|  F_\pm \Big( \frac{x_q - m/q}{\sqrt{|h|}} \Big) \Big| \\
& \leq \sum_{|m| \leq q\sqrt{|h|}} C + \sum_{|m| \geq q\sqrt{|h|}} \frac{|h|}{|x_q - m/q|^2} \\
& \simeq q \sqrt{|h|} + q^2 |h| \sum_{|m| \geq q\sqrt{|h|}} \frac{1}{m^2}  \simeq  q \, \sqrt{|h|}. 
\end{split}
\end{equation}
\end{itemize} 
These two bounds can be written simultaneously as
\begin{equation}
\sum_{m \neq 0} \Big|  F_\pm \Big( \frac{x_q - m/q}{\sqrt{|h|}} \Big) \Big|  \lesssim \min \big(  q \, \sqrt{|h|},  q^2 |h| \big),
\end{equation}
where the underlying constant is universal. 
Multiply by $\sqrt{|h|} / \sqrt{q}$ to get the following corollary.
\begin{cor}\label{thm:CorollaryAsymptotic}
Let $x_0 \in \mathbb R$.
Let $p,q \in \mathbb N$ be such that $(p,q)=1$. Then, 
\begin{equation}
R_{x_0}\Big( \frac{p}{q} + h  \Big)  - R_{x_0}\Big( \frac{p}{q} \Big)  
= - 2\pi i h + \frac{\sqrt{|h|}}{q} \,  G(p,m_q,q) F_\pm \Big( \frac{x_q}{\sqrt{|h|}} \Big)  + O\left( \min \left( \sqrt{q} \, h , q^{3/2}\, h^{3/2}  \right) \right),
\end{equation}
where the underlying constant of the $O$ is independent of $p, q$ and $x_0$.  
\end{cor}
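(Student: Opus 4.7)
The plan is to extract the main term from the infinite sum in Proposition~\ref{thm:AsymototicRationals} by isolating the index $m = m_q$ for which $x_0 - m/q$ is closest to zero, and then to bound all remaining terms as an error. First I would define $m_q = \operatorname{argmin}_{m \in \mathbb{Z}} |x_0 - m/q|$ and $x_q = x_0 - m_q/q$, observing that $|x_q| \leq 1/(2q)$. Shifting the summation index in Proposition~\ref{thm:AsymototicRationals} by $m \mapsto m + m_q$ peels off the term indexed by $m = 0$, which yields precisely the main term $\sqrt{|h|}\,G(p,m_q,q)\,F_\pm(x_q/\sqrt{|h|})/q$, and leaves a tail indexed by $m \neq 0$ that I need to show is $O(\min(\sqrt{q}\,|h|,\, q^{3/2}|h|^{3/2}))$.

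To estimate the tail, I would combine two ingredients. The first is the classical Gauss sum bound $|G(p, m_q + m, q)| \leq \sqrt{2q}$, which holds for every $m$ when $(p,q)=1$ and is uniform in $x_0$. Applied to the tail, this reduces matters to estimating $\sqrt{|h|/q}\sum_{m \neq 0} |F_\pm((x_q - m/q)/\sqrt{|h|})|$. The second ingredient, from Proposition~\ref{thm:AsymototicRationals}, is that $F_\pm$ is uniformly bounded and satisfies $F_\pm(\xi) = O(\xi^{-2})$. Since $|x_q| \leq 1/(2q)$ and $m \in \mathbb{Z}\setminus\{0\}$, we have $|x_q - m/q| \simeq |m|/q$ with universal constants.

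The main step — and the only one requiring care — is a dichotomy on the scale parameter $q\sqrt{|h|}$. If $q\sqrt{|h|} < 1$, then every argument $(x_q - m/q)/\sqrt{|h|}$ has absolute value $\gtrsim 1$, so the decay $F_\pm(\xi) = O(\xi^{-2})$ applies to every term and gives
\begin{equation}
\sum_{m \neq 0} \Big| F_\pm\Big(\tfrac{x_q - m/q}{\sqrt{|h|}}\Big) \Big| \lesssim q^2 |h| \sum_{m \neq 0} \frac{1}{m^2} \simeq q^2 |h|.
\end{equation}
If instead $q\sqrt{|h|} \geq 1$, I split the sum at $|m| \simeq q\sqrt{|h|}$: for $|m| \leq q\sqrt{|h|}$ the uniform bound on $F_\pm$ contributes $O(q\sqrt{|h|})$, while for $|m| > q\sqrt{|h|}$ the $\xi^{-2}$ decay contributes $q^2|h|\sum_{|m| > q\sqrt{|h|}} m^{-2} \simeq q\sqrt{|h|}$. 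Both cases combine to
\begin{equation}
\sum_{m \neq 0} \Big| F_\pm\Big(\tfrac{x_q - m/q}{\sqrt{|h|}}\Big) \Big| \lesssim \min(q\sqrt{|h|},\, q^2|h|),
\end{equation}
and multiplying by the prefactor $\sqrt{|h|/q}$ gives the desired error $O(\min(\sqrt{q}\,|h|,\, q^{3/2}|h|^{3/2}))$. All constants are universal, so the estimate is independent of $x_0$, $p$, and $q$, as claimed. The only nontrivial aspect is keeping the split at the correct scale $q\sqrt{|h|}$ so that both regimes of $F_\pm$ are used optimally; everything else is a direct consequence of Proposition~\ref{thm:AsymototicRationals} and the Gauss sum bound.
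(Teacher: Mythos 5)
Your proposal is correct and follows essentially the same route as the paper: shift the sum to isolate the $m=m_q$ term, bound the Gauss sums uniformly by $\sqrt{2q}$, and control the tail of $F_\pm$ values by splitting on the scale $q\sqrt{|h|}$, using boundedness of $F_\pm$ in the inner range and the $O(\xi^{-2})$ decay in the outer range. The arithmetic of the two regimes and the final multiplication by $\sqrt{|h|}/\sqrt{q}$ match the paper's calculation exactly.
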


\begin{rem}\label{thm:Remark_MainTerm}
The difference between $x_0 = 0$ and $x_0 \neq 0$ is clear from Corollary~\ref{thm:CorollaryAsymptotic}.
\begin{itemize}
	\item
	 If $x_0 = 0$, we have $x_q = 0 = m_q$ for all $q$. 
The main term is 
$|h|^{1/2} q^{-1} \, G(p,0,q) \, F_\pm(0)$, so there is a clear dichotomy: 
$R_0$ is differentiable at $p/q$ if and only if $G(p,0,q) = 0$, which happens if and only if $q \equiv 2 \pmod{4}$; 
in all other rationals, $R_{x_0}$ is $C^{1/2}$.  
	
	\item
	 If $x_0 \neq 0$, it is in general false that $x_q = 0$, so to determine the differentiability of $R_{x_0}$ we need to control the magnitude of $F_\pm(x_q/\sqrt{|h|})$. 
\end{itemize}
\end{rem}

\subsection{Lower bounds for the local H\"older regularity}
\label{sec:Lower_Bounds}

We now give lower bounds for $\alpha_{x_0}(t)$
that do not depend on $x_0$.  
In Section~\ref{sec:Lower_Holder_Rationals} we work with $t \in \mathbb Q$, 
and in Section~\ref{sec:Lower_Holder_Irrationals} with $t \not\in \mathbb Q$.  

\subsubsection{At rational points}\label{sec:Lower_Holder_Rationals}
There is a dichotomy in the H\"older regularity of $R_{x_0}$ at rational points. 
\begin{prop}\label{thm:Holder_Lower_Bound_At_Rationals}
Let $x_0 \in \mathbb R$ and $t \in \mathbb Q$. Then, either $\alpha_{x_0}(t) = 1/2$ or $\alpha_{x_0}(t) = 3/2$.  
\end{prop}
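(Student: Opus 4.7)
The plan is to apply Corollary~\ref{thm:CorollaryAsymptotic} at $t=p/q$ with $(p,q)=1$ and branch on whether the $|h|^{1/2}$-order summand in the expansion survives as $h\to 0$. For $|h|<1/q^2$ the corollary reduces to
\begin{equation}
R_{x_0}(p/q+h)-R_{x_0}(p/q)=-2\pi i h+\frac{\sqrt{|h|}}{q}\,G(p,m_q,q)\,F_\pm\Big(\frac{x_q}{\sqrt{|h|}}\Big)+O_q(|h|^{3/2}),
\end{equation}
and the whole question becomes whether the middle term is genuinely of order $|h|^{1/2}$ or is absorbed into the $O(|h|^{3/2})$ remainder.

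First I would handle the \emph{generic} regime in which either $G(p,m_q,q)=0$ or $x_q\neq 0$. In the former subcase the middle term vanishes identically; in the latter, $x_q/\sqrt{|h|}\to\infty$ as $h\to 0$, and the decay $F_\pm(\xi)=O(\xi^{-2})$ from Proposition~\ref{thm:AsymototicRationals} yields
\begin{equation}
\frac{\sqrt{|h|}}{q}\,|G(p,m_q,q)|\,\Big|F_\pm\Big(\frac{x_q}{\sqrt{|h|}}\Big)\Big|\lesssim\frac{|h|^{3/2}}{q\,x_q^2}=O_{q,x_0}(|h|^{3/2}).
\end{equation}
Choosing the affine polynomial $P_t(h)=R_{x_0}(p/q)-2\pi i h$, which has degree $1\leq 3/2$, gives $|R_{x_0}(t+h)-P_t(h)|\lesssim|h|^{3/2}$, whence $\alpha_{x_0}(t)\geq 3/2$.

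In the complementary regime $G(p,m_q,q)\neq 0$ and $x_q=0$, the middle term becomes $C_\pm\sqrt{|h|}$ with $C_\pm=q^{-1}G(p,m_q,q)F_\pm(0)$, and both constants are nonzero because $F_\pm(0)=2\pi(-1\pm i)$. Proposition~\ref{thm:DuistermaatLowerBound} already gives $\alpha_{x_0}(t)\geq 1/2$, so only the matching upper bound remains. I would argue by contradiction: any polynomial $P_t$ witnessing $\alpha_{x_0}(t)>1/2$ must satisfy $P_t(0)=R_{x_0}(p/q)$, so subtracting it from the expansion yields $C_\pm\sqrt{|h|}+(\text{integer powers of }h)+O_q(|h|^{3/2})$, whose leading behaviour as $h\to 0$ is $C_\pm\sqrt{|h|}$, incompatible with a bound $O(|h|^\alpha)$ for any $\alpha>1/2$ since $C_\pm\neq 0$. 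This forces $\alpha_{x_0}(t)=1/2$ and closes the dichotomy.

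The argument is essentially bookkeeping from Corollary~\ref{thm:CorollaryAsymptotic}, and I do not anticipate any serious obstacle. The main point requiring care is, in the $x_q\neq 0$ subcase, to verify that the regime $|h|\ll x_q^2$ in which the asymptotic $F_\pm(\xi)=O(\xi^{-2})$ is valid is precisely the relevant one as $h\to 0$, so that the ostensibly $|h|^{1/2}$-order middle term is in fact of order $|h|^{3/2}$.
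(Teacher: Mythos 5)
Your proposal is correct and follows essentially the same route as the paper: apply Corollary~\ref{thm:CorollaryAsymptotic} at $t=p/q$ and split on whether $G(p,m_q,q)=0$ or $x_q\neq 0$ (giving $\alpha_{x_0}(t)\geq 3/2$ via the decay of $F_\pm$ at infinity) versus $G(p,m_q,q)\neq 0$ and $x_q=0$ (giving $\alpha_{x_0}(t)=1/2$). Your explicit polynomial argument in the latter case spells out what the paper states more tersely by combining the lower bound $|R_{x_0}(t+h)-R_{x_0}(t)|\gtrsim_q |h|^{1/2}$ with Proposition~\ref{thm:DuistermaatLowerBound}.
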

\begin{proof}
Let $t = p/q$ with $(p,q)=1$. 
If $q$ is fixed, we get 
$\min \left( \sqrt{q} \, |h| , q^{3/2}\, |h|^{3/2}  \right) = q^{3/2} |h|^{3/2}$ for small enough $|h|$, 
so from Corollary~\ref{thm:CorollaryAsymptotic} we get
\begin{equation}\label{eq:From_Corollary}
 R_{x_0}\Big( \frac{p}{q} + h  \Big)  - R_{x_0}\Big( \frac{p}{q} \Big) = - 2\pi i h + \frac{\sqrt{|h|}}{q} \, G(p,m_q,q) F_\pm \Big( \frac{x_q}{\sqrt{|h|}} \Big) + O\Big( q^{3/2} h^{3/2} \Big). 
\end{equation}
Then, differentiability completely depends on 
the Gauss sum $G(p,m_q,q)$ and on $x_q$.
\begin{itemize}
	\item[\textbf{Case 1}] If $G(p,m_q,q) =0$, then $\big| R_{x_0}\big( \frac{p}{q} + h  \big)  - R_{x_0}\big( \frac{p}{q} \big)  + 2\pi i h \big| \lesssim_q h^{3/2}$, so $\alpha_{x_0}(p/q) \geq 3/2$. 
	\item[\textbf{Case 2}] If $G(p,m_q,q) \neq 0$ and $x_q \neq 0$.
	Then, $|G(p,m_q,q)| \simeq \sqrt{q}$ and $\lim_{h \to 0} x_q/\sqrt{|h|} = \infty$, so $\big| F_\pm \big( x_q/\sqrt{|h|} \big) \big| \lesssim h/x_q^2$. Hence, $\alpha_{x_0}(p/q) \geq 3/2$ because
	\begin{equation}
	 R_{x_0}\Big( \frac{p}{q} + h  \Big)  - R_{x_0}\Big( \frac{p}{q} \Big) = - 2\pi i h + O\Big(   \frac{\sqrt{h}}{\sqrt{q}} \frac{h}{x_q^2} + q^{3/2}h^{3/2} \Big)
	 = - 2\pi i h + O_q\big(  h^{3/2} \big).
	\end{equation}
	
	\item[\textbf{Case 3}] If $G(p,m_q,q) \neq 0$ and $x_q = 0$, 
	we have $|G(p,m_q,q)| \simeq \sqrt{q}$, so from \eqref{eq:From_Corollary} we get
		\begin{equation}
		\Big|   R_{x_0}\Big( \frac{p}{q} + h  \Big)  - R_{x_0}\Big( \frac{p}{q} \Big) \Big| 
		\geq   \frac{\sqrt{|h|}}{q} |G(p,m_q,q)| |F_\pm(0)| + O_q( h )
		\simeq \frac{\sqrt{h}}{\sqrt{q}} + O_q( h )
		\gtrsim_q h^{1/2}
		\end{equation}		  
		for $h \ll_q 1$. Together with Proposition~\ref{thm:DuistermaatLowerBound},
		this implies $\alpha_{x_0}(p/q) = 1/2$. 
\end{itemize}
That Cases 1 and 2 actually imply $\alpha_{x_0}(t) = 3/2$ is a bit more technical; we postpone the proof to Proposition~\ref{thm:Rationals32} in Appendix~\ref{sec:Asymptotic_Alternative}. 
\end{proof}

\subsubsection{At irrational points}\label{sec:Lower_Holder_Irrationals}
We give a lower bound $\alpha_{x_0}(t)$ that depends on the exponent of irrationality of $t$, but not on $x_0$.

\begin{prop}\label{thm:Lower_Bound_For_Holder_Regularity}
Let $x_0 \in \mathbb R$ and $t \in \mathbb R \setminus \mathbb Q$. 
Let $\mu(t)$ be the exponent of irrationality of $t$. Then, $\alpha_{x_0}(t) \geq \frac12 + \frac1{2\mu(t)}$. 
\end{prop}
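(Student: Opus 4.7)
The plan is to reduce the H\"older estimate at the irrational $t$ to the asymptotic expansion at rationals given by Corollary~\ref{thm:CorollaryAsymptotic}, by picking a rational approximation $p/q$ of $t$ with $q$ calibrated as a power of $|h|$. Writing $\tau := t - p/q$, I would use the identity
\[
R_{x_0}(t+h) - R_{x_0}(t) = \bigl[ R_{x_0}(p/q + \tau + h) - R_{x_0}(p/q) \bigr] - \bigl[ R_{x_0}(p/q + \tau) - R_{x_0}(p/q) \bigr]
\]
and apply Corollary~\ref{thm:CorollaryAsymptotic} to each bracket. The standard Gauss sum bound $|G(p,m_q,q)| \leq \sqrt{2q}$ together with the boundedness of $F_\pm$ bound the two main terms by a constant times $\sqrt{|\tau|+|h|}/\sqrt{q}$, while the remainders are controlled by $\sqrt{q}(|\tau|+|h|)$. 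After the linear parts cancel up to $O(|h|)$, this yields
\[
|R_{x_0}(t+h) - R_{x_0}(t)| \lesssim |h| + \frac{\sqrt{|\tau|+|h|}}{\sqrt{q}} + \sqrt{q}\,(|\tau|+|h|).
\]

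The technical core is the selection of $(p,q)$. I fix $\epsilon > 0$ and set $\mu' = \mu(t) + \epsilon$. By definition of the exponent of irrationality, there exists $q_0(\epsilon)$ such that $|t - p/q| > q^{-\mu'}$ for every coprime $(p,q)$ with $q \geq q_0$. For $|h|$ small, I apply Dirichlet's approximation theorem with parameter $Q = |h|^{-(1-1/\mu')}$ to obtain coprime $(p,q)$ satisfying $1 \leq q \leq Q$ and $|\tau| \leq 1/(qQ)$. Once $|h|$ is small enough that any such $q$ exceeds $q_0$, combining $q^{-\mu'} \leq |\tau| \leq 1/(qQ)$ forces $q \geq |h|^{-1/\mu'}$, and in turn $|\tau| \leq |h|$.

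Substituting into the previous bound, the lower bound $q \geq |h|^{-1/\mu'}$ gives $\sqrt{|h|}/\sqrt{q} \leq |h|^{1/2 + 1/(2\mu')}$, while $q \leq Q$ gives $\sqrt{q}\,|h| \leq \sqrt{Q}\,|h| = |h|^{1/2 + 1/(2\mu')}$; the term $|h|$ is of lower order. Hence $R_{x_0} \in \mathcal C^{1/2 + 1/(2\mu')}(t)$ for every $\epsilon > 0$, and letting $\epsilon \to 0$ gives $\alpha_{x_0}(t) \geq 1/2 + 1/(2\mu(t))$.

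The main obstacle is the calibration of the exponent $1 - 1/\mu'$ defining $Q$. The two contributions $\sqrt{|h|/q}$ and $\sqrt{q}|h|$ pull in opposite directions in $q$, and balancing them against both the Dirichlet constraint $q \leq Q$ and the irrationality constraint $q \geq Q^{1/(\mu'-1)}$ pins down the specific exponent. A secondary point is that the argument uses no structural information about $x_0$ beyond the uniform bounds $|G(p,m_q,q)| \leq \sqrt{2q}$ and $F_\pm$ bounded, which is consistent with the statement being uniform in $x_0$.
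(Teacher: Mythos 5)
Your proof is correct and yields the stated bound. It follows the same global strategy as the paper — write $R_{x_0}(t+h)-R_{x_0}(t)$ as a difference of two increments anchored at a rational $p/q$ near $t$, apply Corollary~\ref{thm:CorollaryAsymptotic} to each, and bound the main term by $\sqrt{|h|}/\sqrt{q}$ and the error by $\sqrt{q}\,|h|$ — but it selects the rational $p/q$ by a different mechanism. The paper (following Chamizo--Ubis) takes the continued fraction convergent $p_n/q_n$ of $t$ pinned down by the sandwich $|t-p_n/q_n| \le |h| < |t-p_{n-1}/q_{n-1}|$, and then extracts both the lower bound $q_n \gtrsim |h|^{-1/\mu_n}$ and the upper bound $q_n \lesssim |h|^{-(1-1/\mu_{n-1})}$ from the inequality $q_n^{-\mu_n} \le 1/(q_{n+1}q_n)$ between consecutive convergents. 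You instead invoke Dirichlet's theorem with an explicitly calibrated parameter $Q = |h|^{-(1-1/\mu')}$, which gives the upper bound $q \le Q$ for free, and then obtain the matching lower bound $q > |h|^{-1/\mu'}$ (and $|\tau| \le |h|$) by playing the Dirichlet inequality $|\tau| \le 1/(qQ)$ against the defining inequality $|\tau| > q^{-\mu'}$ of the irrationality exponent. Both routes arrive at the identical window $|h|^{-1/\mu'} \lesssim q \lesssim |h|^{-(1-1/\mu')}$, so the final estimates coincide; your version is perhaps slightly more self-contained in that it avoids the convergent recursion and makes the calibration of $q$ against $|h|$ completely explicit, at the mild cost of the extra bookkeeping needed to force $q \ge q_0(\epsilon)$ so that the irrationality bound applies. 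One small omission: you tacitly assume $\mu(t) < \infty$ when you set $\mu' = \mu(t)+\epsilon$; the degenerate case $\mu(t)=\infty$ should be dispatched separately (as the paper does) by noting that the claim then reduces to the global $C^{1/2}$ bound of Proposition~\ref{thm:DuistermaatLowerBound}.
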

The proof of this result, which we include for completeness, 
closely follows the procedure by Chamizo and Ubis \cite[Proof of Theorem 2.3]{ChamizoUbis2014}.
\begin{rem}
Similar to what happens for $x_0 = 0$, where $\alpha_0(t) = 1/2 + 1/2\widetilde \mu(t) \geq 1/2 + 1/2\mu(t)$ 
 (see  \eqref{eq:Holder_For_Riemann}), 
we do not expect the bound in Proposition~\ref{thm:Lower_Bound_For_Holder_Regularity} 
to be optimal for all $t \not\in \mathbb Q$. 
However, it will be enough to compute the spectrum of singularities. 
\end{rem}

\begin{proof}
In view of Proposition~\ref{thm:DuistermaatLowerBound}, 
there is nothing to prove if $\mu(t) = \infty$, so assume $\mu(t) < \infty$. 
Let $p_n/q_n$ be the $n$-th approximation by continued fractions of $t$. 
Center the asymptotic behavior in Corollary~\ref{thm:CorollaryAsymptotic} at $p_n/q_n$, 
and bound it from above by 
\begin{equation}\label{eq:AsymptoticPnQn}
\Big| R_{x_0}\Big( \frac{p_n}{q_n} + h  \Big)  - R_{x_0}\Big( \frac{p_n}{q_n} \Big)   \Big| \lesssim 
 \frac{\sqrt{|h|}}{\sqrt{q_n}} + |h|  + \min \Big( \sqrt{q_n} \, h , q_n^{3/2}\, h^{3/2}  \Big),
\end{equation}
where we used that $|G(p_n, m_{q_n}, q_n)| \leq \sqrt{2q_n}$ for all $n \in \mathbb N$ and $|F(x)| \lesssim 1$ for all $x \in \mathbb R$. 

Let $h \neq 0$ be small enough. 
The sequence $| t -  p_n/q_n|$ is strictly decreasing,  
so choose $n$ 
such that 
\begin{equation}\label{eq:HBetweenConvergents}
\left| t - \frac{p_n}{q_n} \right| \leq |h| < \left| t - \frac{p_{n-1}}{q_{n-1}} \right|.
\end{equation} 
Then,
from \eqref{eq:AsymptoticPnQn}, \eqref{eq:HBetweenConvergents} and $|t - p_n/q_n + h| \leq 2|h|$, we get
\begin{equation}\label{eq:AsymptoticIrrational}
\begin{split}
& \left| R_{x_0}\left( t + h  \right)  - R_{x_0}\left( t \right)   \right|  \\
& \qquad \qquad \qquad \leq \left| R_{x_0}\left(  \frac{p_n}{q_n} + t - \frac{p_n}{q_n} + h  \right)  - R_{x_0}\left( \frac{p_n}{q_n} \right)   \right|  + \left| R_{x_0}\left( \frac{p_n}{q_n} + t - \frac{p_n}{q_n}  \right)  - R_{x_0}\left( \frac{p_n}{q_n} \right)   \right| \\
& \qquad \qquad \qquad \lesssim  \frac{\sqrt{|h|}}{\sqrt{q_n}} + |h|  + \min \left( \sqrt{q_n} \, |h| , q_n^{3/2}\, |h|^{3/2}  \right).
\end{split}
\end{equation}
Next we compute the dependence between $q_n$ and $h$. 
By the property of continued fractions
\begin{equation}
\frac{1}{q_n^{\mu_n}} = \Big| t - \frac{p_n}{q_n} \Big| \leq \frac{1}{q_{n+1}q_n}, 
\end{equation}
we get $1/q_n \leq 1/q_{n+1}^{1/(\mu_n - 1)}$ for all $n  \in \mathbb N$. 
Then, from \eqref{eq:HBetweenConvergents} we get
\begin{equation}\label{eq:HBetweenMu}
\frac{1}{q_n^{\mu_n}} \leq |h| < \frac{1}{q_{n-1}^{\mu_{n-1}}} \leq \frac{1}{q_n^{\mu_{n-1} / (\mu_{n-1} - 1 ) }}. 
\end{equation}
We now bound each term in \eqref{eq:AsymptoticIrrational} using \eqref{eq:HBetweenMu}.
\begin{itemize}
	\item For the first term, by \eqref{eq:HBetweenMu},
	$
	\sqrt{|h|}/\sqrt{q_n} \leq  |h|^{\frac12 + \frac{1}{2\mu_n}}.
	$
	
	\item The fact that $\mu_n \geq 2$ implies $\frac12 + \frac{1}{2\mu_n} \leq \frac34$, so 
	$ |h| \leq |h|^{3/4} \leq  |h|^{\frac12 + \frac{1}{2\mu_n}}$ 
	and the second term is absorbed by the first one. 
	
	\item For the third term, we write the minimum as
	\begin{equation}
	\min (\sqrt{q_n} \, |h|, q_n^{3/2} \, |h|^{3/2}) = \left\{ \begin{array}{ll}
	\sqrt{q_n} \, |h|, & \text{ when } |h| \geq 1/q_n^2, \\
	q_n^{3/2} \, |h|^{3/2}& \text{ when } |h| \leq 1/q_n^2.
	\end{array}
	\right.
	\end{equation}
	So we have two regions: 
	\begin{itemize}
		\item When $|h| \geq 1/q_n^2$, use \eqref{eq:HBetweenMu} to bound  
		\begin{equation}
		\sqrt{q_n} \, |h| \leq \frac{|h|}{|h|^{(\mu_{n-1} - 1)/2\mu_{n-1}}} = |h|^{\frac12 +  \frac{1}{2\mu_{n-1}}}.
		\end{equation}
		
		\item When $|h| \leq 1/q_n^2$, we directly have $q_n \leq |h|^{-1/2}$, so 
		\begin{equation}
		q_n^{3/2} \, |h|^{3/2} = |h|^{3/2 - 3/4} = |h|^{3/4} \leq |h|^{\frac12 +  \frac{1}{2\mu_{n-1}}},
		\end{equation}
		where in the last inequality we used $\frac12 + \frac{1}{2\mu_{n-1}} \leq \frac34$ as before. 
\end{itemize}		
\end{itemize}
Gathering all cases,  we get 
\begin{equation}
| R_{x_0}(t + h) -  R_{x_0}(t) | \leq |h|^{\frac12 + \frac{1}{2\mu_n}} +  |h|^{ \frac12 + \frac{1}{2\mu_{n-1}} }.
\end{equation} 
From the definition of the exponent of irrationality
 $\mu(t) = \limsup_{n \to \infty} \mu_n$, for any $\delta > 0$  
 there exists $N_\delta \in \mathbb N$  such that $\mu_n \leq \mu(t) + \delta$ for all $n \geq N_\delta$. 
Then, since $|h| < 1$, we have $|h|^{\frac12 + \frac{1}{2\mu_n}} \leq |h|^{\frac12 + \frac{1}{2\mu(t) + 2\delta}} $ for all $n \geq N_\delta$. 
Renaming $\delta$, we get $N_\delta \in \mathbb N$ such that 
\begin{equation}
| R_{x_0}(t + h) -  R_{x_0}(t) | \leq |h|^{\frac12 + \frac{1}{2\mu(t)} - \delta}, \qquad \text{ for  all } \quad  |h| \leq \Big| t - \frac{p_{N_\delta}}{q_{N_\delta}} \Big|, 
\end{equation}
so $\alpha_{x_0}(t) \geq \frac12 + \frac{1}{2\mu(t)} - \delta$. Since this holds for all $\delta > 0$, we conclude that $\alpha_{x_0}(t) \geq \frac12 + \frac{1}{2\mu(t)}$. 
\end{proof}

\section{Proof of Theorem~\ref{thm:Main_Theorem_Rationals_Spectrum}: Spectrum of singularities when $x_0 \in \mathbb Q$}
\label{sec:Rational_Holder}

In this section we prove Theorem~\ref{thm:Main_Theorem_Rationals_Spectrum}.
Let us fix $x_0 = P/Q$ such that $(P,Q) = 1$. 
To compute the spectrum of singularities $d_{x_0}$, 
we first characterize the rational points $t$ where $R_{x_0}$ is not differentiable, 
and then we give an upper bound for the regularity $\alpha_{x_0}(t)$ at irrational $t$.

\subsection{At rational points $t$}
In the proof of Proposition~\ref{thm:Holder_Lower_Bound_At_Rationals}
we established that $R_{x_0}$ is not differentiable at $t=p/q$ if and only if
$ G(p,m_q, q) \neq 0$
and 
$x_q = \operatorname{dist} (x_0,\mathbb Z /q ) = 0$.
 We characterize this in the following proposition.

\begin{prop}\label{thm:Non-differentiability_Rationals}
Let $x_0 = P/Q$ with $\operatorname{gcd}(P,Q) = 1$, and let $p,q \in \mathbb N$ such that $\operatorname{gcd}(p,q) = 1$. 
Then, $R_{x_0}$ is non-differentiable at $t = p/q$ if and only if
\begin{itemize}
	\item $q = kQ$ with $k \equiv 0,1,3 \pmod{4}$, in the case $Q \equiv 1 \pmod{2}$.
	\item $q = kQ$ with $k \equiv 0 \pmod{2}$, in the case $Q \equiv 0 \pmod{4}$.
	\item $q = kQ$ with $k \in \mathbb Z$,  in the case $Q \equiv 2 \pmod{4}$.
\end{itemize} 
In all such cases, the asymptotic behavior is 
\begin{equation}\label{eq:Asymptotic_Nondifferentiable}
	R_{x_0}\left( \frac{p}{q} + h  \right)  - R_{x_0}\left( \frac{p}{q} \right) 
 =    c \, e^{2\pi i \phi_{p,q,x_0}} \, F_\pm(0) \, \frac{\sqrt{|h|}}{\sqrt{q}} - 2\pi i h +  O\left( \min \left( \sqrt{q} \, h , q^{3/2}\, h^{3/2}  \right) \right).   
	\end{equation}
	where $c=1$ or $c = \sqrt{2}$ depending on parity conditions of $Q$ and $q$. 
	In particular, $\alpha_{x_0}(t) = 1/2$.
\end{prop}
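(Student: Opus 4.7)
\medskip
\noindent\textbf{Proof proposal.} The plan is to exploit the dichotomy already isolated in the proof of Proposition~\ref{thm:Holder_Lower_Bound_At_Rationals}: the function $R_{x_0}$ fails to be differentiable at $p/q$ exactly in \emph{Case 3}, that is, precisely when
\begin{equation}
x_q = \operatorname{dist}\!\Big(x_0, \frac{\mathbb Z}{q}\Big) = 0 \qquad \text{and} \qquad G(p,m_q,q) \neq 0.
\end{equation}
The proof will therefore split in two independent combinatorial tasks: characterizing when $x_q = 0$ in terms of the denominators $Q$ and $q$, and characterizing when the Gauss sum $G(p,m_q,q)$ vanishes.

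First I would handle the condition $x_q=0$. Since $(P,Q)=1$, the equality $P/Q = m/q$ for some integer $m$ forces $Q \mid q$. Writing $q = k Q$ gives $m_q = k P$, and conversely every such pair yields $x_q=0$. Hence from this stage on the problem is entirely arithmetical: given $q = kQ$ and $m_q = kP$, decide whether $G(p,kP,kQ)\neq 0$, where $(p,q) = 1$ (so $p$ is odd whenever $q$ is even).

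The core of the argument is the case analysis for the quadratic Gauss sum. When $q$ is odd, $2p$ is invertible modulo $q$, so completing the square yields
\begin{equation}
G(p,m,q) = e^{-2\pi i \, m^{2}(4p)^{-1}/q}\, G(p,0,q),
\end{equation}
giving $|G(p,m,q)|=\sqrt{q}\neq 0$ for every $m$. When $q$ is even, I would split the summation index $r$ by parity (alternatively, apply CRT to $q = 2^{a} q'$) and detect that the summand is invariant or anti-invariant under $r\mapsto r+q/2$. The condition $p\,q/4+m/2\in\tfrac12+\mathbb Z$ then forces $G(p,m,q)=0$, which translates into:
\begin{itemize}
\item $q \equiv 2 \pmod 4$: $G(p,m,q)=0 \iff m$ even, and otherwise $|G(p,m,q)| = \sqrt{q}$;
\item $q \equiv 0 \pmod 4$: $G(p,m,q)=0 \iff m$ odd, and otherwise $|G(p,m,q)| = \sqrt{2q}$.
\end{itemize}
The non-vanishing magnitudes follow from factoring out the 2-adic part of the sum and reducing to an odd-modulus Gauss sum via completing the square. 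This step is the main obstacle, since the vanishing half is an easy $r\mapsto r+q/2$ pairing, but the non-vanishing half needs a clean factorization.

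Finally I would feed the three cases above into the trichotomy on $Q \bmod 4$. If $Q$ is odd, the parity of $q=kQ$ is the parity of $k$, and $m_q=kP$ has the parity of $k$; running through $k\pmod 4$ recovers exactly $k\equiv 0,1,3\pmod 4$. If $Q\equiv 0\pmod 4$, then $q\equiv 0\pmod 4$ always and $P$ is odd, so $G\neq 0$ iff $m_q=kP$ is even iff $k$ is even. If $Q\equiv 2\pmod 4$, then $P$ is odd: for $k$ odd we land in the $q\equiv 2\pmod 4$ regime with $m_q$ odd, and for $k$ even in the $q\equiv 0\pmod 4$ regime with $m_q$ even; both give $G\neq 0$, so every $k$ works. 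The asymptotic expansion \eqref{eq:Asymptotic_Nondifferentiable} is then obtained by substituting $x_q=0$ and $|G(p,m_q,q)| = c\sqrt{q}$ (with $c\in\{1,\sqrt{2}\}$ as computed above) into Corollary~\ref{thm:CorollaryAsymptotic}, and the fact $F_\pm(0)\neq 0$ together with Proposition~\ref{thm:DuistermaatLowerBound} yields $\alpha_{x_0}(p/q)=1/2$.
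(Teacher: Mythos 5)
Your proposal follows the paper's strategy almost exactly: reduce non-differentiability via the Case~3 dichotomy of Proposition~\ref{thm:Holder_Lower_Bound_At_Rationals}, characterize $x_q = 0$ by $Q \mid q$, then decide when $G(p,kP,kQ)$ vanishes via the parity criterion and run the case analysis modulo $4$. The only substantive difference is that you propose to re-derive the Gauss sum facts (the $r\mapsto r+q/2$ pairing for vanishing, completing the square / CRT for the magnitudes) rather than cite the classical characterization as the paper does; this is a stylistic choice, not a different route.

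One factual slip, however: for $q \equiv 2 \pmod{4}$ and $m$ odd you assert $|G(p,m,q)| = \sqrt{q}$, but the correct magnitude is $\sqrt{2q}$, the same as in the $q \equiv 0 \pmod{4}$ case. (Quick check: $G(1,1,2) = 1 + e^{2\pi i} = 2 = \sqrt{2\cdot 2}$.) This does not threaten the non-differentiability conclusion, but it would give $c=1$ instead of $c=\sqrt{2}$ in the asymptotic \eqref{eq:Asymptotic_Nondifferentiable} for $q \equiv 2\pmod 4$. Also, the remark ``$m_q = kP$ has the parity of $k$'' is only accurate when $P$ is odd; if $Q$ is odd and $P$ is even then $m_q$ is always even. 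It happens not to affect the conclusion (for $k$ odd you never look at $m_q$, and for $k$ even $kP$ is even regardless), but you would need to make that small argument explicit rather than hide it in the parity phrasing.
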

\begin{proof}
In view of the proof of Proposition~\ref{thm:Holder_Lower_Bound_At_Rationals}, 
we must identify the conditions for $G(p,m_q,q) \neq 0$ and $x_q = 0$. 
Since $x_q = \operatorname{dist}(P/Q, \mathbb Z / q)$, 
we have $x_q = 0$ when there exists $m_q \in \mathbb Z$ such that 
\begin{equation}
\frac{P}{Q} = \frac{m_q}{q} \quad \Longleftrightarrow \quad Pq = m_qQ.
\end{equation}
Since $\operatorname{gcd}(P,Q) = 1$, then necessarily $Q | q$.
Reversely, if $q = kQ$, then picking $m_q = kP$ we have $m_q/q = P/Q$. 
In short, 
\begin{equation}
x_q = 0 \quad  \Longleftrightarrow \quad q \text{ is a multiple of } Q. 
\end{equation}
So let $q = kQ$ for some $k \in \mathbb N$. Then, $m_q = kP$. 
Let us characterize the second condition $G(p,m_q,q) = G(p,kP, kQ) \neq 0$. 
It is well-known that 
\begin{equation}\label{eq:GaussSumCharacterization}
G(a,b,c) \neq 0 \quad \Longleftrightarrow \quad  \text{ either } \left\{ \begin{array}{ll}
c \text{ is odd, or } \\
c \text{ is even and } \frac{c}{2} \equiv b \pmod{2}.
\end{array}
\right.
\end{equation}
 We separate cases:
\begin{itemize}
	\item Suppose $Q$ is odd. Then, according to \eqref{eq:GaussSumCharacterization}, we need either
	\begin{itemize}
		\item $kQ$ odd, which holds if and only if $k$ is odd, or
		\item $kQ$ even, which holds if and only if $k$ is even, and $kQ/2 \equiv kP \pmod{2}$. Since $Q$ is odd and $k$ is even, this is equivalent to $k/2 \equiv 0 \pmod{2}$, which means $k \equiv 0 \pmod{4}$. 
	\end{itemize}
	Therefore, if $q = kQ$, the Gauss sum $G(p,m_q,q) \neq 0$ if and only if $k \equiv 0,1,3 \pmod{4}$. 
	
	\item Suppose $Q \equiv 0 \pmod{4}$. Since $q = kQ$ is even, by \eqref{eq:GaussSumCharacterization} we need $kQ/2 \equiv kP \pmod{2}$. Since $Q$ is a multiple of 4, this is equivalent to $ kP \equiv 0 \pmod{2}$. 
	But since $Q$ is even, then $P$ must be odd. Therefore, $k$ must be even. 
	In short, if $q = kQ$, we have $G(p,m_q,q) \neq 0$ if and only if $k$ is even. 

	\item Suppose  $Q \equiv 2 \pmod{4}$. Since $q = kQ$ is even, by \eqref{eq:GaussSumCharacterization} we need $kQ/2 \equiv kP \pmod{2}$. Now both $Q/2$ and $P$ are odd, so this is equivalent to $k \equiv k \pmod{2}$, which is of course true. 
	Therefore, if $q = kQ$, we have $G(p,m_q,q) \neq 0$ for all $k \in \mathbb Z$. 
\end{itemize}

Once  all cases have been identified, \eqref{eq:Asymptotic_Nondifferentiable} follows from Corollary~\ref{thm:CorollaryAsymptotic}
and from the fact that if $G(p,m_q,q) \neq 0$ we have $|G(p,m_q,q)| = c\sqrt{q}$ with $c = 1$ or $c = \sqrt{2}$. 
\end{proof}

\subsection{A general upper bound for irrational $t$}
We begin the study of $t \not\in \mathbb Q$ by giving a general upper bound for $\alpha_{x_0}(t)$ for $t \not\in \mathbb Q$.
The proof uses an alternative asymptotic expression around rationals that we postpone to Appendix~\ref{sec:Asymptotic_Alternative}.  
\begin{prop}\label{thm:UpperBound34MainText}
Let $x_0 \in \mathbb Q$ and $t \not\in \mathbb Q$. Then, $\alpha_{x_0}(t) \leq  3/4$. 
\end{prop}
\begin{proof}
See Appendix~\ref{sec:Asymptotic_Alternative}, Proposition~\ref{thm:UpperBound34}. 
\end{proof}

\subsection{Upper bounds depending on the irrationality of $t$}
\label{sec:Rational_x0_Irrational_t}
We now aim at an upper bound for $\alpha_{x_0}(t)$
that depends on the irrationality of $t$ at the level of Proposition~\ref{thm:Lower_Bound_For_Holder_Regularity}.
The idea is to approximate $t$ by rationals $p/q$ where $R_{x_0}$ is non-differentiable, which we characterized in Proposition~\ref{thm:Non-differentiability_Rationals}.
To avoid treating different cases depending on the parity of $Q$,  
let us restrict\footnote{We lose nothing with this reduction when computing the spectrum of singularities, but it may be problematic if we aim to compute the H\"older regularity $\alpha_{x_0}(t)$ for all $t$.   } $q \in 4Q \mathbb N$, 
such that the three conditions in Proposition~\ref{thm:Non-differentiability_Rationals} are simultaneously satisfied
and \eqref{eq:Asymptotic_Nondifferentiable} holds.
%

Let $\mu \in [2, \infty)$. 
Define the classic Diophantine set
\begin{equation}
A_\mu = \left\{ \, t \in (0,1) \setminus \mathbb Q \, : \,   \big| t - \frac{p}{q} \big| \leq \frac{1}{q^\mu} \, \, \text{ for i. m. coprime pairs } (p,q) \in \mathbb N \times \mathbb N    \,    \right\}
\end{equation}
and for $0 < a < 1$ small enough define the restricted Diophantine set
\begin{equation}
 A_{\mu,Q} = \left\{ \, t \in (0,1) \setminus \mathbb Q \, : \,   \big| t - \frac{p}{q} \big| \leq \frac{a}{q^\mu} \, \, \text{ for i. m. coprime pairs } (p,q) \in \mathbb N \times 4Q\mathbb N   \,    \right\}.
\end{equation}
For $\mu = \infty$ we define 
$A_\infty = \bigcap_{\mu \geq 2} A_\mu$ and 
$A_{\infty, Q} = \bigcap_{\mu \geq 2} A_{\mu, Q}$. 
Clearly, $  A_{\mu,Q} \subset A_\mu$. 
Our first step is to give an upper bound for $\alpha_{x_0}(t)$ for $t \in  A_{\mu,Q}$. 
\begin{prop}\label{thm:Upper_Bound_For_Holder_regularity}
Let $\mu \geq 2$ and $t \in A_{\mu,Q}$. 
Then, 
$\alpha_{x_0}(t) \leq \frac12 + \frac{1}{2\mu}$. 
\end{prop}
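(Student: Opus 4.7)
The plan is to exploit the non-differentiability of $R_{x_0}$ at the rational points $p/q$ with $q \in 4Q\mathbb{N}$ established in Proposition~\ref{thm:Non-differentiability_Rationals} to manufacture a sequence of increments $s_n \to 0$ such that $|R_{x_0}(t+s_n) - R_{x_0}(t)|$ is at least of the order $\sqrt{|s_n|}/\sqrt{q_n}$. Comparing this lower bound to any putative H\"older exponent $\alpha > 1/2 + 1/(2\mu)$ will force $q_n$ to stay bounded, yielding a contradiction. Concretely, since $t \in A_{\mu,Q}$, fix coprime pairs $(p_n, q_n) \in \mathbb{N} \times 4Q\mathbb{N}$ with $q_n \to \infty$ and $|h_n| \leq a\, q_n^{-\mu}$, where $h_n := t - p_n/q_n$.

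The choice $q_n \in 4Q\mathbb{N}$ is precisely what makes Proposition~\ref{thm:Non-differentiability_Rationals} applicable, so the expansion around $p_n/q_n$ carries a singular term $c\, e^{2\pi i \phi_n} F_\pm(0)\, \sqrt{|h|}/\sqrt{q_n}$ of amplitude bounded away from zero. Setting $s_n := -2h_n$ so that $t + s_n = p_n/q_n - h_n$ and $t = p_n/q_n + h_n$, I would subtract the expansions at $h = -h_n$ and $h = h_n$ and invoke the identity $F_-(0) - F_+(0) = -4\pi i$ to obtain
\begin{equation}
R_{x_0}(t+s_n) - R_{x_0}(t) \,=\, 4\pi i\, h_n \,-\, 4\pi i\, c\, e^{2\pi i \phi_n}\, \frac{\sqrt{|h_n|}}{\sqrt{q_n}} \,+\, O\!\left(q_n^{3/2}|h_n|^{3/2}\right).
\end{equation}
Since $\mu \geq 2$ gives $q_n^2|h_n| \leq a\, q_n^{2-\mu} \leq a$, both the linear term $h_n$ and the error $q_n^{3/2}|h_n|^{3/2}$ are controlled by $a^{1/2} \sqrt{|h_n|}/\sqrt{q_n}$; choosing $a$ small in the definition of $A_{\mu,Q}$ makes the singular term dominate, yielding
\begin{equation}
|R_{x_0}(t+s_n) - R_{x_0}(t)| \,\gtrsim\, \frac{\sqrt{|h_n|}}{\sqrt{q_n}}.
\end{equation}

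Finally, suppose for contradiction that $\alpha_{x_0}(t) > 1/2 + 1/(2\mu)$, and fix $\alpha$ with $1/2 + 1/(2\mu) < \alpha < \min(\alpha_{x_0}(t), 1)$, feasible because $1/2 + 1/(2\mu) \leq 3/4 < 1$. Since $\alpha < 1$, the approximating polynomial in the H\"older condition must equal the constant $R_{x_0}(t)$, so $|R_{x_0}(t+s_n) - R_{x_0}(t)| \leq C |s_n|^\alpha \lesssim |h_n|^\alpha$. Combining with the lower bound and $|h_n| \leq a\, q_n^{-\mu}$ produces
\begin{equation}
q_n^{\mu(\alpha - 1/2) - 1/2} \,\lesssim\, 1,
\end{equation}
which is impossible as $q_n \to \infty$ since $\mu(\alpha - 1/2) - 1/2 > 0$ by the choice of $\alpha$. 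The main obstacle in executing this plan is ensuring that the singular parts of the expansion at $+h_n$ and $-h_n$ do not cancel after subtraction; this is exactly why both $F_+(0) \ne F_-(0)$ and the restriction $q_n \in 4Q\mathbb{N}$ (which keeps the Gauss sum $G(p_n, m_{q_n}, q_n)$ of size $\asymp \sqrt{q_n}$) are essential to the argument.
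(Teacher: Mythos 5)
Your proof is correct and rests on the same key ingredients as the paper's: the asymptotic expansion of Corollary~\ref{thm:CorollaryAsymptotic} (or equivalently \eqref{eq:Asymptotic_Nondifferentiable}) centered at $p_n/q_n$ with $q_n \in 4Q\mathbb{N}$, absorption of the linear and error terms using $\mu \geq 2$ and a small constant $a$ in the definition of $A_{\mu,Q}$, and the resulting lower bound $\gtrsim \sqrt{|h_n|}/\sqrt{q_n}$ that contradicts any putative H\"older exponent larger than $1/2 + 1/(2\mu)$.

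The one place you deviate from the paper is the symmetrization step: you set $s_n = -2h_n$ and subtract the expansions at $\pm h_n$, which forces you to invoke $F_-(0) - F_+(0) = -4\pi i \neq 0$ to keep the singular terms from cancelling. This works, but it is an extra complication you do not need. The paper simply takes the one-sided increment $R_{x_0}(t) - R_{x_0}(p_n/q_n)$, which is already an increment at $t$ (with $h = p_n/q_n - t$), absorbs the $-2\pi i h_n$ term and the error into the main term exactly as you do, and only needs $|F_\pm(0)| \neq 0$ rather than $F_+(0) \neq F_-(0)$. Since $1/2 + 1/(2\mu) < 1$, the approximating polynomial in the H\"older condition is a constant, so the one-sided increment is all that is required; there is no linear term to defeat. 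Your argument also implicitly passes to a subsequence to guarantee $q_n \to \infty$ (as the paper does explicitly), which is fine since $t$ is irrational. Overall the logic is sound; the direct one-sided version is just leaner.
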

\begin{proof}
We begin with the case $\mu < \infty$. 
If $t \in  A_{\mu,Q}$, 
there is a sequence of irreducible fractions $p_n/q_n$ with $q_n \in 4Q\mathbb N$, for which  
we can use \eqref{eq:Asymptotic_Nondifferentiable} and write
\begin{equation}\label{eq:Asymptotic_EvenConvergents}
	R_{x_0}\left( t  \right)  - R_{x_0}\Big( \frac{p_n}{q_n} \Big) 
 =    c \, e^{2\pi i \phi_{n,x_0}} \,  \frac{\sqrt{|h_n|}}{\sqrt{q_n}} - 2\pi i h_n +  O\left( \min \left( \sqrt{q_n} \, h_n , q_n^{3/2}\, h_n^{3/2}  \right) \right),
	\end{equation}
where we absorbed $F(0)$ into $c$ and we defined $h_n$ and $\mu_n$ as 
\begin{equation}\label{eq:Approximation_Distance}
h_n = t - \frac{p_n}{q_n}, \quad 
|h_n|
= \frac{1}{q_n^{\mu_n}} \leq \frac{a}{q_n^\mu} < \frac{1}{q_n^\mu}. 
\end{equation}
We now absorb the second and third terms in \eqref{eq:Asymptotic_EvenConvergents} in the first term.
First, $\mu \geq 2$ implies $q_n^2 |h_n| \leq 1$,  
so 
$
	\min ( \sqrt{q_n} \, |h_n| , q_n^{3/2}\, |h_n|^{3/2} ) = q_n^{3/2}\, |h_n|^{3/2}. 
$
Letting $C$  be the universal constant in the $O$ in \eqref{eq:Asymptotic_EvenConvergents},  
\begin{equation}
C\, q_n^{3/2} |h_n|^{3/2} \leq \frac{c}{4}  \frac{\sqrt{|h_n|}}{\sqrt{q_n}}
\qquad  \Longleftrightarrow \qquad 
q_n^2 |h_n| \leq \frac{ c}{4C}, 
\end{equation}
and since $q_n^2 |h_n| \leq a q_n^{2 - \mu } \leq a$, 
it suffices to ask $a \leq c/(4C)$. 
Regarding the second term, we have
\begin{equation}
2\pi |h_n| \leq \frac{c}{4} \,   \frac{  \sqrt{|h_n|}}{\sqrt{q_n} }
 \qquad \Longleftrightarrow  \qquad 
 q_n \, |h_n| \leq \Big( \frac{c}{8\pi} \Big)^2 
\end{equation}
This holds for large $n$ because $q_n^2 |h_n| \leq 1$ implies $q_n \, |h_n| \leq 1/q_n$, and because $\limsup_{n \to \infty} q_n = \infty$ (otherwise $q_n$ would be bounded and hence the sequence $p_n/q_n$ would be finite). 
All together, using the reverse triangle inequality in \eqref{eq:Asymptotic_EvenConvergents} and the bound for $h_n$ in \eqref{eq:Approximation_Distance}
\begin{equation}\label{eq:Asymptotic_EvenConvergents_LowerBound}
	\Big| R_{x_0}\left( t  \right)  - R_{x_0}\Big( \frac{p_n}{q_n} \Big)  \Big| \geq    \frac{c}{2}  \,  \frac{\sqrt{|h_n|}}{\sqrt{q_n}} \geq \frac{c}{2}  \,  |h_n|^{\frac12 + \frac{1}{2\mu}}, 
	\qquad \forall n \gg 1. 
\end{equation}
This means that $R_{x_0}$ cannot be better than $\mathcal C^{\frac12 + \frac{1}{2\mu}}$ at $t$, 
thus concluding the proof for $\mu < \infty$. 

If $t \in A_{ \infty,Q}$, by definition $t \in A_{\mu, Q}$ for all $\mu \geq 2$, hence we just proved that $\alpha_{x_0}(t) \leq 1/2 + 1/(2\mu)$ for all $\mu \geq 2$. Taking the limit $\mu \to \infty$ we get $\alpha_{x_0}(t) \leq 1/2$. 
\end{proof}

To prove Theorem~\ref{thm:Main_Theorem_Rationals_Spectrum}, 
we need to compute 
 $\operatorname{dim}_{\mathcal H}\{ \, t \, : \, \alpha_{x_0}(t) = \alpha  \,   \}$
with prescribed $\alpha$. 
For that, we need to complement Proposition~\ref{thm:Upper_Bound_For_Holder_regularity}
by proving that for $t  \in A_{\mu,Q}$ we also have $\alpha_{x_0}(t) \geq \frac12 + \frac{1}{2\mu}$. 
By Proposition~\ref{thm:Lower_Bound_For_Holder_Regularity}, 
it would suffice to prove that  $t \in  A_{\mu,Q}$ has irrationality $\mu (t) = \mu$. 
Unfortunately, when $\mu < \infty$ this need not be true.
To fix this, for $2 \leq \mu < \infty$ define the companion sets 
\begin{equation}
 B_\mu   = A_\mu \setminus \bigcup_{\epsilon > 0} A_{\mu + \epsilon} 
 = \Big\{ \, t \in  A_\mu \, \mid \,   \forall \epsilon > 0, \,  \big| t - \frac{p}{q} \big| \leq \frac{1}{q^{\mu + \epsilon}} \, \, \text{ only for finitely many } \frac{p}{q} \,    \Big\},
\end{equation} 
and
\begin{equation}\label{eq:B_Mu_Q}
 B_{\mu,Q}  = A_{\mu,Q}  \setminus \bigcup_{\epsilon > 0} A_{\mu + \epsilon}  
 = \Big\{ \, t \in  A_{\mu,Q} \, \mid \,   \forall \epsilon > 0, \,  \big| t - \frac{p}{q} \big| \leq \frac{1}{q^{\mu + \epsilon}} \, \, \text{ only for finitely many } \frac{p}{q} \,    \Big\},
\end{equation} 
which have the properties we need. 
\begin{prop}\label{thm:Holder_Regularity_In_Bmu}
Let $2 \leq \mu < \infty$. Then, 
\begin{enumerate}[(i)]
	\item $  B_{\mu,Q} \subset B_\mu \subset \{   \,  t \in \mathbb R \setminus \mathbb Q  \, : \, \mu(t) = \mu \,      \}$.   
	\item If $t \in B_{\mu,Q}$, then $\alpha_{x_0}(t) = \frac12 + \frac{1}{2\mu}$.    
\item If $t \in A_{\infty,Q}$, then $\alpha_{x_0}(t) = 1/2$.  
\end{enumerate}
\end{prop}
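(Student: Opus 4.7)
The plan is to split the statement into its three parts and observe that each reduces to a direct combination of results already proved in the paper. No new technical work will be needed; the proposition is essentially a bookkeeping consequence of Propositions~\ref{thm:DuistermaatLowerBound}, \ref{thm:Lower_Bound_For_Holder_Regularity} and \ref{thm:Upper_Bound_For_Holder_regularity}.

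For part $(i)$, I would first note that $B_{\mu,Q}\subset B_\mu$ follows immediately from $A_{\mu,Q}\subset A_\mu$, together with the identical way the sets $A_{\mu+\epsilon}$ are subtracted off in both definitions. To then show $B_\mu \subset \{\,t\in\mathbb R\setminus\mathbb Q : \mu(t)=\mu\,\}$, I would argue as follows. Membership $t\in A_\mu$ gives $\mu(t)\geq \mu$ directly from the definition of the irrationality exponent in \eqref{eq:Exponent_Of_Irrationality}. Conversely, since $t \notin A_{\mu+\epsilon}$ for every $\epsilon>0$, only finitely many coprime pairs $(p,q)$ satisfy $|t-p/q|\leq q^{-(\mu+\epsilon)}$, and hence $\mu(t)\leq \mu+\epsilon$; letting $\epsilon\to 0$ yields $\mu(t)=\mu$.

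For part $(ii)$, I would take $t\in B_{\mu,Q}$ and use $B_{\mu,Q}\subset A_{\mu,Q}$ together with Proposition~\ref{thm:Upper_Bound_For_Holder_regularity} to obtain the upper bound $\alpha_{x_0}(t)\leq \frac12+\frac{1}{2\mu}$. For the matching lower bound, part $(i)$ gives $\mu(t)=\mu$, so Proposition~\ref{thm:Lower_Bound_For_Holder_Regularity} produces $\alpha_{x_0}(t)\geq \frac12+\frac{1}{2\mu(t)}=\frac12+\frac{1}{2\mu}$. For part $(iii)$, since $A_{\infty,Q}=\bigcap_{\mu\geq 2}A_{\mu,Q}$, I would apply Proposition~\ref{thm:Upper_Bound_For_Holder_regularity} for every $\mu\geq 2$ to get $\alpha_{x_0}(t)\leq \frac12+\frac{1}{2\mu}$ for all such $\mu$, then send $\mu\to\infty$ to obtain $\alpha_{x_0}(t)\leq 1/2$. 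The matching global lower bound $\alpha_{x_0}(t)\geq 1/2$ is exactly the content of Proposition~\ref{thm:DuistermaatLowerBound}.

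I do not expect any genuine obstacle here: the substantive work of the section lies in the earlier Proposition~\ref{thm:Upper_Bound_For_Holder_regularity}, whose proof required choosing the parameter $a>0$ small enough to absorb the error terms in the expansion \eqref{eq:Asymptotic_EvenConvergents_LowerBound} into the leading $\sqrt{h_n/q_n}$ term. Once that upper bound and the general lower bound of Proposition~\ref{thm:Lower_Bound_For_Holder_Regularity} are in place, the present statement is just a matching exercise, the only subtle point being the need to pass to $B_{\mu,Q}$ in order to pin the irrationality exponent exactly at $\mu$ rather than merely bounding it from below.
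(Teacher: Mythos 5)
Your proposal is correct and follows essentially the same route as the paper: part $(i)$ is the same manipulation of the definitions of $B_\mu$, $A_\mu$, and $\mu(t)$, part $(ii)$ combines the upper bound from Proposition~\ref{thm:Upper_Bound_For_Holder_regularity} with the lower bound from Proposition~\ref{thm:Lower_Bound_For_Holder_Regularity} via $\mu(t)=\mu$, and part $(iii)$ cites Propositions~\ref{thm:DuistermaatLowerBound} and \ref{thm:Upper_Bound_For_Holder_regularity}. The only cosmetic difference is that for $(iii)$ you re-run the $\mu\to\infty$ limit, whereas the paper already built that case into Proposition~\ref{thm:Upper_Bound_For_Holder_regularity} and simply cites it.
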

\begin{proof}
$(i)$ First,  $ B_{\mu,Q} \subset B_\mu$ because $ A_{\mu,Q} \subset A_\mu$. 
The second inclusion is a consequence of the definition of the irrationality exponent in \eqref{eq:Exponent_Of_Irrationality}. 
Indeed, $t \in B_\mu \subset A_\mu$ directly implies that $\mu(t) \geq \mu$. 
On the other hand, for all $\epsilon > 0$, $t \in B_\mu$ implies $t \notin A_{\mu + \epsilon}$, so $t$ can be approximated with the exponent $\mu + \epsilon$ only with finitely many fractions, and thus $\mu(t) \leq \mu + \epsilon$. 
Consequently, $\mu(t) \leq \mu$.

$(ii)$ By $(i)$, $t \in  B_{\mu,Q}$ implies $\mu(t) = \mu$, so by Proposition~\ref{thm:Lower_Bound_For_Holder_Regularity} we get $\alpha_{x_0} (t) \geq \frac12 + \frac{1}{2\mu}$. 
At the same time,  $t \in B_{\mu,Q} \subset  A_{\mu,Q}$, so Proposition~\ref{thm:Upper_Bound_For_Holder_regularity} implies $\alpha_{x_0}(t) \leq \frac12 + \frac{1}{2\mu}$. 

$(iii)$ It follows directly from Propositions~\ref{thm:DuistermaatLowerBound} and \ref{thm:Upper_Bound_For_Holder_regularity}. 
\end{proof}

\begin{cor}\label{thm:Sandwich_Sets}
Let $2 < \mu < \infty$. Then, for all $\epsilon > 0$, 
\begin{equation}
 B_{\mu,Q} \subset  \left\{ \, t \in (0,1) \, : \, \alpha_{x_0}(t) = \frac12 + \frac{1}{2\mu}  \,  \right\} \subset A_{\mu - \epsilon}. 
\end{equation}
For $\mu = 2$ we have the slightly more precise
\begin{equation}
 B_{2,Q} \subset  \{ \, t \in (0,1) \, : \, \alpha_{x_0}(t) =  3/4 \,  \}  \subset A_2.
\end{equation}
For $\mu = \infty$, 
\begin{equation}
 A_{\infty,Q} \subset  \{ \, t \in (0,1) \, : \, \alpha_{x_0}(t) =  1/2  \,  \}  \subset A_\infty \cup \mathbb Q.
\end{equation}
\end{cor}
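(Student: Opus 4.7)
The plan is to dispatch the left and right inclusions separately, since the left ones are essentially already done. Indeed, the inclusion $B_{\mu,Q} \subset \{t : \alpha_{x_0}(t) = 1/2 + 1/(2\mu)\}$ for $2 \leq \mu < \infty$ is exactly Proposition~\ref{thm:Holder_Regularity_In_Bmu}(ii), and the analogous inclusion for $\mu = \infty$ is Proposition~\ref{thm:Holder_Regularity_In_Bmu}(iii). So all the content lies in the upper inclusions.

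For the right inclusion in the case $2 < \mu < \infty$, the strategy is contrapositive: I want to show that if $t \notin A_{\mu - \epsilon}$, then $\alpha_{x_0}(t) \neq 1/2 + 1/(2\mu)$. Split on whether $t$ is rational. If $t \in \mathbb Q$, Proposition~\ref{thm:Holder_Lower_Bound_At_Rationals} gives $\alpha_{x_0}(t) \in \{1/2\} \cup [3/2, \infty)$, which misses the target value $1/2 + 1/(2\mu) \in (1/2, 3/4)$. If $t$ is irrational, I use the nesting $A_{\nu'} \supset A_{\nu}$ for $\nu' < \nu$ (immediate from the definition) together with $\mu(t) = \sup\{\nu : t \in A_\nu\}$ to conclude that $t \notin A_{\mu - \epsilon}$ forces $\mu(t) \leq \mu - \epsilon$; then Proposition~\ref{thm:Lower_Bound_For_Holder_Regularity} yields $\alpha_{x_0}(t) \geq 1/2 + 1/(2(\mu - \epsilon)) > 1/2 + 1/(2\mu)$. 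The case $\mu = 2$ is even easier: by the Dirichlet approximation theorem $A_2 = [0,1] \setminus \mathbb Q$, so the right inclusion reduces to ruling out $\alpha_{x_0}(t) = 3/4$ at rational $t$, which is again immediate from Proposition~\ref{thm:Holder_Lower_Bound_At_Rationals} since $3/4 \notin \{1/2\} \cup [3/2,\infty)$.

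For $\mu = \infty$ the same method applies. Suppose $t \notin A_\infty \cup \mathbb Q$. Since $A_\infty = \bigcap_{\nu \geq 2} A_\nu$, there is some $\mu_0 < \infty$ with $t \notin A_{\mu_0}$, and by the same nesting argument $\mu(t) \leq \mu_0 < \infty$. Then Proposition~\ref{thm:Lower_Bound_For_Holder_Regularity} forces $\alpha_{x_0}(t) \geq 1/2 + 1/(2\mu_0) > 1/2$, so $t$ is not in the level set $\{\alpha_{x_0} = 1/2\}$. No substantive obstacle is expected: everything is a direct contrapositive assembly of the three preliminary propositions plus the elementary implication $t \notin A_\nu \Rightarrow \mu(t) \leq \nu$. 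If anything, the only point deserving a line of justification is that last implication, which I would record explicitly so the reader does not have to verify the supremum bookkeeping.
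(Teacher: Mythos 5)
Your proposal is correct and follows essentially the same route as the paper: the left inclusions are Proposition~\ref{thm:Holder_Regularity_In_Bmu}(ii)--(iii), and the right inclusions all come from combining Proposition~\ref{thm:Holder_Lower_Bound_At_Rationals} (to exclude rationals from the mid-range level sets), Proposition~\ref{thm:Lower_Bound_For_Holder_Regularity}, and the relation between the irrationality exponent $\mu(t)$ and membership in the nested family $A_\nu$. The only difference is presentational -- you argue by contrapositive where the paper argues directly (e.g.\ the paper derives $\mu(t) \geq \mu$ from $\alpha_{x_0}(t) = \tfrac12 + \tfrac1{2\mu}$ and then reads off $t \in A_{\mu - \epsilon}$, while you negate $t \in A_{\mu-\epsilon}$ first) -- but the key implication $t \notin A_\nu \Rightarrow \mu(t) \leq \nu$ is precisely the contrapositive of the step the paper uses, so there is no substantive divergence.
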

\begin{proof}
Left inclusions follow from Proposition~\ref{thm:Holder_Regularity_In_Bmu} for all $\mu \geq 2$, so we only need to prove the right inclusions. 
When $\mu = 2$, it follows from the Dirichlet approximation theorem, which states that $\mathbb R \setminus \mathbb Q \subset A_2$, 
and Proposition~\ref{thm:Holder_Lower_Bound_At_Rationals}, in which we proved that if $t$ is rational, then either $\alpha_{x_0}(t) = 1/2$ or $\alpha_{x_0}(t) \geq 3/2$. 
Thus,  $\{ \, t \in (0,1) \, : \, \alpha_{x_0}(t) =  3/4 \,  \}  \subset (0,1) \setminus \mathbb Q \subset A_2$. 
Suppose now that $2 < \mu < \infty$ and that $\alpha_{x_0}(t) = \frac12 + \frac{1}{2\mu}$.
By Proposition~\ref{thm:Lower_Bound_For_Holder_Regularity}, $\alpha_{x_0}(t) \geq \frac12 + \frac{1}{2\mu(t)}$, so we get
$\mu \leq \mu(t)$.
In particular, given any $\epsilon > 0$, we have $\mu - \epsilon < \mu(t)$, so  
$ \big| t - \frac{p}{q} \big| \leq 1/q^{\mu - \epsilon}$ 
for infinitely many coprime pairs $ (p,q) \in \mathbb N \times \mathbb N$, 
which means that $t \in A_{\mu - \epsilon}$. 
Finally, for $\mu = \infty$, if $t \not\in \mathbb Q$ is such that $\alpha_{x_0}(t) = 1/2$, then by Proposition~\ref{thm:Lower_Bound_For_Holder_Regularity} we get $\mu(t) = \infty$, 
which implies that $t \in A_\mu$ for all $\mu \geq 2$, hence $t \in A_\infty$. 
\end{proof}

Now, to prove Theorem~\ref{thm:Main_Theorem_Rationals_Spectrum}
it suffices to compute $\operatorname{dim}_{\mathcal H}  A_\mu$ and $\operatorname{dim}_{\mathcal H}  B_{\mu,Q}$.  
\begin{prop}\label{thm:Dimensions}
For  $2 \leq \mu < \infty$,  
$\operatorname{dim}_{\mathcal H} A_\mu 
= \operatorname{dim}_{\mathcal H} B_{\mu,Q}
= 2/\mu$. 
Also, $\operatorname{dim}_{\mathcal H} A_\infty = 0$. 
\end{prop}
Form this result, whose proof we postpone, we can prove 
Theorem~\ref{thm:Main_Theorem_Rationals_Spectrum}
as a corollary.
\begin{thm}\label{thm:Spectrum_Rationals}
Let $x_0 \in \mathbb Q$. Then, the spectrum of singularities of $R_{x_0}$ is
\begin{equation}
d_{x_0}(\alpha) = 
\left\{
\begin{array}{ll}
4  \alpha - 2, & 1/2 \leq \alpha \leq 3/4, \\
0, & \alpha = 3/2, \\
-\infty, & \text{otherwise.}
\end{array} \right.
\end{equation}
\end{thm}
\begin{proof}
Proposition~\ref{thm:DuistermaatLowerBound} implies $d(\alpha) = -\infty$ when $\alpha < 1/2$, while Propositions~\ref{thm:Holder_Lower_Bound_At_Rationals} and \ref{thm:UpperBound34MainText} imply that $d_{x_0}(3/2) = 0$ and $d_{x_0}(\alpha) = -\infty$ if $\alpha > 3/4$ and $\alpha \neq 3/2$.
When $1/2 \leq \alpha \leq 3/4$, 
it follows from
Corollary~\ref{thm:Sandwich_Sets}, Proposition~\ref{thm:Dimensions} and the periodicity of $R_{x_0}$. 
First, 
$d_{x_0}(1/2) \leq  \operatorname{dim}_{\mathcal H} ( A_\infty \cup \mathbb Q ) = 0$
because $\operatorname{dim}_{\mathcal H} \mathbb Q = \operatorname{dim}_{\mathcal H}  A_\infty = 0$. 
On the other hand, for  $2 \leq \mu < \infty$ we get
\begin{equation}
\frac{2}{\mu} \leq 
d_{x_0} \left( \frac12 + \frac{1}{2\mu} \right) \leq \frac{2}{\mu - \epsilon},  \qquad \forall \epsilon > 0 
\qquad \Longrightarrow \qquad
d_{x_0}\left( \frac12 + \frac{1}{2\mu} \right) = \frac{2}{\mu}. 
\end{equation}
which gives the result for $1/2 < \alpha \leq 3/4$ by renaming $\alpha =  1/2 + 1/(2\mu)$.
\end{proof}

Let us now prove Proposition~\ref{thm:Dimensions}. 
\begin{proof}[Proof of Proposition~\ref{thm:Dimensions}]
We have $A_2 = (0,1) \setminus \mathbb Q $ by Dirichlet approximation, so $\operatorname{dim}_{\mathcal H} A_2 = 1$. 
For $\mu > 2$ we have $\operatorname{dim}_{\mathcal H} A_\mu = 2/\mu$ by the Jarnik-Besicovitch Theorem~\ref{thm:Jarnik_Besicovitch}. 
Also, $A_\infty \subset A_\mu$ for all $\mu \geq 2$, 
so $\operatorname{dim}_{\mathcal H} A_\infty \leq 2/\mu$ 
for all $\mu \geq 2$,
hence $\operatorname{dim}_{\mathcal H} A_\infty = 0$. 
So we only need to prove that $\operatorname{dim}_{\mathcal H} B_{\mu,Q} = 2/\mu$ for $2 \leq \mu < \infty$. 
Moreover, 
\begin{equation}
 B_{\mu,Q} = A_{\mu,Q} \setminus \bigcup_{\epsilon > 0} A_{\mu + \epsilon} \subset  A_{\mu,Q} \subset A_\mu, 
\end{equation}
which implies $\dim_{\mathcal H}  B_{\mu,Q} \leq  \dim_{\mathcal H}  A_\mu = 2/\mu$. 
Hence it suffices to prove that $\operatorname{dim}_{\mathcal H} B_{\mu,Q} \geq 2/\mu$. 
This claim follows 
from $\mathcal H^{2/\mu}( A_{\mu,Q} ) >  0$. 
Indeed, 
we first remark that the sets $A_\mu$ are nested, in the sense that 
$A_{\sigma} \subset A_\mu$ when $\sigma > \mu$.
We can therefore write
\begin{equation}
 \bigcup_{\epsilon > 0} A_{\mu + \epsilon} = \bigcup_{n \in \mathbb N} A_{\mu + \frac{1}{n}}.
\end{equation}
By the Jarnik-Besicovitch Theorem~\ref{thm:Jarnik_Besicovitch}, $\dim_{\mathcal H} A_{\mu +  1/n} = 2/(\mu + 1/n) < 2/\mu$, so $\mathcal H^{2/\mu}(A_{\mu + 1/n}) = 0$ for all $n \in \mathbb N$, 
hence
\begin{equation}
\mathcal H^{2/\mu} \Big( \bigcup_{\epsilon  > 0} A_{\mu + \epsilon} \Big)
= \mathcal H^{2/\mu} \Big( \bigcup_{n \in \mathbb N} A_{\mu + \frac{1}{n}} \Big)
 = \lim_{n \to \infty} \mathcal H^{2/\mu} \big( A_{\mu + \frac{1}{n}} \big) = 0. 
\end{equation}
Therefore, 
\begin{equation}
\mathcal H^{2/\mu} \big( B_{\mu,Q} \big)
= \mathcal H^{2/\mu} \Big(  A_{\mu,Q} \setminus \bigcup_{\epsilon > 0} A_{\mu + \epsilon} \Big) = \mathcal H^{2/\mu} (  A_{\mu,Q}  )  - \mathcal H^{2/\mu} \Big( \bigcup_{\epsilon  > 0} A_{\mu + \epsilon} \Big)
= \mathcal H^{2/\mu} \left(  A_{\mu,Q}  \right),  
\end{equation}
so $\mathcal H^{2/\mu}( A_{\mu,Q}) > 0$ implies $\mathcal H^{2/\mu}( B_{\mu,Q}) > 0$,
hence  $\dim_{\mathcal H}  B_{\mu,Q} \geq 2/\mu$. 

Let us thus prove 
$\mathcal H^{2/\mu}( A_{\mu,Q}) > 0$, 
for which we follow the procedure outlined in Section~\ref{sec:Diophantine_Approximation} with the set of denominators $\mathcal Q = 4Q\mathbb N$. 
We first detect the largest $\mu$ such that $A_{\mu,Q}$
has full Lebesgue measure
using the Duffin-Schaeffer Theorem~\ref{thm:Duffin_Schaeffer}. 
Define
\begin{equation}
\psi_{\mu,Q}(q) = a\, \frac{ \mathbbm{1}_{ 4Q \mathbb N}(q)}{q^{\mu}}, 
\end{equation}
where $a> 0$ comes from the definition of $A_{\mu,Q}$ and 
$\mathbbm{1}_{ 4Q \mathbb N}(q)$ is the indicator function of $4Q\mathbb N$,
\begin{equation}
\mathbbm{1}_{ 4Q \mathbb N}(q) = 
\left\{ 
\begin{array}{ll}
1, & \text{ if } 4 Q \, \mid \, q, \\
0, & \text{ otherwise.}
\end{array}
\right.
\end{equation}
Then, we have $A_{\mu,Q} = A_{\psi_{\mu,Q}}$, where
\begin{equation}\label{eq:A_with_Psi}
A_{\psi_{\mu,Q}}
= \Big\{ \, t \in [0,1]  \, : \,   \Big| t - \frac{p}{q} \Big| \leq \psi_{\mu,Q}(q) \, \, \text{ for i. m. coprime pairs }  (p,q) \in  \mathbb N \times \mathbb N  \,  \Big\}
\end{equation}
has the form needed for the Duffin-Schaeffer Theorem~\ref{thm:Duffin_Schaeffer}. 
Indeed, the inclusion $\subset$ follows directly from the definition of $\psi_{\mu,Q}$. 
For the inclusion $\supset$, observe first that if $t \in A_{\psi_{\mu,Q}}$ with $\mu > 1$, then $t \not\in \mathbb Q$. 
Now, if a coprime pair $(p,q) \in \mathbb N^2$ satisfies $|t - p/q| \leq \psi_{\mu,Q}(q)$, then $q \in 4Q\mathbb N$ because otherwise we get the contradiction
\begin{equation}
0 < \Big| t - \frac{p}{q} \Big| \leq \psi_{\mu,Q}(q) = a \, \frac{\mathbbm{1}_{4Q\mathbb N}(q)}{q^\mu} = 0.
\end{equation}
In this setting, the Duffin-Schaeffer theorem says that 
$A_{\mu,Q}$ has Lebesgue measure 1  if and only if 
\begin{equation}\label{eq:The_Sum}
\sum_{q=1}^\infty \varphi(q) \,  \psi_{\mu,Q}(q) 
= \frac{a}{(4Q)^\mu} \, \sum_{n=1}^\infty \frac{\varphi(4Qn)}{ n^\mu } 
= \infty, 
\end{equation}
and has zero measure otherwise. 
Using this characterization, we prove now 
\begin{equation}\label{eq:CaseMu2}
| A_{\mu,Q} | = \left\{  \begin{array}{ll}
1, & \mu \leq 2, \\
0, & \mu > 2, 
\end{array}
\right.
\end{equation}
independently of $a$. 
To detect the critical $\mu=2$, 
trivially bound $\varphi(n) < n$ so that
\begin{equation}
	\sum_{n=1}^\infty \frac{\varphi(4Qn)}{ n^\mu } < \sum_{n=1}^\infty \frac{4Qn}{ n^\mu }
	= 4Q\, \sum_{n=1}^\infty \frac{1}{ n^{\mu-1} } < \infty, \qquad \text{ if } \, \, \mu > 2.
	\end{equation}
However, this argument fails when $\mu = 2$.
What is more, 
denote by $\mathbb P$ the set of primes so that
	\begin{equation}
		\sum_{n=1}^\infty \,  \frac{\varphi(4Qn)}{n^2} >  \sum_{p \in \mathbb P, \, p > 4Q} \,  \frac{\varphi(4Qp)}{p^2}
		\end{equation}		
	If $p \in \mathbb P$ and $p > 4Q$, then $\gcd (p,4Q) = 1$ because $p \nmid 4Q$ (for if $p \mid 4Q$ 		then $p \leq 4Q$). Therefore, $\varphi(4Qp) = \varphi (4Q) \, \varphi(p) = \varphi (4Q) \, (p-1) > \varphi (4Q) \, p/2$, so 
	\begin{equation}
		\sum_{n=1}^\infty \,  \frac{\varphi(4Qn)}{n^2} 
		> \frac{\varphi(4Q)}{2} \, \sum_{p \in \mathbb P, \, p > 4Q} \,  \frac{1}{p} = \infty,
	\end{equation}
	because the sum of the reciprocals of the prime numbers diverges\footnote{
This argument shows that the strategy used here to compute the dimension of $A_{\mu,\mathcal Q}$
also works if we restrict the denominators 
to the primes $\mathcal Q = \mathbb P$ in the first place. This situation arises when computing the spectrum of singularities of trajectories of polygonal lines with non-zero rational torsion, studied in \cite{BanicaVega2022}.}. 
The Duffin-Schaeffer Theorem~\ref{thm:Duffin_Schaeffer} thus implies that $ | A_{2,Q}| = 1$ and,
in particular, $\operatorname{dim}_{\mathcal H} A_{2,Q} = 1$.
From this we immediately get $ | A_{\mu,Q}| = 1$ when $\mu < 2$ because $A_{2,Q} \subset A_{\mu,Q}$.

Once we know \eqref{eq:CaseMu2},
 we use the Mass Transference Principle Theorem~\ref{thm:Mass_Transference_Principle}
to compute the dimension of $A_{\mu,Q}$ for $\mu > 2$.
Write first 
\begin{equation}
 A_{\mu,Q} = \limsup_{q \to \infty } \bigcup_{p \leq q, \, (p,q) = 1} B \Big( \, \frac{p}{q}, \psi_{\mu,Q}(q) \Big).
\end{equation}
Let $\beta = 2/\mu$  so that
\begin{equation}
\psi_{\mu,Q}(q)^\beta 
= \Big(a \, \frac{\mathbbm{1}_{4Q\mathbb N}(q)}{q^{\mu}} \Big)^\beta
= a^{\beta} \, \frac{\mathbbm{1}_{4Q\mathbb N}(q)}{q^{\mu \beta}}
= a^{2/\mu} \, \frac{\mathbbm{1}_{4Q\mathbb N}(q)}{q^2}
 = \psi_{2,Q}(q), 
\end{equation}
with a new underlying constant $a^{2/\mu}$.
Therefore, 
\begin{equation}
( A_{\mu,Q})^\beta
:= \limsup_{q \to \infty } \bigcup_{p \leq q, \, (p,q) = 1} B \Big( \,  \frac{p}{q}, \psi_{\mu,Q}(q)^\beta \Big)
= \limsup_{q \to \infty } \bigcup_{p \leq q, \, (p,q) = 1} B \Big( \,  \frac{p}{q}, \psi_{2,Q}(q) \Big)
= A_{2,Q}.
\end{equation}
Observe that $\beta$ is chosen to be the largest possible exponent that gives $ |  ( A_{\mu,Q})^\beta| = | (A_{\mu \beta,Q})| = 1$. 
Since \eqref{eq:CaseMu2} is independent of $a$, we get $ |  (A_{\mu,Q})^{2/\mu} | = | A_{2,Q} | = 1$, 
and the Mass Transference Principle Theorem~\ref{thm:Mass_Transference_Principle} implies that
$ \mathcal H^{2/\mu} \big(  A_{\mu,Q}  \big) =  \infty$. The proof is complete. 
\end{proof}

\section{Proof of Theorem~\ref{thm:Main_Theorem_Irrationals}: Spectrum of singularities when $x_0 \not\in \mathbb Q$}
\label{sec:Irrationals}

In this section we work with $x_0 \not\in \mathbb Q$ 
and prove Theorem~\ref{thm:Main_Theorem_Irrationals}.
Following the strategy for $x_0 \in \mathbb Q$, 
we first study the H\"older regularity at rational $t$ in Section~\ref{sec:x0_Irrational_Rationals}, 
and at irrational $t$ in Section~\ref{sec:x0_Irrational_Irrationals}

\subsection{Regularity at rational $\boldsymbol{t}$}\label{sec:x0_Irrational_Rationals}

Let $t = p/q$ an irreducible fraction. 
With Corollary~\ref{thm:CorollaryAsymptotic} in mind, 
we now have $x_q = \operatorname{dist} ( x_0, \mathbb Z/q  ) \neq 0$. 
Since $q$ is fixed, 
$\lim_{h \to 0} x_q / |h|^{1/2} = \infty$, so $F_\pm(x) = O(x^{-2})$ implies 
$F_\pm( x_q/\sqrt{|h|} ) \lesssim |h|/x_q^2 $ when $h \to 0$.
Also $|G(p,m_q,q)| \leq \sqrt{2q}$ for all $m_q$.
Hence, 
\begin{equation}
\Big| \, R_{x_0}\Big( \frac{p}{q} + h \Big) - R_{x_0}\Big( \frac{p}{q} \Big) + 2\pi i h \,  \Big| \lesssim  \left(  \frac{1}{\sqrt{q} \, x_q^2} + q^{3/2} \right) \, h^{3/2}.
\end{equation}
This regularity is actually the best we can get.
\begin{prop}
Let $x_0 \in \mathbb R \setminus \mathbb Q$ and let $t \in \mathbb Q$. 
Then,
$\alpha_{x_0}(t) = 3/2$. 
\end{prop}
We postpone the proof of $\alpha_{x_0}(t) \leq 3/2$ to Proposition~\ref{thm:Rationals32}.
In any case, this means that when $x_0 \notin \mathbb Q$, $R_{x_0}$ is more regular at rational points than when $x_0 \in \mathbb Q$.

\subsection{Regularity at irrational $\boldsymbol{t}$}
\label{sec:x0_Irrational_Irrationals}
Let now $t \notin \mathbb Q$. 
Again, we aim at an upper bound for $\alpha_{x_0}(t)$ that complements the lower bound in
Proposition~\ref{thm:Lower_Bound_For_Holder_Regularity}.
by approximating $t \not\in \mathbb Q$ by rationals $p_n/q_n$ and using the asymptotic behavior in Corollary~\ref{thm:CorollaryAsymptotic}. 
However, now $x_0 \not\in \mathbb Q$ implies $x_{q_n} \neq 0$, 
so we cannot directly assume $F_\pm(x_{q_n}/\sqrt{|h_{q_n}|}) \simeq F_\pm(0) \simeq 1$ anymore. 
Therefore, it is fundamental to understand the behavior of the quotient $x_{q_n}/\sqrt{|h_{q_n}|}$. 

\subsubsection{Heuristics}\label{sec:Heuristics}
Let $q \in \mathbb N$ and define the exponents $\mu_q$ and $\sigma_q$ as usual, 
\begin{equation}
x_q = \operatorname{dist} \Big( x_0, \frac{\mathbb Z}{ q} \Big) = \frac{1}{q^{\sigma_q}}, 
\qquad |h_q|  = \operatorname{dist} \Big( t, \frac{\mathbb Z}{ q} \Big) = \frac{1}{q^{\mu_q}}, 
\qquad \Longrightarrow \qquad 
\frac{x_q}{\sqrt{|h_q| }} = \frac{1}{q^{\sigma_q - \mu_q / 2}}. 
\end{equation}
If $\sigma_q - \mu_q/2 > c > 0$ holds for a sequence $q_n$, 
we should recover the behavior when $x_0 \in \mathbb Q$ because
\begin{equation}\label{eq:Heuristic_Argument_Of_F}
\lim_{n \to \infty} \big( \sigma_{q_n} - \frac{\mu_{q_n} }{2} \big) \geq c > 0 
\quad \Longrightarrow \quad 
\lim_{n \to \infty} \frac{x_{q_n}}{\sqrt{|h_{q_n}|}} = 0 
\quad \Longrightarrow \quad 
F_\pm\Big( \frac{x_{q_n}}{\sqrt{|h_{q_n}|}} \Big) \simeq F_\pm(0), \quad n \gg 1. 
\end{equation}
The main term in the asymptotic behavior for $R_{x_0} (t) - R_{x_0}(p_n/q_n)$
in Corollary~\ref{thm:CorollaryAsymptotic} would then be 
\begin{equation}
\text{ Main Term } = \frac{\sqrt{|h_{q_n}|}}{q_n} G(p_n,m_{q_n},q_n) F_\pm(0)
 \simeq \frac{\sqrt{|h_{q_n}|}}{\sqrt{q_n} }
 \simeq h_{q_n}^{\frac12 + \frac{1}{2\mu_{q_n}}}
\end{equation}
if we assume the necessary parity conditions so that $|G(p_n,m_{q_n},q_n)| \simeq \sqrt{q_n}$. 
Recalling the definition of the exponent of irrationality $\mu(\cdot)$ in  \eqref{eq:Exponent_Of_Irrationality},  
we may think of $\sigma_{q_n} \to \mu(x_0)$ and $\mu_{q_n} \to \mu(t)$, 
so these heuristic computations suggest that
$\alpha_{x_0}(t) \leq \frac12 + \frac{1}{2\mu(t)}$ for $t$ such that $\mu(t) \leq 2\mu(x_0)$. 
Since Proposition~\ref{thm:Lower_Bound_For_Holder_Regularity}
 gives $\alpha_{x_0}(t) \geq \frac12 + \frac{1}{2\mu(t)}$,
we may expect that
 \begin{equation}\label{eq:Heuristics_Irrationals}
 \alpha_{x_0}(t) = \frac12 + \frac{1}{2\mu(t)}, \qquad \text{ if } \quad 2 \leq \mu(t) \leq 2\mu(x_0),
 \end{equation}
or at least for a big subset of such $t$. 
 It is less clear what to expect when $\mu(t) > 2\mu(x_0)$, 
since \eqref{eq:Heuristic_Argument_Of_F}
need not hold. 
Actually, 
if $\sigma_{q_n} - \mu_{q_n}/2 < c < 0$ for all sequences, then since $F_\pm(x) = x^{-2} + O(x^{-4})$, 
\begin{equation}
\lim_{n \to \infty} \frac{x_{q_n}}{\sqrt{|h_{q_n}|}} 
= \lim_{n \to \infty} q_n^{\mu_{q_n}/2 - \sigma_{q_n}} = \infty 
\qquad \Longrightarrow \qquad 
F_\pm\Big( \frac{x_{q_n}}{\sqrt{|h_{q_n}|}} \Big) 
\simeq \frac{1}{q_n^{\mu_{q_n} - 2\sigma_{q_n}} } 
= |h_{q_n}|^{ 1 - \frac{2\sigma_{q_n}}{\mu_{q_n}} },
\end{equation}
which in turn would make the main term in $R_{x_0} (t) - R_{x_0}(p_n/q_n)$ be
\begin{equation}
\text{ Main Term } 
=  \frac{\sqrt{h_{q_n}}}{q_n} G(p_n,m_{q_n},q_n) F_\pm\Big( \frac{x_{q_n}}{\sqrt{|h_{q_n}|}} \Big)
\simeq h_{q_n}^{ \frac12 + \frac{1}{2\mu_{q_n} } } \, h_{q_n}^{ 1 - \frac{2\sigma_{q_n}}{\mu_{q_n}} }
\simeq h_{q_n}^{\frac32 - \frac{4\sigma_{q_n} - 1}{2\mu_{q_n}}},
\end{equation}
which corresponds to an exponent $\frac32 -  \frac{4\mu(x_0) - 1}{2\mu(t)}$. 
Together with lower bound in 
Proposition~\ref{thm:Lower_Bound_For_Holder_Regularity}, 
 we would get
$ \frac12 + \frac{1}{2\mu(t)} \leq \alpha_{x_0}(t) \leq \frac32 -  \frac{4\mu(x_0) - 1}{2\mu(t)} $, 
which leaves an open interval for $ \alpha_{x_0}(t) $. 

The main difficulty to materialize the ideas
leading to \eqref{eq:Heuristics_Irrationals}
is that we need the sequence $q_n$ to generate good approximations of both $x_0$ and $t$ simultaneously, 
which a priori may be not possible. 
In the following lines 
we show how we can partially dodge this problem 
to prove Theorem~\ref{thm:Main_Theorem_Irrationals}.

\subsubsection{Proof of Theorem~\ref{thm:Main_Theorem_Irrationals}}
Let $\sigma \geq 2$. 
Recalling the definition of the sets $A_{\mu, \mathcal Q}$ in \eqref{eq:A_Mu_Q}, define 
\begin{equation}
A_{\sigma,  \, \mathbb N \setminus 4\mathbb N} 
= \left\{ \, x \in [0,1] \, : \Big| x - \frac{b}{q} \Big| < \frac{1}{ q^\sigma} \text{ for infinitely many coprime pairs } (b,q) \in \mathbb N \times (\mathbb N \setminus 4\mathbb N) \,   \right\}.
\end{equation}
We first prove that the restriction in the denominators\footnote{This condition, which will be apparent later, comes from parity the conditions for the Gauss sums not to vanish.} 
does not affect the Hausdorff dimension.   
\begin{prop}\label{thm:Dimension_X_Mu}
Let $\sigma \geq 2$. Then, $\operatorname{dim}_{\mathcal H} A_{\sigma,  \, \mathbb N \setminus 4\mathbb N}  = 2/\sigma$. 
Moreover, 
$A_{2,  \, \mathbb N \setminus 4\mathbb N} = (0,1) \setminus \mathbb Q$, hence $|A_{2,  \, \mathbb N \setminus 4\mathbb N} | = 1$. 
If $\sigma > 2$, then $\mathcal H^{2/\sigma}(A_{\sigma,  \, \mathbb N \setminus 4\mathbb N} ) = \infty$ . 
\end{prop}
\begin{proof}
The proof for the upper bound for the Hausdorff dimension is standard. 
Writing 
\begin{equation}
A_{\sigma,  \, \mathbb N \setminus 4\mathbb N}
= \limsup_{   q \to \infty \, \, (q \not\in 4\mathbb N) } \bigcup_{ 1 \leq b < q, \, (b.q)=1 } B\Big( \frac{b}{q}, \frac{1}{q^\sigma} \Big) 
= \bigcap_{Q = 1}^\infty  \bigcup_{ q \geq Q, \,  q \not\in 4\mathbb N }  \Bigg(   \bigcup_{ 1 \leq b < q, \, (b.q)=1  } B\Big( \frac{b}{q}, \frac{1}{q^\sigma} \Big)  \Bigg), 
\end{equation} 
we get an upper bound for the Hausdorff measures using the canonical cover
\begin{equation}\label{eq:Canonical_Cover_Irrationals}
A_{\sigma,  \, \mathbb N \setminus 4\mathbb N}
 \subset \bigcup_{ q \geq Q, \, q \not\in 4\mathbb N }  \Big(   \bigcup_{ 1 \leq b < q } B\Big( \frac{b}{q}, \frac{1}{q^\sigma} \Big)  \Big), \quad \forall Q \in \mathbb N
\quad \Longrightarrow \quad 
\mathcal H ^\beta(A_{\sigma,  \, \mathbb N \setminus 4\mathbb N}) \leq \lim_{Q \to \infty} \sum_{ q \geq  Q } \frac{1}{q^{\sigma\beta - 1}}. 
\end{equation}
Thus, $\mathcal H^\beta(A_{\sigma,  \, \mathbb N \setminus 4\mathbb N}) = 0$ when $ \sigma\beta - 1 > 1$, and consequently 
$\operatorname{dim}_{\mathcal H} A_{\sigma,  \, \mathbb N \setminus 4\mathbb N} \leq 2/\sigma$. 

For the lower bound
we follow the procedure discussed in Section~\ref{sec:Diophantine_Approximation}, 
though unlike in the proof of Proposition~\ref{thm:Dimensions} we do not need the Duffin-Schaeffer theorem here. 
We first study the Lebesgue measure of $A_{\sigma,  \, \mathbb N \setminus 4\mathbb N}$.
From \eqref{eq:Canonical_Cover_Irrationals} with $\beta = 1$, 
we directly get $|A_{\sigma,  \, \mathbb N \setminus 4\mathbb N}|=0$ when $\sigma > 2$. 
When $\sigma = 2$, we get $A_{2,  \, \mathbb N \setminus 4\mathbb N} = A_2 = (0,1) \setminus \mathbb Q$. 
Indeed, if $b_n / q_n$ is the sequence of approximations by continued fractions of $x \in (0,1) \setminus \mathbb Q$,
two consecutive denominators $q_n$ and $q_{n+1}$ are never both even\footnote{If $x = [a_0; a_1, a_2, \ldots]$ is a continued fraction, then $q_0 =1$, $q_1 = a_1$ and $q_n = a_n q_{n-1} + q_{n-2}$ for $n \geq 2$. If $q_N$ and $q_{N+1}$ were both even for some $N$, then $q_{N-1}$ would also be, and by induction $q_0 =1$ would be even.}. 
This means that there is a subsequence 
$b_{n_k}/q_{n_k}$ such that $|x - b_{n_k}/q_{n_k}| < 1/q_{n_k}^2$ and $q_{n_k}$ is odd for all $k\in \mathbb N$. 
In particular, $q_{n_k} \not\in 4\mathbb N$, so 
$(0,1) \setminus \mathbb Q \subset A_{2,  \, \mathbb N \setminus 4\mathbb N}$. 
Hence, 
\begin{equation}\label{eq:X_Sigma_Lebesgue_Measure}
|A_{\sigma,  \, \mathbb N \setminus 4\mathbb N}| = \left\{  \begin{array}{ll}
1, & \sigma \leq 2, \\
0, & \sigma > 2,
\end{array}
\right.
\end{equation}
With this in hand, we use the Mass Transference Principle Theorem~\ref{thm:Mass_Transference_Principle}. 
For $\beta > 0$,  
\begin{equation}
(A_{\sigma,  \, \mathbb N \setminus 4\mathbb N})^\beta 
= \limsup_{  \substack{ q \to \infty \\ q \not\in 4\mathbb N } } \bigcup_{  1 \leq b < q, \, (b,q) = 1  } B\Big( \frac{b}{q}, \Big( \frac{1}{q^\sigma} \Big)^\beta \Big) 
= \limsup_{  \substack{ q \to \infty \\ q \not\in 4\mathbb N } } \bigcup_{ 1 \leq b < q, \, (b,q) = 1  } B\Big( \frac{b}{q},  \frac{1}{q^{\sigma \beta}}  \Big)
= A_{\sigma \beta,  \, \mathbb N \setminus 4\mathbb N}.
\end{equation}
Thus, choosing $\beta = 2/\sigma$ 
we get 
$ (A_{\sigma,  \, \mathbb N \setminus 4\mathbb N})^{2/\sigma} 
=
 A_{2,  \, \mathbb N \setminus 4\mathbb N}$, 
hence by \eqref{eq:X_Sigma_Lebesgue_Measure}
we get
$|(A_{\sigma,  \, \mathbb N \setminus 4\mathbb N})^{2/\sigma} |  = 1$. 
The Mass Transference Principle implies $\operatorname{dim}_{\mathcal H} A_{\sigma,  \, \mathbb N \setminus 4\mathbb N} \geq 2/\sigma$ and $\mathcal H^{2/\sigma} (A_{\sigma,  \, \mathbb N \setminus 4\mathbb N}) = \infty$. 
\end{proof}

Let $x_0 \in A_{\sigma,  \, \mathbb N \setminus 4\mathbb N}$. 
Then there exists a sequence of pairs $(b_n, q_n) \in \mathbb N \times (\mathbb N \setminus 4\mathbb N)$ such that 
$|x_0 - b_n/q_n| < 1/q_n^\sigma$ and moreover
$b_n/q_n$ are all approximations by continued fractions. 
Define 
\begin{equation}
\mathcal Q_{x_0} = \{ \, q_n \, : \, n \in \mathbb N \,   \}
\end{equation} 
to be the set of such denominators. 
This sequence exists because:
\begin{itemize}
	\item if $\sigma = 2$, there is a subsequence of continued fraction approximations with odd denominator, in particular with $q_n \not\in 4\mathbb N$.  
	\item if $\sigma > 2$, by definition there exist a sequence of pairs $(b_n, q_n) \in \mathbb N \times (\mathbb N \setminus 4\mathbb N)$ such that 
	\begin{equation}
\Big| x_0 - \frac{b_n}{q_n} \Big| < \frac{1}{q_n^\mu} \leq \frac{1}{2q_n^2}, \qquad \text{ for large enough } n \in \mathbb N. 
\end{equation}
By a theorem of Khinchin \cite[Theorem 19]{Khinchin1964}, 
all such $b_n/q_n$ are continued fraction approximations of $x_0$. 
\end{itemize}
Since all such $q_n$ are the denominators of continued fraction approximations, 
the sequence $q_n$ grows exponentially.\footnote{We actually have $q_n \geq 2^{n/2}$. 
To see this, rename this sequence as a subsequence $(b_{n_k}/q_{n_k})_k$ of the continued fraction convergents of $x_0$. By the properties of the continued fractions, $q_{n_k} \geq 2^{n_k/2}$. Since $n_k \geq k$, we get $q_{n_k} \geq 2^{k/2}$. }
Following again the notation in \eqref{eq:A_Mu_Q}
in Section~\ref{sec:Diophantine_Approximation}, 
for $\mu \geq 1$ and $0 < c < 1/2$, let\footnote{
 When $\mu = \infty$ the definition is adapted as usual as $A_{\infty,Q_{x_0}} = \cap_\mu A_{\mu,Q_{x_0}}$.
 Proofs for forthcoming results are written for $\mu < \infty$, but the simpler $\mu = \infty$ case is proved the same way we did in Section~\ref{sec:Rational_x0_Irrational_t}.} 
\begin{equation}
A_{\mu,\mathcal Q_{x_0}} 
 = \left\{  \, t \in [0,1] \, : \Big| t - \frac{p}{q} \Big| < \frac{c}{q^\mu} \text{ for infinitely many coprime pairs  } (p,q) \in  \mathbb N \times \mathcal Q_{x_0} \,    \right\}.
\end{equation}

\begin{prop}\label{thm:Dimension_A_Mu_Q_x0}
For $\mu \geq 1$, $\operatorname{dim}_{\mathcal H} (A_{\mu,\mathcal Q_{x_0}} ) =1/\mu$. 
\end{prop}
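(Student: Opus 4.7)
The proof will follow the template of Theorem~\ref{thm:Dimensions} in Section~\ref{sec:Diophantine_Approximation}: a canonical limsup covering for the upper bound, and the Mass Transference Principle Theorem~\ref{thm:Mass_Transference_Principle} together with the Duffin-Schaeffer Theorem~\ref{thm:Duffin_Schaeffer} for the lower bound. The only structural information about $\mathcal Q_{x_0}$ that will be used is its exponential growth $q_n \geq 2^{n/2}$, recorded in the footnote just before the statement.

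For the upper bound, the canonical cover
\begin{equation}
A_{\mu, \mathcal Q_{x_0}} \subset \bigcup_{n \,:\, q_n \geq Q_0} \, \bigcup_{\substack{1 \leq p \leq q_n \\ (p, q_n) = 1}} B\Big(\frac{p}{q_n}, \frac{c}{q_n^\mu}\Big), \qquad \forall Q_0 \in \mathbb N,
\end{equation}
gives $\mathcal H^\beta_\infty(A_{\mu, \mathcal Q_{x_0}}) \lesssim \sum_{n : q_n \geq Q_0} q_n^{1-\mu\beta}$, which for any $\beta > 1/\mu$ is the tail of a convergent geometric series by virtue of $q_n \geq 2^{n/2}$. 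Letting $Q_0 \to \infty$ yields $\mathcal H^\beta(A_{\mu, \mathcal Q_{x_0}}) = 0$ for every $\beta > 1/\mu$, whence $\operatorname{dim}_{\mathcal H} A_{\mu, \mathcal Q_{x_0}} \leq 1/\mu$.

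For the lower bound, I will apply the Mass Transference Principle to the limsup representation
\begin{equation}
A_{\mu, \mathcal Q_{x_0}} = \limsup_{n \to \infty} \bigcup_{\substack{1 \leq p \leq q_n \\ (p,q_n)=1}} B\Big(\frac{p}{q_n},\frac{c}{q_n^\mu}\Big)
\end{equation}
with dilation exponent $\alpha \in (0, 1/\mu)$, for which $(c/q_n^\mu)^\alpha = c^\alpha/q_n^{\mu\alpha}$ with $\mu\alpha < 1$, so that the dilated limsup equals $A_{\mu\alpha, \mathcal Q_{x_0}}$ up to an irrelevant constant. The key auxiliary claim is that $|A_{\beta, \mathcal Q_{x_0}}| = 1$ for every $\beta \in (0,1)$, which I would prove by applying the Duffin-Schaeffer Theorem~\ref{thm:Duffin_Schaeffer} to $\psi(q) = c\,\mathbbm{1}_{\mathcal Q_{x_0}}(q)/q^\beta$, in the same fashion as in \eqref{eq:A_with_Psi}: the classical Mertens bound $\varphi(q)/q \gtrsim 1/\log\log q$ combined with $q_n \to \infty$ gives
\begin{equation}
\frac{\varphi(q_n)}{q_n^\beta} = \frac{\varphi(q_n)}{q_n}\, q_n^{1-\beta} \gtrsim \frac{q_n^{1-\beta}}{\log\log q_n} \xrightarrow{\, n \to \infty\,} \infty
\end{equation}
because $\beta < 1$, so the general term of $\sum_q \varphi(q)\psi(q)$ already diverges. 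Once $|A_{\mu\alpha, \mathcal Q_{x_0}}| = 1$ is in hand, the Mass Transference Principle gives $\operatorname{dim}_{\mathcal H} A_{\mu, \mathcal Q_{x_0}} \geq \alpha$ for every $\alpha < 1/\mu$; letting $\alpha \nearrow 1/\mu$ concludes the proof.

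\textbf{Main subtlety.} The mildly delicate point is that one cannot in general apply the Mass Transference Principle at the critical exponent $\alpha = 1/\mu$: the full-measure statement $|A_{1, \mathcal Q_{x_0}}| = 1$ may fail if the partial quotients of $x_0$ are wild enough to make $q_n$ grow much faster than exponentially, in which case $\sum_n \varphi(q_n)/q_n$ would converge. This is circumvented by working with subcritical exponents $\mu\alpha < 1$, where divergence of $\sum_n \varphi(q_n)/q_n^{\mu\alpha}$ is automatic term-by-term and independent of the growth rate of $(q_n)$. Everything else is a direct translation of the scheme used for Theorem~\ref{thm:Dimensions}.
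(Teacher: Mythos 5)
Your proposal matches the paper's own argument step for step: the same canonical-cover upper bound exploiting $q_n \geq 2^{n/2}$, the same reduction via the Mass Transference Principle at subcritical dilation exponents $\mu\alpha < 1$, and the same Duffin--Schaeffer criterion reduced to showing the terms $\varphi(q_n)/q_n^\beta$ do not tend to zero when $\beta < 1$. The only cosmetic difference is the choice of classical totient lower bound (your $\varphi(q)/q \gtrsim 1/\log\log q$ versus the paper's $\varphi(n) \geq n^{1-\epsilon}$ for $n$ large), and your correct identification of why one must stay away from the critical exponent mirrors the paper's remark that $|A_{1,\mathcal Q_{x_0}}|$ cannot in general be determined.
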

\begin{proof}
As in the proof of Proposition~\ref{thm:Dimension_X_Mu}, 
the upper bound follows from the limsup expression
$ A_{\mu,\mathcal Q_{x_0}}  = \limsup_{n \to \infty} \bigcup_{  1 \leq p \leq q_n,  \,  (p,q_n) = 1 } B( p/q_n, c/q_n^\mu )$
and its canonical covering 
\begin{equation}\label{eq:Canonical_Covering_A_MuQx0}
A_{\mu,\mathcal Q_{x_0}}   \subset  \bigcup_{n \geq N}  \bigcup_{1 \leq p \leq q_n} B\Big( \frac{p}{q_n}, \, \frac{c}{q_n^\mu}  \Big), \quad \forall N \in \mathbb N
\quad \Longrightarrow \quad
\mathcal H^\beta \big( A_{\mu,\mathcal Q_{x_0}}   \big) \leq c^\beta \lim_{N \to \infty}  \sum_{n = N}^\infty \frac{1}{q_n^{\mu \beta - 1}}. 
\end{equation}
Since $q_n \geq 2^{n/2}$, the series converges if and only if $\mu \beta - 1 > 0$. 
Thus, $\mathcal H^\beta (A_{ \mu,\mathcal Q_{x_0}}) = 0$ for all $\beta > 1/\mu$, 
hence $\operatorname{dim}_{\mathcal H} (A_{\mu,\mathcal Q_{x_0}}) \leq 1/\mu$. 

For the lower bound we follow again the procedure in Section~\ref{sec:Diophantine_Approximation}. 
First we compute the Lebesgue measure of $A_{\mu,\mathcal Q_{x_0}}$.
From \eqref{eq:Canonical_Covering_A_MuQx0} with $\beta = 1$ 
we get $|A_{\mu,\mathcal Q_{x_0}}| = 0$ if $\mu > 1$. 
When $\mu \leq 1$, we need the full strength of the Duffin-Schaeffer theorem proved by Koukoulopoulos and Maynard \cite{KoukoulopoulosMaynard2020} (see Theorem~\ref{thm:Duffin_Schaeffer} in this paper).
Indeed, we have $|A_{\mu,\mathcal Q_{x_0}}| = 1$ if and only if
$\sum_{n=1}^\infty \varphi(q_n)/q_n^\mu = \infty$, 
and otherwise $|A_{\mu,\mathcal Q_{x_0}}| = 0$.
If $\mu < 1$, we use one of the classic properties of Euler's totient function, 
namely that for $\epsilon = (1-\mu)/2 > 0$ there exists $N \in \mathbb N$
such that $\varphi(n) \geq n^{1-\epsilon}$ for all $n \geq N$. 
In particular, there exists $K \in \mathbb N$ such that 
\begin{equation}
\sum_{n=1}^\infty \frac{\varphi(q_n)}{q_n^\mu} 
\geq 
\sum_{n=K}^\infty \frac{\varphi(q_n)}{q_n^\mu} 
\geq 
\sum_{n=K}^\infty q_n^{1- \mu - \epsilon}
\geq 
\sum_{n=K}^\infty 1 
= \infty, 
\end{equation} 
so $|A_{\mu,\mathcal Q_{x_0}}| = 1$ if $\mu < 1$. 
For $\mu=1$, none of these arguments work, 
and we need to know the behavior of $\varphi(q_n)$ for $q_n \in \mathcal Q_{x_0}$, of which we have little control.
So independently of $c > 0$, 
\begin{equation}\label{eq:Measure_A_Mu_Q}
|A_{\mu,\mathcal Q_{x_0}}| = 
\left\{  \begin{array}{ll}
1, & \mu < 1, \\
?, & \mu = 1, \\
0, & \mu > 1. 
\end{array}
\right.
\end{equation}
Even not knowing $|A_{1,\mathcal Q_{x_0}}| $, 
the Mass Transference Principle Theorem~\ref{thm:Mass_Transference_Principle}
allows us to compute 
the Hausdorff dimension of $A_{\mu,\mathcal Q_{x_0}}$ 
from \eqref{eq:Measure_A_Mu_Q}. 
As usual,  
dilate the set with an exponent $\beta>0$:
\begin{equation}
(A_{\mu,\mathcal Q_{x_0}})^\beta 
= \limsup_{n \to \infty} \bigcup_{  1 \leq p \leq q_n } B \Big( \frac{p}{q_n}, \Big( \frac{c}{q_n^\mu} \Big)^\beta  \Big)
= \limsup_{n \to \infty} \bigcup_{1 \leq p \leq q_n } B \Big( \frac{p}{q_n}, \frac{c^\beta}{q_n^{\mu\beta}}  \Big)
= A_{\mu \beta,\mathcal Q_{x_0}}, 
\end{equation} 
with a new constant $c^\beta$.  
Since \eqref{eq:Measure_A_Mu_Q} is independent of $c$,  
we have $|(A_{\mu,\mathcal Q_{x_0}})^\beta| = |A_{\mu \beta,\mathcal Q_{x_0}}| = 1$
if $\mu \beta < 1$, 
and the Mass Transference Principle implies 
$\operatorname{dim}_{\mathcal H} A_{\mu,\mathcal Q_{x_0}} \geq \beta$. 
Taking $\beta \to 1/\mu$, we deduce   
$\operatorname{dim}_{\mathcal H} A_{\mu,\mathcal Q_{x_0}} \geq 1/\mu$. 
\end{proof}

As in Proposition~\ref{thm:Holder_Regularity_In_Bmu} and in the definition of $B_{\mu,\mathcal Q}$ in \eqref{eq:B_Mu_Q}, 
to get information about $\alpha_{x_0}(t)$ for $t \in A_{\mu,\mathcal Q_{x_0}}$
we need to restrict their exponent of irrationality.  
We do this by removing sets $A_{\mu + \epsilon}$ defined in \eqref{eq:A_mu_Intro}. 
However, compared to Proposition~\ref{thm:Holder_Regularity_In_Bmu} we have two fundamental difficulties:
\begin{enumerate}
	\item[(a)] The dimensions $\operatorname{dim}_{\mathcal H} A_\mu = 2/\mu > 1/\mu =  \operatorname{dim}_{\mathcal H} A_{\mu,\mathcal Q_{x_0}}$  do not match anymore.  
	\item[(b)] Because do not know the Lebesgue measure of $A_{1,\mathcal Q_{x_0}}$ 
in \eqref{eq:Measure_A_Mu_Q}, 
we cannot conclude that 
$\mathcal H^{1/\mu} (A_{\mu,\mathcal Q_{x_0}}) = \infty$ if $\mu > 1$. 
\end{enumerate}
To overcome these difficulties, 
let $\delta_1, \delta_2 > 0$ and define the set 
\begin{equation}\label{eq:B_Mu_Q_x0}
B_{\mu, \mathcal Q_{x_0}} ^{\delta_1, \delta_2}
= \Big(  A_{\mu, \mathcal Q_{x_0}} \setminus A_{\mu + \delta_1,\mathcal Q_{x_0}}  \Big) 
 \setminus \Big( \bigcup_{\epsilon >0} A_{2\mu + \delta_2 + \epsilon} \Big).
\end{equation}

\begin{rem}[Explanation of the definition of $B_{\mu, \mathcal Q_{x_0}} ^{\delta_1, \delta_2}$]\label{rmk:Good_Diophantine_Set}
The role of $\delta_2$ is to avoid the problem (b) above, 
while $\delta_1$ has a technical role when controlling the behavior of $F_\pm(x_{q_n}/\sqrt{h_{q_n}})$ in \eqref{eq:Controlling_F}.
Last, 
we remove $A_{2\mu+\epsilon}$ instead of $A_{\mu + \epsilon}$
to avoid problem (a) and to ensure that $B_{\mu, \mathcal Q_{x_0}} ^{\delta_1, \delta_2}$ is not too small.
The downside of this is that we can only get $\mu(t) \in [\mu, 2\mu + \delta_2]$ for the exponent of irrationality of $t \in B_{\mu, \mathcal Q_{x_0}} ^{\delta_1, \delta_2}$. 
If instead we worked with the set
\begin{equation}\label{eq:B_s_Q_x0_Tilde}
\widetilde B_{\mu, \mathcal Q_{x_0}}^{\delta_1}
=  \Big(  A_{\mu, \mathcal Q_{x_0}} \setminus A_{\mu + \delta_1,\mathcal Q_{x_0}}  \Big)  \setminus \Big( \bigcup_{\epsilon >0} A_{\mu + \epsilon} \Big)
\end{equation}
we would deduce $\mu(t) =\mu$ and therefore $\alpha_{x_0}(t) = 1/2 + 1/(2\mu)$. 
However, we do not know how to compute the dimension of $\widetilde B^{\delta_1}_{\mu, \mathcal Q_{x_0}}$.

\end{rem}

\begin{prop} \label{thm:Holder_Regularity_For_Irrationals}
Let $\mu \geq 1$. Then, 
\begin{enumerate}
	\item [(a)] $\operatorname{dim}_{\mathcal H} B^{\delta_1, \delta_2}_{\mu, \mathcal Q_{x_0}} = 1/\mu$. 
	\item [(b)] If $t \in B^{\delta_1, \delta_2}_{\mu, \mathcal Q_{x_0}}$, then $\alpha_{x_0}(t) \geq \frac12 + \frac{1}{4\mu + 2\delta_2}$. 

	\item[(c)] If $2 \leq \mu < 2\sigma - \delta_1$ and  $t \in B^{\delta_1, \delta_2}_{\mu, \mathcal Q_{x_0}}$, then $\alpha_{x_0}(t) \leq \frac12 + \frac{1}{2\mu}$.
\end{enumerate}
\end{prop}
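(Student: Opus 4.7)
Part~(a) will follow by combining Proposition~\ref{thm:Dimension_A_Mu_Q_x0} with standard limsup arithmetic. The upper bound is immediate from $B^{\delta_1,\delta_2}_{\mu,\mathcal Q_{x_0}} \subset A_{\mu,\mathcal Q_{x_0}}$. For the lower bound, I will use that the proof of Proposition~\ref{thm:Dimension_A_Mu_Q_x0} yields via the Mass Transference Principle not merely $\dim_{\mathcal H}A_{\mu,\mathcal Q_{x_0}}=1/\mu$ but also $\mathcal H^{\beta}(A_{\mu,\mathcal Q_{x_0}}) = \infty$ for every $\beta < 1/\mu$ (apply the principle with exponent $\beta$ so that $(A_{\mu,\mathcal Q_{x_0}})^{\beta} = A_{\mu\beta,\mathcal Q_{x_0}}$ has full Lebesgue measure whenever $\mu\beta<1$). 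The hypotheses $\delta_1,\delta_2 > 0$ give the strict inequalities $1/(\mu+\delta_1) < 1/\mu$ and $2/(2\mu+\delta_2) < 1/\mu$, so one can pick $\beta \in (\max(1/(\mu+\delta_1),\, 2/(2\mu+\delta_2)),\; 1/\mu)$. For such a $\beta$, Proposition~\ref{thm:Dimension_A_Mu_Q_x0} and the Jarnik--Besicovitch Theorem~\ref{thm:Jarnik_Besicovitch} give $\mathcal H^\beta(A_{\mu+\delta_1,\mathcal Q_{x_0}})=0$ and $\mathcal H^\beta(A_{2\mu+\delta_2+1/n})=0$ for every $n\in\mathbb N$, and hence, by countable subadditivity, of $\bigcup_\epsilon A_{2\mu+\delta_2+\epsilon}$. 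Subtracting two sets of zero $\mathcal H^\beta$-measure from a set of infinite $\mathcal H^\beta$-measure leaves infinite measure, so $\dim_{\mathcal H}B^{\delta_1,\delta_2}_{\mu,\mathcal Q_{x_0}} \geq \beta$. Letting $\beta \nearrow 1/\mu$ completes part~(a).

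Part~(b) is a direct translation. Any $t\in B^{\delta_1,\delta_2}_{\mu,\mathcal Q_{x_0}}$ lies outside $A_{2\mu+\delta_2+\epsilon}$ for every $\epsilon>0$, so $\mu(t)\leq 2\mu+\delta_2$, and Proposition~\ref{thm:Lower_Bound_For_Holder_Regularity} gives $\alpha_{x_0}(t)\geq \tfrac12+\tfrac{1}{2\mu(t)}\geq \tfrac12+\tfrac{1}{4\mu+2\delta_2}$.

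The heart of the argument is part~(c). Since $t\in A_{\mu,\mathcal Q_{x_0}}\setminus A_{\mu+\delta_1,\mathcal Q_{x_0}}$, there is a sequence of coprime pairs $(p_n,q_n)\in\mathbb N\times\mathcal Q_{x_0}$ such that the errors $h_n:=t-p_n/q_n$ satisfy $c/q_n^{\mu+\delta_1}\leq |h_n|\leq c/q_n^{\mu}$ for $n\gg 1$. The decisive point is that choosing $q_n$ inside $\mathcal Q_{x_0}$ couples an approximation of $t$ with a good approximation of $x_0$: by the very construction of $\mathcal Q_{x_0}$, $x_{q_n}=\operatorname{dist}(x_0,\mathbb Z/q_n) \leq 1/q_n^\sigma$. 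Under the standing assumption $\mu<2\sigma-\delta_1$, the ratio
\begin{equation}
\frac{x_{q_n}}{\sqrt{|h_n|}} \;\leq\; \frac{q_n^{-\sigma}}{\sqrt{c}\,q_n^{-(\mu+\delta_1)/2}} \;=\; \frac{q_n^{(\mu+\delta_1)/2-\sigma}}{\sqrt{c}} \;\longrightarrow\; 0,
\end{equation}
so continuity of $F_{\pm}$ at $0$ together with $F_{\pm}(0)\neq 0$ (Proposition~\ref{thm:AsymototicRationals}) gives $|F_{\pm}(x_{q_n}/\sqrt{|h_n|})|\asymp 1$ for large $n$. Since $q_n\in\mathcal Q_{x_0}\subset\mathbb N\setminus 4\mathbb N$ and $p_n/q_n$ are continued fraction convergents, after passing to a further subsequence I may assume $q_n$ is odd (two consecutive continued-fraction denominators of $x_0$ cannot both be even, as in the proof of Proposition~\ref{thm:Dimension_X_Mu}), which forces $|G(p_n,m_{q_n},q_n)|\asymp \sqrt{q_n}$. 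Plugging these two facts into Corollary~\ref{thm:CorollaryAsymptotic}, the main term has size $\asymp \sqrt{|h_n|}/\sqrt{q_n}$; because $\mu\geq 2$ ensures $q_n^2|h_n|\leq c$, both the linear term $2\pi i h_n$ and the error $O(q_n^{3/2}|h_n|^{3/2})$ are absorbed, as in the proof of Proposition~\ref{thm:Upper_Bound_For_Holder_regularity}. Finally, $|h_n|\leq c/q_n^\mu$ gives $\sqrt{|h_n|}/\sqrt{q_n}\gtrsim |h_n|^{1/2+1/(2\mu)}$, so
\begin{equation}
\bigl|R_{x_0}(p_n/q_n)-R_{x_0}(t)\bigr| \;\gtrsim\; |h_n|^{\frac12+\frac{1}{2\mu}} \qquad \text{for infinitely many } n,
\end{equation}
which, since $h_n\to 0$, rules out any local H\"older exponent strictly above $\tfrac12+\tfrac{1}{2\mu}$ and yields (c).

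The main obstacle is exactly the coupling step: one must ensure that the same sequence of denominators $q_n$ produces good approximations of \emph{both} $t$ and $x_0$, and that the Gauss sum along this sequence does not vanish. The definition of $\mathcal Q_{x_0}$ handles the former, while the restriction $\mathbb N\setminus 4\mathbb N$ together with the parity subsequence extraction handles the latter; the precise quantitative coupling $x_{q_n}/\sqrt{|h_n|}\to 0$ is what makes the hypothesis $\mu<2\sigma-\delta_1$ the natural threshold for the argument.
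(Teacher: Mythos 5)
Parts (a) and (b) are correct and follow the paper's own route almost verbatim: upper bound from the inclusion $B^{\delta_1,\delta_2}_{\mu,\mathcal Q_{x_0}}\subset A_{\mu,\mathcal Q_{x_0}}$, lower bound by picking $\beta$ strictly between $\max(1/(\mu+\delta_1),\,1/(\mu+\delta_2/2))$ and $1/\mu$ and using that at level $\beta$ the set $A_{\mu,\mathcal Q_{x_0}}$ has infinite $\mathcal H^\beta$-measure while the two sets being removed have $\mathcal H^\beta$-measure zero; and (b) is exactly the paper's two-line argument. Your computation of $x_{q_n}/\sqrt{|h_n|}\to 0$ in part (c) is in fact a small simplification of the paper's: you use the lower bound $|h_n|\geq c/q_n^{\mu+\delta_1}$ directly instead of introducing the auxiliary exponents $\mu_{n_k}$ and an extra $\epsilon$, which is cleaner.

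There is, however, a genuine gap in part (c), namely the sentence in which you "pass to a further subsequence" so that all $q_n$ are odd, justified by the fact that two consecutive continued-fraction denominators of $x_0$ cannot both be even. That property only tells you that among \emph{all} continued-fraction denominators of $x_0$ there are infinitely many odd ones; it says nothing about the particular sparse subsequence $(q_n)$ you are working with, which is already a sub-subsequence: first cut down from the full convergent sequence to $\mathcal Q_{x_0}$ (those denominators $\not\in 4\mathbb N$ that give $|x_0-b/q|<1/q^\sigma$), and then cut down again to the $q\in\mathcal Q_{x_0}$ witnessing $t\in A_{\mu,\mathcal Q_{x_0}}\setminus A_{\mu+\delta_1,\mathcal Q_{x_0}}$. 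Nothing prevents every $q_n$ from being $\equiv 2\pmod 4$, in which case your odd subsequence does not exist and the Gauss-sum estimate is not established. The fix is to drop the extraction entirely and argue directly, as the paper does, that $|G(p_n,m_{q_n},q_n)|\geq\sqrt{q_n}$ for every $q_n\not\in 4\mathbb N$: if $q_n$ is odd the Gauss sum is automatically nonzero of modulus $\sqrt{q_n}$; if $q_n\equiv 2\pmod 4$, then $q_n/2$ is odd, and since $m_{q_n}/q_n$ is a lowest-terms continued fraction convergent of $x_0$ with $q_n$ even, $m_{q_n}$ is odd, so $q_n/2\equiv m_{q_n}\pmod 2$ and the characterization \eqref{eq:GaussSumCharacterization} gives $|G|=\sqrt{2q_n}$. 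This is precisely why $\mathcal Q_{x_0}$ is built inside $\mathbb N\setminus 4\mathbb N$ rather than inside the odd integers.
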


\begin{proof}[Proof of Proposition~\ref{thm:Holder_Regularity_For_Irrationals}]
$(a)$  
The inclusion $B^{\delta_1, \delta_2}_{\mu, \mathcal Q_{x_0}} \subset A_{\mu, \mathcal Q_{x_0}}$ directly implies 
$ \operatorname{dim}_{\mathcal H} B^{\delta_1, \delta_2}_{\mu, \mathcal Q_{x_0}} \leq 1/\mu$. 
We prove the lower bound following the proof of Proposition~\ref{thm:Dimensions} in a few steps:
\begin{itemize}
	
	\item[(a.1)] Since $\operatorname{dim}_{\mathcal H}  A_{\mu + \delta_1, \mathcal Q_{x_0} } = 1/(\mu + \delta_1) < 1/\mu$, 
	we have $\operatorname{dim}_{\mathcal H}  ( A_{\mu, \mathcal Q_{x_0}} \setminus A_{\mu + \delta_1, \mathcal Q_{x_0} } ) = 1/\mu$.

	\item[(a.2)] The sets $A_\mu$ are nested, 
so by the Jarnik-Besicovitch Theorem~\ref{thm:Jarnik_Besicovitch}
\begin{equation}\label{eq:Dimension_With_Delta}
\operatorname{dim}_{\mathcal H}  \Big(\bigcup_{\epsilon > 0} A_{2\mu + \delta_2 + \epsilon} \Big)
= \sup_{n \in \mathbb N} \left\{ \operatorname{dim}_{\mathcal H} \Big( A_{2\mu + \delta_2 + \frac{1}{n}}  \Big) \right\}
= \sup_{n \in \mathbb N}  \frac{2}{2\mu + \delta_2 + \frac{1}{n}} 
= \frac{1}{\mu + \delta_2 / 2}. 
\end{equation}
Moreover, 
$\mathcal H^\gamma  \big(\bigcup_{\epsilon > 0} A_{2\mu + \delta_2 + \epsilon} \big) 
= \lim_{n  \to \infty} \mathcal H^\gamma \big( A_{2\mu + \delta_2 + 1/n} \big) = 0$ for all $\gamma \geq 1/(\mu + \delta_2/2)$.

\end{itemize}
Take $\gamma$ such that 
$1/ (\mu + \delta_2 /2) < \gamma < 1/\mu$. 
From (a.1)
we get $\mathcal H^{\gamma}  ( A_{\mu, \mathcal Q_{x_0}} \setminus A_{\mu + \delta_1, \mathcal Q_{x_0} } ) = \infty$, 
and from (a.2) we have 
$\mathcal H^{\gamma} \big(\bigcup_{\epsilon > 0} A_{2\mu + \delta_2 + \epsilon} \big) = 0 $,  
so
\begin{equation}
\mathcal H^\gamma (B^{\delta_1, \delta_2}_{\mu, \mathcal Q_{x_0}})
= \mathcal H^\gamma  ( A_{\mu, \mathcal Q_{x_0}} \setminus A_{\mu + \delta_1, \mathcal Q_{x_0} } )- \mathcal H^\gamma \Big(\bigcup_{\epsilon > 0} A_{2\mu + \delta  + \epsilon} \Big) > 0.
\end{equation}
Consequently $\operatorname{dim}_{\mathcal H} B^{\delta_1,\delta_2}_{\mu, \mathcal Q_{x_0}} \geq \gamma$,
and taking $\gamma \to 1/\mu$ we conclude  $\operatorname{dim}_{\mathcal H} B^{\delta_1,\delta_2}_{\mu, \mathcal Q_{x_0}} \geq 1/\mu$.

$(b)$ Let $t \in B^{\delta_1, \delta_2}_{\mu, \mathcal Q_{x_0}}$. 
Then,
$t \notin \bigcup_{\epsilon > 0} A_{2\mu + \delta_2 + \epsilon}$ implies $\mu(t) \leq 2\mu + \delta_2$, 
where $\mu(t)$ is the exponent of irrationality of $t$. 
Combining this with Proposition~\ref{thm:Lower_Bound_For_Holder_Regularity} we get
$ \alpha_{x_0}(t) \geq \frac12 + \frac{1}{2\mu(t)} \geq \frac12 + \frac{1}{4\mu + 2\delta_2}$.

$(c)$ Let $t \in B^{\delta_1, \delta_2}_{\mu, \mathcal Q_{x_0}}$. 
Since $t \in  A_{\mu, \mathcal Q_{x_0}} \setminus A_{\mu + \delta_1, \mathcal Q_{x_0} } $, 
there is a subsequence of denominators $(q_{n_k})_k  \subset \mathcal Q_{x_0}$ 
such that $c/q_{n_k}^{\mu + \delta_1} \leq \big|  t - p_{n_k}/q_{n_k} \big| < c/q_{n_k}^\mu$ for $k \in \mathbb N$. 
Define the errors $h_{n_k}$ and $x_{n_k}$, and the exponent $\mu_{n_k}$ as
\begin{equation}\label{eq:Errors_Irrationals}
h_{n_k} =  t - \frac{p_{n_k}}{q_{n_k}}, 
\qquad 
|h_{n_k}| 
= \frac{1}{q_{n_k}^{\mu_{n_k}}}
\qquad  \text{ and } \qquad 
x_{n_k} = \Big| x_0 - \frac{b_{n_k}}{q_{n_k}} \Big| < \frac{1}{q_{n_k}^{\sigma}}. 
\end{equation}
From the condition above, since $c < 1$, 
we immediately get that for any $\epsilon >0$,
\begin{equation}\label{eq:s_n_k_For_Large_k}
\mu < \mu_{n_k} \leq \mu + \delta_1 + \epsilon, \qquad \forall k \gg_\epsilon 1. 
\end{equation}
By the asymptotic expansion in Corollary~\ref{thm:CorollaryAsymptotic}, we have 
\begin{equation}\label{eq:Main_And_Error_Irrationals}
R_{x_0}(t) - R_{x_0}\Big( \frac{p_{n_k}}{q_{n_k}}  \Big) 
= \frac{|h_{n_k}|^{1/2}}{q_{n_k}} \, G(p_{n_k}, b_{n_k}, q_{n_k}) \, F_\pm \Big(  \frac{x_{n_k}}{\sqrt{h_{n_k}}}  \Big) - 2\pi i h_{n_k} 
+ \text{Error},
\end{equation}
where $\text{Error} = O\Big( \min \big( q_{n_k}^{3/2}\, h_{n_k}^{3/2}, q_{n_k}^{1/2} \, h_{n_k} \big) \Big)$. 
Let us treat the elements in this expression separately.
\begin{itemize}
	\item Since $q_{n_k} \not\in 4\mathbb N$, we have $|G(p_{n_k},b_{n_k}, q_{n_k})| \geq \sqrt{q_{n_k}}$ for $k \in \mathbb N$. 
	Indeed, if $q_{n_k}$ is odd, then $|G(p_{n_k},b_{n_k}, q_{n_k})| = \sqrt{q_{n_k}}$.  
	If $q_{n_k} \equiv 2 \pmod{4}$, then $b_{n_k}$ is odd, so $q_{n_k}/2 \equiv b_{n_k} \pmod{2}$ and hence
$|G(p_{n_k},b_{n_k}, q_{n_k})| = \sqrt{2 q_{n_k}}$. 
	 Also, by \eqref{eq:Errors_Irrationals} and  \eqref{eq:s_n_k_For_Large_k},
	\begin{equation}\label{eq:Controlling_F}
	\frac{x_{n_k}}{\sqrt{|h_{n_k}|}}  = x_{n_k} \, q_{n_k}^{\mu_{n_k}/2} 
	< \frac{q_{n_k}^{\mu_{n_k}/2}}{q_{n_k}^\sigma}
	\leq \frac{q_{n_k}^{ \frac{\mu}{2} + \frac{\delta_1}{2} + \frac{\epsilon}{2}}}{q_{n_k}^\sigma}
	=   \frac{1}{q_{n_k}^{\sigma - \frac{\mu}{2} - \frac{\delta_1}{2} - \frac{\epsilon}{2}}}. 
	\end{equation}
	Hence, if $2\sigma > \mu + \delta_1$, we can choose $\epsilon = \sigma - \mu/2 - \delta_1 / 2 > 0$ and we get 
	\begin{equation}
	\lim_{k \to \infty} \frac{x_{n_k}}{\sqrt{|h_{n_k}|}}  
	\leq \lim_{k \to \infty} \frac{1}{q_{n_k}^{\sigma - \mu/2 - \delta_1/2  - \epsilon / 2}} 
	=  \lim_{k \to \infty} \frac{1}{q_{n_k}^{(\sigma - \mu/2 - \delta_1/2)/ 2}} 
	= 0.
	\end{equation}
	Since $F_\pm$ is continuous, we get $	| F_\pm (  x_{n_k}/|h_{n_k}|^{1/2} ) | \geq |F_\pm(0)| / 2 \simeq 1$ for all $ k \gg 1$. 
	Therefore, 
	\begin{equation}
	\text{Main term} 
	= \Big| \frac{\sqrt{|h_{n_k}|}}{q_{n_k}} \, G(p_{n_k}, b_{n_k}, q_{n_k}) \, F \Big(  \frac{x_{n_k}}{|h_{n_k}|^{1/2}}  \Big) \Big|
	\simeq \frac{\sqrt{|h_{n_k}|}}{\sqrt{q_{n_k}}}, \qquad \forall k \gg 1. 
	\end{equation}
	
	\item The term $2\pi i h_{n_k}$ is absorbed by the Main Term if $ |h_{n_k}| \ll \sqrt{|h_{n_k}|}/\sqrt{q_{n_k}}$, which is equivalent to $|h_{n_k}| \ll 1/q_{n_k}$.  
	If $ \mu > 1$, we get precisely $|h_{n_k}| < c/q_{n_k}^\mu \ll 1/q_{n_k}$. 

	\item Regarding the error term, we can write
	\begin{equation}
		 q_{n_k}^{1/2} |h_{n_k}| = \frac{\sqrt{|h_{n_k}|}}{\sqrt{q_{n_k}}} \, (q_{n_k}^2 |h_{n_k}|)^{1/2}, 
		 \qquad 
		 q_{n_k}^{3/2} |h_{n_k}|^{3/2} = \frac{\sqrt{|h_{n_k}|}}{\sqrt{q_{n_k}}} \, q_{n_k}^2 |h_{n_k}|. 	
\end{equation}		
Since $\text{Error} \leq C \, \min \big( q_{n_k}^{3/2}\, |h_{n_k}|^{3/2}, q_{n_k}^{1/2} \, |h_{n_k}| \big) $ for some $C > 0$, 
the error is absorbed by the Main Term if $q_{n_k}^2\, |h_{n_k}| \leq c$ for a small enough, but universal constant $c$. 
Choosing $c>0$ in the definition of $A_{\mu,\mathcal Q_{x_0}}$,  
the condition $|h_{n_k}| \leq c/q_{n_k}^\mu \leq c/q_{n_k}^2$ is satisfied if $\mu \geq 2$. 
\end{itemize}
Hence, if $2 \leq \mu  < 2\sigma - \delta_1$
and $t \in B^{\delta_1,\delta_2}_{\mu,\mathcal Q_{x_0}}$, then
$| R_{x_0}(t) - R_{x_0}( p_{n_k}/q_{n_k} ) | \gtrsim \sqrt{|h_{n_k}|}/\sqrt{q_{n_k}}$ for all $k \gg 1$. 
From \eqref{eq:s_n_k_For_Large_k} we have $1/\sqrt{ q_{n_k} } = |h_{n_k}|^{1/(2\mu_{n_k})} > |h_{n_k}|^{1/(2\mu)}$, so
$| R_{x_0}(t) - R_{x_0}( p_{n_k}/q_{n_k}  ) | \gtrsim |h_{n_k}|^{\frac12 + \frac{1}{2\mu}}$ for all $k \gg 1$, 
which implies $\alpha_{x_0}(t) \leq \frac12 + \frac{1}{2\mu}$.
\end{proof}

From Proposition~\ref{thm:Holder_Regularity_For_Irrationals} we can deduce the main part of Theorem~\ref{thm:Main_Theorem_Irrationals}. 
\begin{thm}\label{thm:Theorem_For_Irrationals}
Let $\sigma \geq 2$ and let $x_0 \in A_{\sigma,  \, \mathbb N \setminus 4\mathbb N}$. 
Let $2 \leq  \mu  < 2\sigma$. Then, for all $\delta > 0$, 
\begin{equation}
\frac{1}{\mu} \leq \operatorname{dim}_{\mathcal H} \left\{   \, t \, : \frac12 + \frac{1}{4\mu} - \delta  \leq \alpha_{x_0}(t) \leq \frac12 + \frac{1}{2\mu}    \right\}  \leq \frac{2}{\mu}.
\end{equation}
\end{thm}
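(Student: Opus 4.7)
The plan is to obtain the lower and upper bounds independently, the former from Proposition~\ref{thm:Holder_Regularity_For_Irrationals} and the latter from the pointwise estimate in Proposition~\ref{thm:Lower_Bound_For_Holder_Regularity} combined with the Jarn\'ik--Besicovitch theorem.

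For the lower bound I would exhibit a subset of the target set with Hausdorff dimension $1/\mu$. The natural candidate is the set $B^{\delta_1,\delta_2}_{\mu,\mathcal Q_{x_0}}$ from \eqref{eq:B_Mu_Q_x0}, with $\delta_1,\delta_2>0$ chosen so that parts (b) and (c) of Proposition~\ref{thm:Holder_Regularity_For_Irrationals} simultaneously apply. The hypothesis $\mu<2\sigma$ leaves a margin, so I can first fix $\delta_1>0$ small enough that $\mu<2\sigma-\delta_1$, which activates part~(c) and yields $\alpha_{x_0}(t)\leq \tfrac12+\tfrac1{2\mu}$. For the prescribed $\delta>0$, I can then pick $\delta_2>0$ satisfying $\tfrac1{4\mu+2\delta_2}\geq \tfrac1{4\mu}-\delta$, so that part~(b) gives $\alpha_{x_0}(t)\geq \tfrac12+\tfrac1{4\mu}-\delta$. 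These two inclusions place $B^{\delta_1,\delta_2}_{\mu,\mathcal Q_{x_0}}$ inside the target set, and part~(a) of the same proposition identifies its Hausdorff dimension as exactly $1/\mu$.

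For the upper bound I would combine the constraint $\alpha_{x_0}(t)\leq \tfrac12+\tfrac1{2\mu}$ with the universal lower bound $\alpha_{x_0}(t)\geq \tfrac12+\tfrac1{2\mu(t)}$ provided by Proposition~\ref{thm:Lower_Bound_For_Holder_Regularity}, which forces $\mu(t)\geq \mu$ for every irrational $t$ in the target set. By the definition of the exponent of irrationality in \eqref{eq:Exponent_Of_Irrationality}, this yields $t\in A_{\mu-\epsilon}$ for every $\epsilon>0$. After discarding the (dimension zero) rationals, the target set therefore embeds into $A_{\mu-\epsilon}$ for every $\epsilon>0$, and the Jarn\'ik--Besicovitch Theorem~\ref{thm:Jarnik_Besicovitch} controls its dimension by $\operatorname{dim}_{\mathcal H}A_{\mu-\epsilon}=2/(\mu-\epsilon)$. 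Letting $\epsilon\to 0^+$ produces the desired bound $2/\mu$ (the boundary case $\mu=2$ being covered by $\dim A_{\mu-\epsilon}\leq 1$).

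The only delicate point is the parameter tuning in the lower bound: the hypothesis $\mu<2\sigma$ is used precisely to leave enough room for $\delta_1$ so that the upper H\"older inequality of part~(c) of Proposition~\ref{thm:Holder_Regularity_For_Irrationals} remains applicable. Everything else is a direct book-keeping exercise on top of the preceding propositions, and no new analytic input is required since the substantive work has already been done in the construction and dimension estimate of $B^{\delta_1,\delta_2}_{\mu,\mathcal Q_{x_0}}$.
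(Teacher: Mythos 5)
Your proposal is correct and follows essentially the same route as the paper: the lower bound comes from sandwiching $B^{\delta_1,\delta_2}_{\mu,\mathcal Q_{x_0}}$ inside the target set via parts (b) and (c) of Proposition~\ref{thm:Holder_Regularity_For_Irrationals} (choosing $\delta_1 < 2\sigma-\mu$ and $\delta_2$ small enough), and the upper bound comes from the forced inequality $\mu(t)\geq\mu$ plus the Jarn\'ik--Besicovitch theorem applied to $A_{\mu-\epsilon}$. The only cosmetic difference is that you spell out the choice of $\delta_2$ explicitly, whereas the paper simply notes that $\delta_2$ is arbitrary.
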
 
\begin{proof}
Choose $\delta_2 > 0$ 
and any $\delta_1 < 2\sigma - \mu$. 
Hence, $2 \leq \mu < 2\sigma - \delta_1 $
and Proposition~\ref{thm:Holder_Regularity_For_Irrationals} implies 
\begin{equation}
B^{\delta_1,\delta_2}_{\mu,\mathcal Q_{x_0}} 
 \subset \left\{   \, t \, : \frac12 + \frac{1}{4\mu + 2\delta_2} \leq \alpha_{x_0}(t) \leq \frac12 + \frac{1}{2\mu}    \right\}.
\end{equation}
Since $\operatorname{dim}_{\mathcal H} B^{\delta_1, \delta_2}_{\mu,\mathcal Q_{x_0}}  = 1/\mu$
and $\delta_2$ is arbitrary, 
we get the lower bound.
Let us now prove the upper bound. 
If $\alpha_{x_0}(t) \leq \frac12 + \frac{1}{2\mu}$, 
by Proposition~\ref{thm:Lower_Bound_For_Holder_Regularity} we get
$ \frac12 + \frac{1}{2\mu(t)} \leq \alpha_{x_0}(t) \leq \frac12 + \frac{1}{2\mu} $, 
hence $\mu(t) \geq \mu$. This implies $t \in A_{\mu-\epsilon}$ for all $\epsilon >0$,
so by the Jarnik-Besicovitch Theorem~\ref{thm:Jarnik_Besicovitch} we get
\begin{equation}
 \operatorname{dim}_{\mathcal H} \left\{   \, t \, : \frac12 + \frac{1}{4\mu} - \delta \leq \alpha_{x_0}(t) \leq \frac12 + \frac{1}{2\mu}    \right\}
  \leq
   \operatorname{dim}_{\mathcal H}  A_{\mu - \epsilon}
   = \frac{2}{\mu - \epsilon} 
\end{equation}
for all $\delta \geq 0$. We conclude by taking the limit $\epsilon \to 0$. 
\end{proof}

To get the precise statement of Theorem~\ref{thm:Main_Theorem_Irrationals}, 
we only need to relate the sets $A_{\sigma,  \, \mathbb N \setminus 4\mathbb N}$ with the exponent 
$\sigma(x_0) = \limsup_{n \to \infty} \{ \, \mu_n \, : \, q_n \not\in 4\mathbb N \, \}$ defined in \eqref{eq:Exponent_Of_Irrationality_Alternative}. 
We proceed as follows.
Since $\{A_{\sigma,  \, \mathbb N \setminus 4\mathbb N}\}_{\sigma \geq 2}$ is a nested family
and $A_{2,  \, \mathbb N \setminus 4\mathbb N} = (0,1) \setminus \mathbb Q$, 
for every $x_0 \in (0,1) \setminus \mathbb Q$ there exists $\widetilde{\sigma}(x_0) = \sup\{ \, \sigma \, : \, x_0 \in A_{\sigma,  \, \mathbb N \setminus 4\mathbb N} \,  \}$. 
Let us check that $\sigma(x_0) = \widetilde \sigma (x_0)$.
Indeed, call $\widetilde \sigma(x_0) = \widetilde\sigma$. 

$\bullet\, \, $ If $\widetilde\sigma > 2$. 
Then for $\epsilon > 0$ small enough there exists a sequence $b_k/q_k$ such that $q_k \not\in 4\mathbb N$ and $|x_0 -  b_k/q_k| < 1/q_k^{\widetilde\sigma - \epsilon} < 1/(2q_k^2)$. 
By Khinchin's theorem \cite[Theorem 19]{Khinchin1964}, $b_k/q_k$ is an approximation by continued fraction, for which $|x_0 -  b_k/q_k| = 1/q_k^{\mu_k} < 1/q_k^{\widetilde\sigma - \epsilon}$,
 and therefore $\mu_k \geq \widetilde\sigma - \epsilon$. 
This implies $\sigma(x_0) \geq \widetilde\sigma - \epsilon $ for all $\epsilon > 0$, hence $ \sigma(x_0) \geq \widetilde\sigma$.
On the other hand, 
for all approximations by continued fractions with $q_n \not\in 4\mathbb N$ with large enough $n$
we have
$|x_0 -  b_n/q_n| = 1/q_n^{\mu_n} > 1/q_n^{\widetilde\sigma + \epsilon}$, hence $\mu_n  \leq \widetilde\sigma + \epsilon$. 
This holds for all $\epsilon > 0$, so $\sigma(x_0) \leq \widetilde\sigma$. 

$\bullet\, \, $  If $\widetilde\sigma = 2$, then  $|x_0 -  b_n/q_n| = 1/q_n^{\mu_n} > 1/q_n^{2 + \epsilon}$, hence $\mu_n \leq 2+\epsilon$, for all approximations by continued fractions with $q_n \not\in 4\mathbb N$.
Therefore, $\sigma(x_0) \leq 2$. Since $\sigma(x_0) \geq 2$ always holds, we conclude. 

Therefore, 
let $x_0 \in (0,1) \setminus \mathbb Q$. 
Then, $x_0 \in A_{\sigma,  \, \mathbb N \setminus 4\mathbb N}$ for all $\sigma < \sigma(x_0)$, 
so the conclusion of Theorem~\ref{thm:Theorem_For_Irrationals}
holds for $2 \leq \mu < 2\sigma$, for all $\sigma < \sigma(x_0)$. 
That implies that for every $\delta > 0$, 
\[
\frac{1}{\mu} \leq \operatorname{dim}_{\mathcal H} \left\{   \, t \, : \frac12 + \frac{1}{4\mu} - \delta  \leq \alpha_{x_0}(t) \leq \frac12 + \frac{1}{2\mu}    \right\}  \leq \frac{2}{\mu}, \qquad \text{ for all } \qquad 2 \leq \mu < 2\sigma(x_0). 
\]

\section{Proof of Theorem~\ref{thm:Main_Theorem_Rationals_Intermittency} - The high-pass filters when $x_0 \in \mathbb Q$}
\label{sec:Rational_High_Pass_Filters}
In this section we prove Theorem~\ref{thm:Main_Theorem_Rationals_Intermittency}. 
For that,  we compute the $L^p$ norms of the high-pass filters of $R_{x_0}$
when $x_0 \in \mathbb Q$. 
In Section~\ref{sec:High_Pass_Filter_And_Localized_Estimates}
we define Fourier high-pass filters using smooth cutoffs, 
reduce the computation of their $L^p$ norms to the study of Fourier localized $L^p$ estimates, 
state such localized estimates and deduce Theorem~\ref{thm:Main_Theorem_Rationals_Intermittency} from them.  
We prove such localized estimates in Section~\ref{sec:Proof_Of_Lp_Norms_Localized_Rationals}.

\subsection{High-pass filters and frequency localization}
\label{sec:High_Pass_Filter_And_Localized_Estimates}
We begin with the definition of high-pass filters we use in the proofs. 
Let $\phi \in C^\infty$ a positive and even cutoff with support on $[-1,1]$ and such that $\phi(x) = 1$ on $x\in [-1/2,1/2]$.  
Let $\psi(x) = \phi(x/2) - \phi(x)$, 
and 
 \begin{equation}\label{eq:Partition_Of_Unity}
\psi_{-1}(x) = \frac{\phi(x)}{\phi(x) + \sum_{i \in \mathbb N} \psi ( x/2^i )},
 \qquad 
\psi_k(x) = \frac{\psi ( x/2^k )}{\phi(x) + \sum_{i \in \mathbb N} \psi( x/2^i )}, 
\qquad \text{ for } k \geq 0, 
\end{equation}
so that we have the partition of unity $\sum_{k = -1}^\infty \psi_k(x) = 1$.
For $k \geq 0$, $\psi_k$ is supported on $[-2^{k+1},-2^{k-1}] \cup [2^{k-1},2^{k+1}]$. 
Let $f$ be a periodic function with Fourier series $f(t) = \sum_{ n \in \mathbb Z} a_n e^{2\pi i n t}$.
With the partition of unity above, we perform a Littlewood-Paley decomposition 
\begin{equation}
f(t) = \sum_{k = -1}^\infty P_kf(t), \qquad \text{ where } \qquad P_kf(t) = \sum_{n \in \mathbb Z} \psi_k(n) a_n e^{2\pi i n t}. 
\end{equation} 
The Fourier high-pass filter at frequency $N \in \mathbb N$
is roughly $P_{\geq N} f(t) = \sum_{k \geq \log N} P_kf(t)$. 
Let us be more precise working directly with $R_{x_0}$, 
whose frequencies in $t$ are squared.  
Let $N \in \mathbb N$ be large, 
and define $k_N$ to be the unique $k_N \in \mathbb N$ such that $2^{k_N} \leq \sqrt{N} < 2^{k_N+1}$. 
We define the high-pass filter of $R_{x_0}$ at frequency $N$ as 
\begin{equation}\label{eq:High_Pass_Filter}
P_{\geq N}  R_{x_0}(t) = \sum_{k \geq k_N} P_k R_{x_0}(t), 
\qquad \text{ where } \qquad P_kR_{x_0}(t) = \sum_{n \in \mathbb N } \psi_k(n) \frac{e^{2\pi i (n^2t + nx_0)}}{n^2}. 
\end{equation}
We first estimate $\lVert P_k R_{x_0} \rVert_p$
and  then extend the result to estimate $\lVert P_{\geq N} R_{x_0} \rVert_p$. 

\begin{rem}
At a first glance, using pure Littlewood-Paley blocks in the definition for high-pass filters in \eqref{eq:High_Pass_Filter} may seem restrictive, 
since it is analogue to estimating high-frequency cutoffs only for a sequence $N_k \simeq 2^k \to \infty$.  
However, the estimates we give depend only on the $L^1$ norm of the cutoff $\psi$, 
so slightly varying the definition and support of $\psi$ does not affect the estimates. 
This is analogous to having a cutoff $\Phi(x/N)$ for a fixed $\Phi$ as we state in the introduction.
\end{rem}

We now state the estimates for the frequency localized $L^p$ estimates. 
For the sake of generality, 
let $\Psi \in C^\infty$ be compactly supported outside the origin
and bounded below in an interval of its support (for instance, $\psi$ defined above).  
\begin{thm}\label{thm:Lp_Norms_Upper_And_Rationals}
Let $x_0 \in \mathbb R$.
Then, for $N \gg 1$, 
 \begin{equation}\label{eq:LpNorm_Upper}
\Big\lVert  \sum_{n \in \mathbb Z} \Psi \big(\frac{n}{N}\big)\,  e^{2\pi i (n^2 \, t + n \, x_0)}  \Big\rVert_{L^p(0,1)}^p
\lesssim 
  \left\{
 \begin{array}{ll}
 N^{p-2}, & \text{ when } p > 4, \\
 N^2 \log N, & \text{ when } p=4, \\
 N^{p/2}, & \text{ when } p < 4. 
 \end{array}
 \right.
 \end{equation}
 When $p=2$, the upper bound is sharp, that is, 
$
\big\lVert  \sum_{n \in \mathbb Z} \Psi (n/N)\,  e^{2\pi i (n^2 \, t + n \, x_0)}  \big\rVert_{L^2(0,1)}^2
\simeq N
$. 

If $x_0 \in \mathbb Q$, then the upper bound is sharp. 
That is, if $x_0 = P/Q$ with $(P,Q)=1$, then 
 \begin{equation}\label{eq:LpNorm_Rationals}
\Big\lVert  \sum_{n \in \mathbb Z} \Psi\big(\frac{n}{N}\big)\,  e^{2\pi i (n^2 \, t + n \, x_0)}  \Big\rVert_{L^p(0,1)}^p
\simeq_Q
  \left\{
 \begin{array}{ll}
 N^{p-2}, & \text{ when } p > 4, \\
 N^2 \log N, & \text{ when } p=4, \\
 N^{p/2}, & \text{ when } p < 4. 
 \end{array}
 \right.
 \end{equation}
\end{thm}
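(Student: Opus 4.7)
The approach is to analyze the exponential sum $S_N(t) := \sum_n \Psi(n/N)\, e^{2\pi i(n^2 t + n x_0)}$ via a Farey dissection of $[0,1]$ around rational points. The $L^2$ case is immediate from Plancherel: the Fourier coefficients of $S_N$ concentrate on the squares $m = n^2$, and since $\Psi$ is supported away from zero, $\lVert S_N \rVert_2^2 \simeq N$ with implicit constants independent of $x_0$, which already yields the sharp two-sided bound in \eqref{eq:LpNorm_Upper} for $p=2$.

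For the remaining $L^p$ norms, the main tool is an asymptotic expansion of $S_N$ near a rational $a/q$ with $\gcd(a,q)=1$, obtained by splitting $n = qm + r$ and applying Poisson summation in $m$, in exactly the spirit of Proposition~\ref{thm:AsymototicRationals}. The outcome has the schematic form
\[
S_N\Big(\frac{a}{q} + h\Big) = \frac{1}{q}\sum_{k \in \mathbb{Z}} G(a,k,q)\, J_{k,q}(h),
\qquad
J_{k,q}(h) = \int_{\mathbb{R}} \Psi(u/N)\, e^{2\pi i u(x_0 - k/q)}\, e^{2\pi i u^2 h}\, du,
\]
where the oscillatory integral $J_{k,q}$ has size $\lesssim N$ when $|h|N^2 \lesssim 1$ and decays like $|h|^{-1/2}$ for $|h|N^2 \gg 1$, while the Schwartz decay of $\widehat{\Psi}$ forces $J_{k,q}$ to concentrate on the single integer $k_q$ minimizing $|k/q - x_0|$. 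Combining $|G(a,k,q)| \leq \sqrt{2q}$ with these bounds yields the uniform pointwise estimate
\[
\big|S_N(a/q + h)\big| \lesssim \frac{N}{\sqrt{q}}\, \min\!\big(1,\, (|h|N^2)^{-1/2}\big).
\]
Integrating $|S_N|^p$ over a Farey arc of length $\simeq 1/(qN)$ centered at $a/q$ then contributes $\simeq N^{p-2}/q^{p/2}$ (the Fresnel tail is $p$-integrable since $p > 2$ in each regime of interest). Summing over the $\varphi(q)$ coprime residues and over $q \leq N$ reduces the proof of the upper bound in \eqref{eq:LpNorm_Upper} to the arithmetic sum $\sum_{q \leq N} \varphi(q)/q^{p/2}$, which is $\simeq 1$ for $p > 4$, $\simeq \log N$ for $p = 4$, and $\simeq N^{2-p/2}$ for $p < 4$, producing the three claimed bounds $N^{p-2}$, $N^2\log N$, and $N^{p/2}$.

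For the matching lower bound when $x_0 = P/Q \in \mathbb{Q}$, one restricts to denominators $q \in 4Q\mathbb{N}$, for which Proposition~\ref{thm:Non-differentiability_Rationals} guarantees that $k_q = qP/Q$ gives $k_q/q = x_0$ \emph{exactly} and that $|G(a,k_q,q)| \simeq \sqrt{q}$. The integrand $J_{k_q,q}(h)$ is then essentially a pure Fresnel integral of size $\simeq N$ on $|h| \lesssim 1/N^2$, yielding the matching pointwise lower bound $|S_N(a/q + h)| \gtrsim_Q N/\sqrt{q}$ on a set of measure $\simeq 1/N^2$ around each such $a/q$. Summing over coprime pairs $(a,q)$ with $q \in 4Q\mathbb{N}$ and $q \leq N$, and invoking the analogous lower estimates for $\sum_{q \leq N,\, q \in 4Q\mathbb{N}} \varphi(q)/q^{p/2}$ collected in Appendix~\ref{sec:SumsEulerFunctions}, reproduces each regime as a matching lower bound and establishes \eqref{eq:LpNorm_Rationals}.

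The principal technical obstacle I foresee is controlling the non-dominant ($k \neq k_q$) terms and the Poisson error uniformly in $(a,q)$, so that the Farey-arc integrals add up cleanly without double counting and without an escape of mass to minor arcs. A secondary but nontrivial step is the arithmetic estimate for $\sum_{q \leq N,\, q \in 4Q\mathbb{N}} \varphi(q)/q^{p/2}$ in each of the three regimes, both from above and from below with the correct $Q$-dependence — precisely the content deferred to Appendix~\ref{sec:SumsEulerFunctions}.
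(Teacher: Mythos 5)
Your overall strategy (cover $[0,1]$ by arcs near rationals $a/q$ with $q\leq N$, establish a pointwise exponential-sum estimate of shape $N q^{-1/2}\min\bigl(1,(|h|N^{2})^{-1/2}\bigr)$, integrate over each arc, and sum $\varphi(q)/q^{p/2}$ over $q\leq N$) coincides with the paper's. The essential technical difference is that the paper does not rederive the pointwise estimate: both the upper bound and the two-sided bound on $|t-a/q|\lesssim N^{-2}$ are taken as a black box from Bourgain's Lemma~3.18 (quoted here as Lemma~\ref{thm:BourgainLemma}). You propose instead to obtain it directly by Poisson summation, and you correctly identify that the work lies in controlling the non-dominant $k\neq k_q$ terms and the Poisson error uniformly in $(a,q)$ --- but you stop there. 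Since that estimate \emph{is} the theorem's main input, your proposal as written has a real gap: it is a plausible sketch of how Bourgain's lemma is proved, not a substitute for having it.

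Two more concrete issues. First, the parenthetical claim that the Fresnel tail is $p$-integrable ``since $p>2$ in each regime of interest'' is wrong: the regime $p<4$ includes $1<p\leq 2$, where $\int_{N^{-2}}^{1/(qN)}h^{-p/2}\,dh$ is dominated by the \emph{upper} endpoint, so the per-arc contribution is $\simeq q^{-1}N^{p/2-1}$ rather than $N^{p-2}q^{-p/2}$. The paper treats $p<2$, $p=2$, $p>2$ separately for exactly this reason. By coincidence your (incorrect) per-arc estimate still sums to the correct total $N^{p/2}$ for $p<2$, and to $N$ rather than $N\log N$ for $p=2$, which is still the right order --- so the final answer survives, but the intermediate justification does not. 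Second, in the lower-bound step you sum the contributions from arcs $B(a/q,\delta N^{-2})$ over $q\in 4Q\mathbb{N}\cap[1,\epsilon N]$ and coprime $a$; for this to give a lower bound you need those arcs to be pairwise disjoint, which holds because two distinct reduced fractions with denominators $\leq\epsilon N$ are separated by at least $(\epsilon N)^{-2}$, so one must choose $\delta<\epsilon^{2}$. This is exactly the disjointness check the paper carries out, and it is not merely decorative --- skipping it leaves the lower bound unproved. Your restriction to $q\in 4Q\mathbb{N}$ (to guarantee a nonvanishing Gauss sum via Proposition~\ref{thm:Non-differentiability_Rationals}) is more conservative than the paper's choice of $q\in Q\mathbb{N}$ with Bourgain's two-sided lemma, but harmless, provided you also verify that $\sum_{n\leq N}\varphi(4Qn)/n^{p/2}$ has the same orders as in Lemma~\ref{thm:SumOfEulerQWithWeight}.
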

\begin{rem}
All estimates in Theorem~\ref{thm:Lp_Norms_Upper_And_Rationals} depend on $\lVert \Psi \rVert_1$ due to Lemma~\ref{thm:BourgainLemma}. 
\end{rem}
We postpone the proof of Theorem~\ref{thm:Lp_Norms_Upper_And_Rationals} to Section~\ref{sec:Proof_Of_Lp_Norms_Localized_Rationals}. 
and use it now to compute the $L^p$ norms of the high-pass filters 
$\lVert P_{\geq N} R_{x_0} \rVert_p$
and therefore to prove Theorem~\ref{thm:Main_Theorem_Rationals_Intermittency}. 

\begin{proof}[Proof of Theorem~\ref{thm:Main_Theorem_Rationals_Intermittency}]
Denote the estimate for $x_0 \in \mathbb Q$ 
on \eqref{eq:LpNorm_Rationals} 
in Theorem~\ref{thm:Lp_Norms_Upper_And_Rationals}
by
\begin{equation}\label{eq:Lp_Rationals_G}
\big\lVert  \sum_{n \in \mathbb Z} \Psi(n/N)\,  e^{2\pi i (n^2 \, t + n \, x_0)}  \big\rVert_{L^p(0,1)}^p
\simeq G_p(N).
\end{equation}
First, use the triangle inequality in \eqref{eq:High_Pass_Filter} to bound
\begin{equation}
\lVert P_{\geq N} R_{x_0} \rVert_p
\leq 
\sum_{k \geq k_N} \lVert P_k R_{x_0} \rVert_p
= \sum_{k \geq k_N} \Big\lVert \sum_{n \in \mathbb Z} \psi_k(n) \, \frac{e^{2\pi i (n^2 t + nx_0)}}{n^2} \Big\rVert_p.
\end{equation}
Since $\psi_k$ is supported on $[2^{k-1}, 2^{k+1}]$, 
we can take the denominator $n^2$ out of the $L^p$ norm to get
\begin{equation}
\lVert P_{\geq N} R_{x_0} \rVert_p
\lesssim 
\sum_{k \geq k_N} \frac{1}{2^{2k}} \, \Big\lVert \sum_{n \in \mathbb Z} \psi_k(n) \, e^{2\pi i (n^2 t + nx_0)} \Big\rVert_p,
\end{equation}
for example using \cite[Lemma 3.1, Corollary 3.2]{EceizabarrenaVilacaDaRocha2022}. 
We can now use \eqref{eq:Lp_Rationals_G} 
to get\footnote{The estimates in Theorem~\ref{thm:Lp_Norms_Upper_And_Rationals} depend on $\lVert \Psi \rVert_1$, so strictly speaking we need to check that for large enough $k \gg 1$, the norm $\lVert \psi_k(2^k \cdot) \rVert_1$ does not depend on $k$. This is the case, since 
\begin{equation}
\int \psi_k(2^k x) \, dx = \int_{1/2}^2 \frac{\psi(x)}{\phi(2^k x) + \sum_{i=0}^\infty \psi(2^k x /2^i)} \, dx
= \int_{1/2}^2 \frac{\psi(x)}{\psi(x/2) + \psi(x) + \psi(2x) }\, dx 
= C_\psi. 
\end{equation}  
}
\begin{equation}\label{eq:Proof_UpperBound}
\lVert P_{\geq N} R_{x_0} \rVert_p
\lesssim 
\sum_{k \geq k_N} \frac{G_p(2^k)^{1/p}}{2^{2k}} 
\simeq \frac{G_p(2^{k_N})^{1/p}}{2^{2k_N}},
\end{equation} 
where the last equality follows by direct calculation because 
the defintion of $G_p$ makes the series be geometric. 
For the lower bound, as long as $ 1 < p < \infty$, 
the Mihklin multiplier theorem\footnote{Apply Mihklin's theorem in $\mathbb R$ to the operator $P_{k_N}$ in \eqref{eq:High_Pass_Filter} to get $\lVert P_{k_N} f \rVert_p \simeq \lVert P_{k_N} P_{\geq N} f \rVert_p \lesssim \lVert P_{\geq N} f \rVert_p$, and then periodize the result using a theorem by Stein and Weiss \cite [Chapter 7, Theorem 3.8]{SteinWeiss1971}.  }
combined again with 
\cite[Lemma 3.1, Corollary 3.2]{EceizabarrenaVilacaDaRocha2022}
and \eqref{eq:Lp_Rationals_G} gives
\begin{equation}\label{eq:Proof_LowerBound}
\lVert P_{\geq N} R_{x_0} \rVert_p 
\gtrsim \lVert P_{k_N}R_{x_0} \rVert_p
\simeq \frac{1}{2^{2k_N}}  \,   \Big\lVert \sum_{n} \psi_{k_N}(n) \, e^{2\pi i (n^2 t + nx_0)} \Big\rVert_p
\simeq \frac{G_p(2^{k_N})^{1/p}}{2^{2k_N}}. 
\end{equation}
Joining \eqref{eq:Proof_UpperBound} and \eqref{eq:Proof_LowerBound}
and recalling that $2^{k_N} \simeq \sqrt{N}$, 
we conclude that
\begin{equation}
\lVert P_{\geq N} R_{x_0} \rVert_p  
\simeq \frac{G_p(2^{k_N})^{1/p}}{2^{2k_N}}
\simeq \left\{
\begin{array}{ll}
N^{-1/2 - 1/p}, & p > 4, \\
N^{-3/4} \, (\log N)^{1/4}, & p = 4, \\
N^{-3/4}, & p < 4,
\end{array}\right.
\end{equation}
from which we immediately get
\begin{equation}
\eta(p) 
= \lim_{N \to \infty} \frac{\log (\lVert  P_{\geq N} R_{x_0} \rVert_p^p ) }{\log (1/N)}
= \left\{
\begin{array}{ll}
p/2 + 1, & p > 4, \\
3p/4, & p \leq 4.
\end{array}
\right.
\end{equation}
\end{proof}

\subsection{Frequency localized $L^p$ norms 
}\label{sec:Proof_Of_Lp_Norms_Localized_Rationals}
In this section we prove Theorem~\ref{thm:Lp_Norms_Upper_And_Rationals}. 
The $L^2$ estimate, which holds for all $x_0$,  
follows from Plancherel's theorem. 
For $p \neq 2$, we use the following well-known lemma, 
whose proof can be found in \cite[Lemma 3.18]{Bourgain1993} (see also \cite[Lemma 4.4]{BanicaVega2022}).
\begin{lem}\label{thm:BourgainLemma}
Let $\Psi \in C^\infty_0(\mathbb R)$. 
Let $N \in \mathbb N$ and $q \in \mathbb N$ such that $q \leq N$. 
Let also $a\in \mathbb Z$ such that $(a,q) = 1$. 
Then, 
\begin{equation}\label{eq:BourgainLemma_UpperBound}
\Big| t - \frac{a}{q} \Big| \leq \frac{1}{qN} \quad \Longrightarrow \quad \Big|  \sum_{n \in \mathbb Z} \Psi\left( \frac{n}{N} \right) \, e^{2 \pi i (n^2 t + n x)} \, \Big|  \lesssim_{\lVert \Psi \rVert_1} \frac{N}{\sqrt{q} \, \left( 1 +  N\, \sqrt{ |t -  a/q|  } \right)}.
\end{equation}
Moreover, there exist $\delta, \epsilon \leq 1$ only depending on $\Psi$ such that 
if 
\begin{equation}
q \leq \epsilon N, \qquad \Big| t - \frac{a}{q} \Big| \leq \frac{\delta}{N^2}, \qquad \Big| x - \frac{b}{q} \Big| \leq \frac{\delta}{N}
\end{equation}
for some $b \in \mathbb Z$, then
\begin{equation}\label{eq:GaussSumEstimate}
\Big|  \sum_{n \in \mathbb Z} \Psi\left( \frac{n}{N} \right) \, e^{2 \pi i (n^2 t + n x)} \, \Big| \simeq_{\lVert \Psi \rVert_1} \frac{N}{\sqrt{q}}.
\end{equation}
\end{lem}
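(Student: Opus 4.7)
The plan is to follow the now-standard strategy of Bourgain for exponential sums with quadratic phases: decompose the sum modulo $q$ to factor out a quadratic Gauss sum, and then use Poisson summation on the remaining smoothly weighted sum to transform it into an oscillatory integral that we can control by stationary phase.

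Concretely, I would first write $t = a/q + \beta$ with $|\beta| \le 1/(qN)$ and split $n = mq+r$ with $0 \le r < q$ and $m \in \mathbb Z$. Since $(mq+r)^2 a/q \equiv r^2 a/q \pmod 1$, the sum becomes
\begin{equation}
S = \sum_{r=0}^{q-1} e^{2\pi i r^2 a/q}\, e^{2\pi i r x} \sum_{m \in \mathbb Z} \Psi\Big(\tfrac{mq+r}{N}\Big)\, e^{2\pi i \big((mq+r)^2\beta + mqx\big)}.
\end{equation}
Next I would apply Poisson summation in $m$. After the change of variables $y=(mq+r)/N$ the $m$-sum turns into $\frac{N}{q}\sum_{k\in\mathbb Z} e^{-2\pi i kr/q} I_k$ where
\begin{equation}
I_k = \int \Psi(y)\, e^{2\pi i \big(N^2\beta y^2 + N(x - k/q)\,y\big)}\,dy.
\end{equation}
Collecting the $r$-sum now produces a classical quadratic Gauss sum $G(a,-k,q) = \sum_{r=0}^{q-1} e^{2\pi i(ar^2 - kr)/q}$, which satisfies $|G(a,-k,q)|\le \sqrt{2q}$ because $(a,q)=1$. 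Thus $|S| \le \frac{N}{\sqrt{q}}\sum_k |I_k|$ (up to a harmless constant).

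The heart of the proof is then the oscillatory integral bound for $I_k$. The phase $\phi(y) = N^2\beta y^2 + N(x-k/q)y$ has derivative $\phi'(y) = 2N^2\beta y + N(x-k/q)$, so the stationary point sits at $y^\ast_k = -(x-k/q)/(2N\beta)$. Standard stationary phase / van der Corput (using $\phi''= 2N^2\beta$) gives $|I_k| \lesssim \min(1, (N\sqrt{|\beta|})^{-1})$ whenever $y^\ast_k$ lies in the support of $\Psi$, and rapid decay in $k$ otherwise via non-stationary phase. The assumption $|\beta|\le 1/(qN)$ controls the number of $k\in\mathbb Z$ for which $y^\ast_k \in \mathrm{supp}\,\Psi$ by an absolute constant (since consecutive $k$ shift $y^\ast_k$ by $\simeq 1/(qN|\beta|)\ge 1$). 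Summing these contributions yields
\begin{equation}
|S| \lesssim_{\|\Psi\|_1} \frac{N}{\sqrt q}\cdot\frac{1}{1+N\sqrt{|\beta|}},
\end{equation}
which is the first assertion.

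For the sharp lower bound, the hypotheses $q\le \epsilon N$, $|\beta|\le \delta/N^2$ and $|x-b/q|\le \delta/N$ make only the frequency $k=-b$ contribute nontrivially: the phase in $I_{-b}$ is essentially bounded (since both $N^2|\beta| \le \delta$ and $N|x-b/q|\le \delta$), so $|I_{-b}| \simeq \int\Psi \ne 0$ when $\delta$ is small enough, while for $k \ne -b$ non-stationary phase in $y$ gives $|I_k| \ll 1$. Combined with the fact that $|G(a,b,q)|=\sqrt{q}$ or $\sqrt{2q}$ whenever it is nonzero, choosing $\epsilon,\delta$ small gives the matching lower bound $|S|\simeq N/\sqrt q$. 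The main obstacle is the bookkeeping of the stationary phase constants so that the same $\epsilon,\delta$ simultaneously guarantee the main term dominates the error and the off-diagonal $k$ terms are negligible; this is purely a quantitative tuning step, independent of $x_0$, $N$, and $q$.
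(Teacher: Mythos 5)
The paper does not prove this lemma; it is stated as well known with pointers to \cite[Lemma 3.18]{Bourgain1993} and \cite[Lemma 4.4]{BanicaVega2022}, so there is no internal proof to compare against. Your route --- split $n = mq+r$, apply Poisson summation in $m$ after rescaling, pull out the Gauss sum $G(a,\cdot,q)$, and control the oscillatory integrals $I_k$ by stationary/non-stationary phase, using $|\beta|\le 1/(qN)$ to keep the number of stationary $k$ to $O(1)$ --- is the standard strategy, and modulo an immaterial sign in the Poisson dual variable it correctly delivers the upper bound \eqref{eq:BourgainLemma_UpperBound}.

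The lower bound, however, has a genuine gap. You reduce \eqref{eq:GaussSumEstimate} to the observation that $|G(a,b,q)|=\sqrt q$ or $\sqrt{2q}$ ``whenever it is nonzero,'' but you never rule out $G(a,b,q)=0$, which is exactly when the argument collapses: the alleged main term $\tfrac{N}{q}G(a,b,q)I_{-b}$ vanishes while every remaining $I_k$ is $O((q/N)^M)$, so $|S|\ll N/\sqrt q$ rather than $\simeq N/\sqrt q$. This bad case does occur under the stated hypotheses: take $q=4$, $a=b=1$, $t=x=1/4$; then $G(1,1,4)=1-1-1+1=0$, and
\begin{equation}
\sum_{n\in\mathbb Z}\Psi(n/N)\,e^{2\pi i(n^2/4+n/4)}=\sum_{n\in\mathbb Z}\Psi(n/N)\,e^{\pi i n(n+1)/2}
\end{equation}
carries the period-four sign pattern $+,-,-,+$, so by summation by parts it is $O(1)$, not $\simeq N/2$. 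The correct lower bound therefore needs the extra hypothesis $G(a,b,q)\neq 0$ (equivalently, $q$ odd, or $q$ even with $q/2\equiv b\pmod 2$); the paper is aware of this constraint elsewhere (compare the restriction $q\notin 4\mathbb N$ used to force $|G|\gtrsim\sqrt q$ in Section~\ref{sec:Irrationals}), but neither the lemma as quoted nor your sketch states it. You would need to add that hypothesis to \eqref{eq:GaussSumEstimate} and then check, in the applications, that the chosen approximants $(a,b,q)$ actually satisfy it.
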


We are now ready to prove Theorem~\ref{thm:Lp_Norms_Upper_And_Rationals}. 
\begin{proof}[Proof of Theorem~\ref{thm:Lp_Norms_Upper_And_Rationals}]
Let $x_0 \in \mathbb R$. 
For simplicity, we prove the $L^2$ estimate for a symmetric $\Psi$. 
Considering $f$ as a Fourier series in $t$,  
by Plancherel's theorem we write
	\begin{equation}
	\begin{split}
	\Big\lVert  \sum_{n \in \mathbb Z} \Psi\big(\frac{n}{N}\big)\,  e^{2\pi i (n^2 \, t + n \, x_0)}  \Big\rVert_{L^2(0,1)}^2 
	& = \sum_{n =1}^\infty \left|  \Psi\big(\frac{n}{N}\big)\,e^{2\pi i  n \, x_0} + \Psi\big(-\frac{n}{N}\big)\,e^{-2\pi i  n \, x_0}  \right|^2 \\
	& = \sum_{n=1}^\infty \Psi\big(\frac{n}{N}\big)^2 \left|  e^{2\pi i n x_0} + e^{-2\pi i n x_0} \right|^2 
	 \simeq  \sum_{n=1}^\infty \Psi\big(\frac{n}{N}\big)^2 \cos^2 (2\pi n x_0) 
	\end{split}
	\end{equation}
This sum is upper bounded by $N$ by the triangle inequality. 
	If $x_0$ is rational, say $x_0 = P/Q$, 
	the bound from below follows\footnote{Without loss of generality assume that $\Psi(x) \simeq 1$ for $x \in (1,2)$.}
	by summing only over multiples of $Q$ in $[N,2N]$, so that 
	\begin{equation}\label{eq:LowerSumWithCosine}
	\Big\lVert  \sum_{n \in \mathbb Z} \Psi\big(\frac{n}{N}\big) \,  e^{2\pi i (n^2 \, t + n \, x_0)}  \Big\rVert_{L^2(0,1)}^2 
	 \gtrsim \sum_{ k = N/Q}^{2N/Q} \cos^2 (2\pi kQ x_0)
	 = \frac{N}{Q} \simeq_Q N. 
	\end{equation}		
	If $x_0$ is irrational, it is known that the sequence $(nx_0)_n$ is equidistributed in the torus, 
	which means that for any continuous $p$-periodic function 
	\begin{equation}
	\lim_{N \to \infty } \frac{1}{N} \sum_{n=1}^N f(nx_0)  = \int_0^p f.
	\end{equation}
	In particular, since for $f(y) = \cos(4\pi y)$ we have $\int_0^{1/2} f(y)\, dy=0$, we get\footnote{Using the trigonometric identity $\cos^2(x) = (1 + \cos (2x))/2$.}
	for large $N$ that
	\begin{equation}
	\Big\lVert  \sum_{n \in \mathbb Z} \Psi\big(\frac{n}{N}\big)\,  e^{2\pi i (n^2 \, t + n \, x_0)}  \Big\rVert_{L^2(0,1)}^2
	\gtrsim \sum_{n = N}^{2N} \cos^2 (2\pi n x_0)
	\simeq N +  \sum_{n = N}^{2N} \cos (4\pi n x_0)
	\simeq N.
	\end{equation}
	
	We now prove the upper bound \eqref{eq:LpNorm_Upper} for any $x_0 \in \mathbb R$. 
The Dirichlet approximation theorem implies that any $t \in \mathbb R \setminus \mathbb Q$ can be approximated as follows:
\begin{equation}
\forall N \in \mathbb N, \quad \exists q \leq N, \quad 1 \leq a \leq q \quad \text{ such that } \quad \Big| t - \frac{a}{q} \Big| \leq \frac{1}{qN},  
\end{equation}
which can be rewritten as $\mathbb R \setminus \mathbb Q \subset \bigcup_{q=1}^N \bigcup_{a=1}^q B \big( \frac{a}{q}, \frac{1}{qN} \big)$ for all $N \in \mathbb N$. 
Therefore, for any $N \in \mathbb N$, 
\begin{equation}\label{eq:DecomposingUpperBound}
 \Big\lVert  \sum_{n \in \mathbb Z} \Psi(n/N)\,  e^{2\pi i (n^2 \, t + n \, x_0)}  \Big\rVert_{L^p(0,1)}^p 
 \leq \sum_{q=1}^N \sum_{a=1}^q \int_{B(  \frac{a}{q}, \frac{1 }{qN} )}   \Big|  \sum_{n \in \mathbb Z}  \Psi(n/N)\,  e^{2\pi i (n^2 \, t + n \, x_0)}  \Big|^p\, dt.
\end{equation}
We split each integral according to the two situations in \eqref{eq:BourgainLemma_UpperBound} 
in Lemma~\ref{thm:BourgainLemma}:
\begin{equation}\label{eq:EstimatingAbove}
\begin{split}
& \int_{|t - \frac{a}{q}| < \frac{1}{N^2} }   \Big|  \sum_{n \in \mathbb Z}  \Psi(n/N) \,  e^{2\pi i (n^2 \, t + n \, x_0)}  \Big|^p\, dt  + \int_{ \frac{1}{N^2}  < |t - \frac{a}{q}| < \frac{1}{qN} }   \Big|  \sum_{n \in \mathbb Z}  \Psi(n/N) \,  e^{2\pi i (n^2 \, t + n \, x_0)}  \Big|^p\, dt \\
& \qquad \qquad \leq \int_{|t - \frac{a}{q}| < \frac{1}{N^2} }  \Big( \frac{N}{\sqrt{q} } \Big)^p\, dt  + \int_{\frac{1}{N^2} <|t - \frac{a}{q}| < \frac{1}{qN} }  \Big( \frac{1}{\sqrt{q} \,  |t - \frac{a}{q}|^{1/2 } }  \Big)^p\, dt \\
& \qquad \qquad \simeq  \frac{N^{p-2}}{q^{p/2}} + \frac{1}{q^{p/2}} \, \int_{\frac{1}{N^2}}^{\frac{1}{qN}}  \frac{1}{h^{p/2}} \, dh.
\end{split}
\end{equation} 
The behavior of that last integral changes depending on $p$ being greater or smaller than 2.  
\begin{itemize}
	\item If $p < 2$, 
	\begin{equation}
	\eqref{eq:EstimatingAbove} \simeq \frac{N^{p-2}}{q^{p/2}} + \frac{1}{q^{p/2}} \left( \left( \frac{1}{qN} \right)^{1-p/2} - \left( \frac{1}{N^2} \right)^{1-p/2} \right)
	\leq \frac{N^{p-2}}{q^{p/2}} +  \frac{1}{q\, N^{1-p/2}},
	\end{equation}
	so 
	\begin{equation}
	\eqref{eq:DecomposingUpperBound} 
	 \leq N^{p-2}\, \sum_{q=1}^N \sum_{a=1}^q \frac{1}{q^{p/2}} + \frac{1}{ N^{1-p/2}}\,  \sum_{q=1}^N \sum_{a=1}^q \frac{1}{q}
	\lesssim N^{p/2}. 
	\end{equation}
	
	\item If $p=2$, 
	\begin{equation}
	\eqref{eq:EstimatingAbove} 
	\simeq  \frac{1}{q} \Big( 1 +  \int_{\frac{1}{N^2}}^{\frac{1}{qN}}  \frac{dh}{h} \Big) \lesssim \frac{1}{q} \left( 1 + \log(N^2) - \log(qN) \right)
	= \frac{1 +  \log (N/q) }{q}, 
	\end{equation}
	hence 
	\begin{equation}
	\eqref{eq:DecomposingUpperBound} 
	\lesssim \sum_{q=1}^N \Big( 1 -  \log ( q/N )  \Big) \simeq N - \int_1^N \log(x/N) \, dx \simeq N \Big( 1  -\int_{\frac{1}{N}}^1 \log(y)\, dy  \Big)
	\simeq N. 
	\end{equation}
	
	\item If $p > 2$, 
	\begin{equation}
	\eqref{eq:EstimatingAbove} 
	\simeq  \frac{N^{p-2}}{q^{p/2}} + \frac{ \left( N^2 \right)^{p/2 - 1} -  (qN )^{p/2 - 1} }{q^{p/2}} 
	\lesssim \frac{N^{p-2}}{q^{p/2}} 
	\quad \Longrightarrow \quad 
	\eqref{eq:DecomposingUpperBound} \lesssim N^{p-2} \, \sum_{q=1}^N \frac{1}{q^{p/2 - 1}}.  
	\end{equation}
	This series converges if and only if $p > 4$, and more precisely, 
	\begin{equation}
	\eqref{eq:DecomposingUpperBound} \lesssim 
	\left\{  \begin{array}{ll}
	N^{p-2}, & p > 4, \\
	N^2 \, \log N, & p = 4, \\
	N^{p-2} \, N^{2 - p/2} = N^{p/2}, &  p < 4. 
	\end{array}\right.
	\end{equation}
	This concludes the proof of \eqref{eq:LpNorm_Upper}. 
\end{itemize}

We now prove the lower bound in \eqref{eq:LpNorm_Rationals} for $x_0 \in \mathbb Q$. 
Let $x_0 = P/Q$ with $(P,Q) = 1$. 
Let $\delta, \epsilon > 0$ as given in Lemma~\ref{thm:BourgainLemma}, 
and let $N \in \mathbb N$ be such that $Q \leq \epsilon N$. 
Bound the $L^p$ norm from below by 
\begin{equation}\label{eq:LowerBoundSimple}
 \Big\lVert  \sum_{n \in \mathbb Z} \Psi(n/N)\,  e^{2\pi i (n^2 \, t + n \, x_0)}  \Big\rVert_{L^p(0,1)}^p 
 \geq  \int_{B\big(  \frac{a}{Q}, \frac{\delta }{N^2} \big)}   \Big|  \sum_{n \in \mathbb Z} \Psi(n/N) \,  e^{2\pi i (n^2 \, t + n \, x_0)}  \Big|^p\, dt,
\end{equation}
where $a$ is any $1 \leq a \leq Q$ such that $(a,Q)=1$. 
Use Lemma~\ref{thm:BourgainLemma} with $q=Q$ and $b = P$, 
for which the condition $0 = |x_0 - P/Q| < \delta / N$ is satisfied trivially, 
and $|t - a/Q| < \delta / N^2$, which is valid on the domain of integration.
Then, for $N \geq Q / \epsilon$, 
\begin{equation}
 \Big\lVert  \sum_{n \in \mathbb Z} \Psi(n/N) \,  e^{2\pi i (n^2 \, t + n \, x_0)}  \Big\rVert_{L^p(0,1)}^p  
 \gtrsim \int_{B\big(  \frac{a}{Q}, \frac{\delta }{N^2} \big)} \Big( \frac{N}{\sqrt{Q}} \Big)^p\, dt 
 \simeq \frac{N^p}{Q^{p/2}} \, \frac{\delta}{N^2} \simeq_Q N^{p-2}. 
\end{equation} 
In view of the upper bound in \eqref{eq:LpNorm_Upper}, this is optimal when $p > 4$. 
When $p \leq 4$, we refine the bound in \eqref{eq:LowerBoundSimple} as follows. 
Define the set
\begin{equation}
\mathcal Q_N = \{  \, q \in \mathbb N \, : \,  Q \mid q \, \text{ and } \, q \leq \epsilon N \,  \}, 
\end{equation}
whose cardinality $ \simeq \epsilon N /Q$ is as large as needed if $N \gg 1$. 
Observe that 
\begin{equation}
B\Big( \frac{a}{q}, \frac{\delta}{N^2} \Big)  \cap  B\Big( \frac{a'}{q'}, \frac{\delta}{N^2} \Big) =  \emptyset , \qquad \forall q,q' \in \mathcal Q_N , \quad (a,q) = 1 = (a',q'), 
\end{equation}
as long as $a/q \neq a'/q'$. 
Indeed, the distance from the centers is $\frac{|aq' - a'q|}{q \, q'} \geq \frac{1}{q \, q'} \geq \frac{1}{\epsilon^2 N^2}$, 
while the radius is $\frac{\delta}{N^2} < \frac{1}{\epsilon^2 N^2}$ (choosing a smaller $\delta > 0$ if needed). 
Hence the balls in the family  $\{  B(a/q, \delta/N^2)  \, : \, q \in \mathcal Q_N, \, (a,q)=1 \, \} $ are pairwise disjoint, 
and we can bound 
\begin{equation}\label{eq:Case_P_Small}
 \Big\lVert  \sum_{n \in \mathbb Z} \Psi(n/N) \,  e^{2\pi i (n^2 \, t + n \, x_0)}  \Big\rVert_{L^p(0,1)}^p  \gtrsim \sum_{q \in \mathcal Q_N} \, \sum_{a : (a,q) = 1 } \int_{B\left(  \frac{a}{q}, \frac{\delta }{N^2} \right)} \Big|  \sum_{n \in \mathbb Z} \Psi(n/N)  \,  e^{2\pi i (n^2 \, t + n \, x_0)}   \Big|^p   \, dt.
\end{equation} 
For each of those integrals we have $q = Qn$ for some $n \in \mathbb N$. 
To use Lemma~\ref{thm:BourgainLemma}
we chose $b = Pn$ so that $0 = |x_0 - b/q| < \delta / N$, 
hence
\begin{equation}\label{eq:Before_Lemma}
 \eqref{eq:Case_P_Small}
 \gtrsim \sum_{q \in \mathcal Q_N} \, \sum_{a : (a,q) = 1 } \int_{B\left(  \frac{a}{q}, \frac{\delta }{N^2} \right)} \Big( \frac{N}{\sqrt{q}} \Big)^p  \, dt 
 \simeq  \delta \, N^{p-2}\, \sum_{q \in \mathcal Q_N} \,   \frac{\varphi(q)}{q^{p/2}} 
 \simeq \frac{N^{p-2}}{Q^{p/2}} \, \sum_{n=1}^{\epsilon N / Q} \,   \frac{\varphi(Qn)}{n^{p/2}}.
\end{equation} 
We estimate this sum in the following lemma, which we prove in Appendix~\ref{sec:SumsEulerFunctions}, Corollary~\ref{thm:Appendix_Corollary}.
\begin{lem}\label{thm:SumOfEulerQWithWeight}
Let $Q \in \mathbb N$. Then, for $N \gg 1$, 
\begin{equation}
\sum_{n=1}^N \frac{\varphi(Qn)}{n^2} \simeq \log N, \qquad \text{ and } \qquad \sum_{n=1}^N \frac{\varphi(Qn)}{n^\alpha} \simeq N^{2 -  \alpha}, \quad \text{ for } \quad \alpha < 2,
\end{equation}
where the implicit constants depend on $Q$ and $\alpha$. 
\end{lem}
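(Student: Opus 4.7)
The plan is to sandwich $\varphi(Qn)$ between elementary bounds and reduce everything to the classical asymptotic $\sum_{n\le N}\varphi(n)\sim 3N^2/\pi^2$, from which both regimes $\alpha=2$ and $\alpha<2$ fall out by Abel summation.

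For the upper bound I would use the trivial estimate $\varphi(Qn)\le Qn$, which gives
\begin{equation}
\sum_{n=1}^N \frac{\varphi(Qn)}{n^\alpha}\;\le\; Q\sum_{n=1}^N n^{1-\alpha},
\end{equation}
and the right-hand side is $\simeq_Q \log N$ when $\alpha=2$ and $\simeq_{Q,\alpha} N^{2-\alpha}$ when $\alpha<2$ by comparison with the integral $\int_1^N x^{1-\alpha}\,dx$.

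For the lower bound, the key arithmetic input is the identity
$\varphi(Qn)=\varphi(Q)\varphi(n)\cdot d/\varphi(d)$ with $d=\gcd(Q,n)$. Since $d/\varphi(d)\ge 1$, this yields the uniform bound $\varphi(Qn)\ge\varphi(Q)\varphi(n)$, so
\begin{equation}
\sum_{n=1}^N \frac{\varphi(Qn)}{n^\alpha}\;\ge\;\varphi(Q)\sum_{n=1}^N \frac{\varphi(n)}{n^\alpha},
\end{equation}
and the dependence on $Q$ is absorbed into the $\simeq_Q$ constant. It then suffices to show $\sum_{n\le N}\varphi(n)/n^\alpha$ has the claimed order. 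Setting $S(N)=\sum_{n\le N}\varphi(n)$, Möbius inversion $\varphi(n)=\sum_{d\mid n}\mu(d)(n/d)$ followed by swapping the order of summation gives the classical
\begin{equation}
S(N)\;=\;\frac{3}{\pi^{2}}\,N^{2}+O(N\log N),
\end{equation}
using $\sum_{d\ge 1}\mu(d)/d^{2}=1/\zeta(2)=6/\pi^{2}$. Applying Abel summation,
\begin{equation}
\sum_{n\le N}\frac{\varphi(n)}{n^\alpha}\;=\;\frac{S(N)}{N^\alpha}+\alpha\int_{1}^{N}\frac{S(x)}{x^{\alpha+1}}\,dx,
\end{equation}
and plugging in the asymptotic for $S$ produces $\simeq \log N$ when $\alpha=2$ (from $\int_1^N dx/x$) and $\simeq N^{2-\alpha}$ when $\alpha<2$ (from $\int_1^N x^{1-\alpha}\,dx$).

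The main obstacle is really just bookkeeping: matching the upper and lower bounds up to $Q$-dependent constants. The upper bound carries a factor of $Q$ while the lower bound carries $\varphi(Q)\cdot 3/\pi^{2}$; both are positive constants depending only on $Q$ (and on $\alpha$ when $\alpha<2$, coming from $1/(2-\alpha)$), so they are absorbed into the $\simeq_{Q}$ notation and the two-sided estimate follows. No deeper Diophantine input is needed, in contrast to the Duffin--Schaeffer arguments used elsewhere in the article.
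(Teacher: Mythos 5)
Your proof is correct, and it is genuinely simpler than the paper's. The decisive difference is the lower bound: you invoke the identity
$\varphi(Qn)=\varphi(Q)\varphi(n)\,\frac{d}{\varphi(d)}$ with $d=\gcd(Q,n)$, and since $d/\varphi(d)\geq 1$ this gives the uniform inequality $\varphi(Qn)\geq\varphi(Q)\varphi(n)$ for \emph{all} $n$, which instantly reduces the $Q$-weighted sum to the $Q=1$ case. The paper instead restricts the lower bound to the subsum over $n$ with $(Q,n)=1$, where $\varphi(Qn)=\varphi(Q)\varphi(n)$ holds exactly, and then has to re-run the whole Möbius-inversion computation of $\sum_{n\le N}\varphi(n)$ subject to the coprimality constraint; this is what forces the auxiliary Lemma~\ref{thm:LemmaModM1} on sums of residues coprime to $Q$ and the somewhat delicate bookkeeping in Proposition~\ref{thm:SumOfEulerFunctionQ}. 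Your route bypasses Proposition~\ref{thm:SumOfEulerFunctionQ} and Lemma~\ref{thm:LemmaModM1} entirely and appeals only to the classical $S(N)=\sum_{n\le N}\varphi(n)=\frac{3}{\pi^2}N^2+O(N\log N)$ followed by Abel summation, which is essentially the content of the paper's Corollary~\ref{thm:SumOfEulerWithWeight}; the two approaches then coincide in how they pass from the unweighted sum to the $n^{-\alpha}$-weighted one. What the paper's longer argument buys is a sharper structural statement (an explicit leading constant $c_Q\varphi(Q)^2/(2Q)$ and error term for $\Phi_Q(N)$), but for the order-of-magnitude conclusion actually needed in the lemma your super-multiplicativity shortcut is cleaner. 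One small point worth stating explicitly if you write this up: for the trivial upper bound $\varphi(Qn)\le Qn$ to yield $\simeq_{Q,\alpha}N^{2-\alpha}$ (and $\simeq\log N$ at $\alpha=2$) you are implicitly using $\sum_{n\le N}n^{1-\alpha}\simeq N^{2-\alpha}$ for $\alpha<2$, which does carry an $\alpha$-dependent constant $1/(2-\alpha)$, so the constants necessarily depend on $\alpha$ for $\alpha$ near $2$ — consistent with the lemma's statement.
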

Using this lemma in \eqref{eq:Before_Lemma}, when $p < 4$ we get
\begin{equation}
 \Big\lVert  \sum_{n \in \mathbb Z} \Psi(n/N)\,  e^{2\pi i (n^2 \, t + n \, x_0)}  \Big\rVert_{L^p(0,1)}^p
 \simeq_{p,Q}  \frac{N^{p-2}}{Q^{p/2}} \, \Big( \frac{\epsilon N}{Q} \Big)^{2-\frac{p}{2}} 
 \simeq_{p,Q} N^{p/2}. 
\end{equation}
Similarly, when $p = 4$ we get
\begin{equation}
 \Big\lVert  \sum_{n \in \mathbb Z} \Psi(n/N)\,  e^{2\pi i (n^2 \, t + n \, x_0)}  \Big\rVert_{L^4(0,1)}^4 
\simeq_{Q}  \frac{N^2}{Q^2} \, \log \Big( \frac{\epsilon N}{Q} \Big)
 \simeq_{Q} N^2 \, \log N.
\end{equation}
Together with the upper bounds in \eqref{eq:LpNorm_Upper}, this completes the proof. 
\end{proof}

\appendix

\section{Sums of Euler's totient function}\label{sec:SumsEulerFunctions}
Sums of the Euler totient function play a relevant role in this article, 
especially in Lemma~\ref{thm:SumOfEulerQWithWeight}.
In Section~\ref{sec:SumsEulerClassical} we state the classical results
and briefly prove them for completeness.  
In Section~\ref{sec:SumsEulerModQ}
we adapt these classical proofs to sums modulo $Q$ that we need in this article.  
Throughout this appendix, $\varphi$ denotes the Euler totient function and $\mu$ denotes the M\"obius function\footnote{For $n \in \mathbb N$, $\mu(n) = 1$ if $n$ is has no squared prime factor and if it has an even number of prime factors; $\mu(n) = -1$ if $n$ is has no squared prime factor and if it has an odd number of prime factors; and $\mu(n) = 0$ if it has a squared prime factor.}.

\subsection{Sums of Euler's totient function}\label{sec:SumsEulerClassical}
Define the sum function
\begin{equation}\label{eq:EulerSumFunction}
\Phi(N) = \sum_{n=1}^N \varphi(n), \qquad N \in \mathbb N. 
\end{equation}

\begin{prop}\label{thm:SumOfEulerFunction}
For $N \gg 1$,  
\begin{equation}
\Phi(N) 
= C N^2 + O\Big( N \log N \Big), 
\qquad \text{ where } \qquad C = \frac12 \, \sum_{n=1}^\infty \frac{\mu(n)}{n^2} = \frac{3}{\pi^2}
\end{equation}
\end{prop}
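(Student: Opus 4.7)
The plan is to use the standard M\"obius inversion identity for the Euler totient function,
\begin{equation}
\varphi(n) = \sum_{d \mid n} \mu(d) \, \frac{n}{d},
\end{equation}
and then swap the order of summation. Writing $n = dk$,
\begin{equation}
\Phi(N) = \sum_{n=1}^N \sum_{d \mid n} \mu(d) \, \frac{n}{d} = \sum_{d=1}^N \mu(d) \sum_{k=1}^{\lfloor N/d \rfloor} k = \frac{1}{2} \sum_{d=1}^N \mu(d) \, \lfloor N/d \rfloor \bigl( \lfloor N/d \rfloor + 1 \bigr).
\end{equation}

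Next I would use $\lfloor N/d \rfloor = N/d + O(1)$ to expand the inner quantity as $\tfrac{1}{2}(N/d)^2 + O(N/d)$. This gives
\begin{equation}
\Phi(N) = \frac{N^2}{2} \sum_{d=1}^N \frac{\mu(d)}{d^2} + O\Bigl( N \sum_{d=1}^N \frac{1}{d} \Bigr) = \frac{N^2}{2} \sum_{d=1}^N \frac{\mu(d)}{d^2} + O(N \log N).
\end{equation}

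To finish, I would complete the finite sum to an infinite one using the trivial tail bound
\begin{equation}
\Big| \sum_{d=N+1}^\infty \frac{\mu(d)}{d^2} \Big| \leq \sum_{d=N+1}^\infty \frac{1}{d^2} = O\Bigl(\frac{1}{N}\Bigr),
\end{equation}
which contributes an error of order $N^2 \cdot (1/N) = N$, absorbed by $O(N \log N)$. This identifies the leading coefficient as $C = \tfrac{1}{2} \sum_{d=1}^\infty \mu(d)/d^2$. The identification $\sum_{d=1}^\infty \mu(d)/d^2 = 1/\zeta(2) = 6/\pi^2$ is the classical evaluation coming from the Dirichlet convolution $\mu \ast 1 = \delta_1$, giving $C = 3/\pi^2$.

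None of the steps present a real obstacle; the only delicate point is keeping track of the error term when replacing $\lfloor N/d \rfloor$ by $N/d$, which is why $\log N$ appears. Everything else is bookkeeping.
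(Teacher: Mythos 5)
Your proof is correct and follows essentially the same path as the paper's: Möbius inversion $\varphi(n) = \sum_{d\mid n}\mu(d)\,n/d$, swap the order of summation to get $\tfrac12\sum_{d\le N}\mu(d)\lfloor N/d\rfloor(\lfloor N/d\rfloor+1)$, replace the floors by $N/d + O(1)$ to extract the main term and the $O(N\log N)$ error, and complete the partial sum $\sum_{d\le N}\mu(d)/d^2$ to the full series with a tail bound of order $1/N$. The only superficial difference is notational (you write $n=dk$ where the paper writes $n=dd'$).
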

\begin{proof}
By the M\"obius inversion formula, 
\begin{equation}
\Phi(N) 
= \sum_{n = 1}^N \varphi(n) 
= \sum_{n=1}^N n \bigg( \sum_{d \mid n} \frac{\mu(d)}{d} \bigg) 
= \sum_{n=1}^N \sum_{d \mid n} \frac{n}{d}\, \mu(d).
\end{equation}
Calling $n/d = d'$, the sum is in all natural numbers $d$ and $d'$ such that $d d'  \leq N$. 
Therefore, 
\begin{equation}
\Phi(N) = \sum_{d, d' \, : \, dd' \leq N} d' \mu(d) = \sum_{d=1}^N \mu(d) \, \sum_{d'=1}^{\lfloor N/d \rfloor } d'  
= \sum_{d=1}^N \mu(d) \, \frac{ \lfloor N/d \rfloor \left( \lfloor N/d \rfloor + 1 \right) }{2}.
\end{equation}
For $x \in \mathbb R$, write $x = \lfloor x \rfloor + \{ x \}$, where $0 \leq \{ x \} < 1$ is the fractional part of $x$. Then, direct computation shows that $\lfloor x \rfloor \left( \lfloor x \rfloor + 1 \right) = x^2 + O(x)$ when $x \geq 1$, so 
\begin{equation}
\Phi(N) = \frac{1}{2} \, \sum_{d=1}^N \mu(d) \, \bigg( \Big( \frac{N}{d} \Big)^2 + O\Big(\frac{N}{d} \Big) \bigg) = \frac{N^2}{2} \sum_{d=1}^N \frac{\mu(d)}{d^2} + O\left( N \, \sum_{d=1}^N \frac{1}{d} \right).
\end{equation}
The series $\sum_{d=1}^\infty \mu(d)/d^2$ is absolutely convergent,
and its value is known to be $2C = 6/\pi^2$, 
so write 
\begin{equation}
\sum_{d=1}^N \frac{\mu(d)}{d^2} 
= 2C - \sum_{d=N+1}^\infty \frac{\mu(d)}{d^2} 
= 2C + O\bigg(  \sum_{d=N+1}^\infty \frac{1}{d^2} \bigg) 
= 2C + O\Big(  \frac{1}{N} \Big).
\end{equation}
Since 
$\sum_{d=1}^N 1/d \simeq \log N$, 
we get $\Phi(N) = C\, N^2 + O( N ) + O( N \log N) = CN^2 + O( N \log N)$. 
\end{proof}

As a Corollary of Lemma~\ref{thm:SumOfEulerFunction}
we obtain the analogue result for the sums
weighted by $n^{-\alpha}$. 
Observe that when $\alpha > 2$ the sum is convergent. 
\begin{cor}\label{thm:SumOfEulerWithWeight}
Let $\alpha \leq 2$. For $N \gg 1$, 
\begin{equation}
\sum_{n=1}^N \frac{\varphi(n)}{n^2} \simeq \log N, \qquad \text{ and } \qquad \sum_{n=1}^N \frac{\varphi(n)}{n^\alpha} \simeq N^{2 -  \alpha}, \quad \text{ if } \, \alpha < 2.
\end{equation}
\end{cor}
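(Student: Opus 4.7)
The plan is to derive Corollary~\ref{thm:SumOfEulerWithWeight} from Proposition~\ref{thm:SumOfEulerFunction} via Abel summation (summation by parts), converting information about the partial sum $\Phi(N) = \sum_{n \leq N} \varphi(n)$ into information about the weighted partial sum $\sum_{n \leq N} \varphi(n) / n^\alpha$.

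First I would write, via summation by parts,
\begin{equation}
\sum_{n=1}^N \frac{\varphi(n)}{n^\alpha} = \frac{\Phi(N)}{N^\alpha} + \sum_{n=1}^{N-1} \Phi(n) \Big(  \frac{1}{n^\alpha} - \frac{1}{(n+1)^\alpha} \Big).
\end{equation}
Since $\frac{1}{n^\alpha} - \frac{1}{(n+1)^\alpha} = \alpha \, n^{-\alpha - 1} + O(n^{-\alpha - 2})$ for $n \geq 1$, substituting the asymptotic $\Phi(n) = C n^2 + O(n \log n)$ from Proposition~\ref{thm:SumOfEulerFunction} the main contribution becomes
\begin{equation}
C\alpha \sum_{n=1}^{N-1} \frac{1}{n^{\alpha - 1}} + O\Big(  \sum_{n=1}^{N-1} \frac{\log n}{n^\alpha}  \Big) + \frac{\Phi(N)}{N^\alpha}.
\end{equation}

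Next I would evaluate these sums, distinguishing $\alpha < 2$ from $\alpha = 2$. For $\alpha < 2$, the dominant sum $\sum_{n \leq N} n^{1 - \alpha} \simeq N^{2 - \alpha}$, the error sum $\sum_{n \leq N} \log n / n^\alpha = O(N^{1-\alpha} \log N)$, and the boundary term $\Phi(N)/N^\alpha \simeq N^{2 - \alpha}$. All together these combine to $\simeq N^{2-\alpha}$ since the error is of strictly lower order. For $\alpha = 2$, the main sum $\sum_{n \leq N} n^{-1} \simeq \log N$, the error sum $\sum_{n \leq N} \log n / n^2 = O(1)$, and the boundary term is $O(1)$. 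These combine to $\simeq \log N$. In both cases, since $C > 0$, one obtains matching lower and upper bounds, yielding the $\simeq$ claimed.

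The argument is essentially routine; there is no real obstacle beyond verifying that the error terms coming from Proposition~\ref{thm:SumOfEulerFunction} are of lower order than the main term produced by Abel summation. The mildly delicate case is $\alpha = 2$, where the leading term is only logarithmic and one has to ensure the $O(N \log N)$ error in $\Phi$ does not contaminate it — which it does not, because after summation by parts the error contributes $O(\int_1^N \log t \, / t^2 \, dt) = O(1)$.
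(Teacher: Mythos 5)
Your proof takes a genuinely different route from the paper's. The paper handles the lower bound case by case: for $0 \leq \alpha < 2$ it simply pulls $N^{-\alpha}$ out of the whole sum to get $\sum_{n\leq N} \varphi(n)/n^\alpha \geq N^{-\alpha}\Phi(N) \simeq N^{2-\alpha}$; for $\alpha = 2$ it performs summation by parts and then restricts the resulting sum to $n \gtrsim \log N$; and for $\alpha < 0$, where $n^{-\alpha}$ is increasing so the pull-out trick fails, it instead restricts to $n \in [N/2, N]$ and uses $\Phi(N) - \Phi(N/2) \simeq N^2$. You apply Abel summation uniformly for all $\alpha \leq 2$ and substitute the asymptotic $\Phi(n) = Cn^2 + O(n\log n)$ everywhere. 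That is a cleaner, single-case argument and a legitimate alternative.

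There is, however, a point you have glossed over, precisely in the regime $\alpha < 0$ which you do not single out. After Abel summation the two principal contributions are the boundary term $\Phi(N)/N^\alpha \sim C N^{2-\alpha}$ and the interior term $C\alpha \sum_{n<N} n^{1-\alpha} \sim \tfrac{C\alpha}{2-\alpha}N^{2-\alpha}$. When $\alpha < 0$ the factor $C\alpha$ is negative, so these two pieces have the same order $N^{2-\alpha}$ but opposite signs; the statement "all together these combine to $\simeq N^{2-\alpha}$" is not automatic and does not follow merely from the $O$-errors being lower order. One must verify that the leading coefficients do not cancel. They do not, since $C\bigl(1 + \tfrac{\alpha}{2-\alpha}\bigr) = \tfrac{2C}{2-\alpha} > 0$ for all $\alpha < 2$, but you should make this explicit. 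Your closing paragraph claims the only thing to check is that the error terms from Proposition~\ref{thm:SumOfEulerFunction} are lower order and that the only delicate case is $\alpha = 2$; that misses this sign issue for negative $\alpha$, which is exactly the case the paper's proof treats separately.
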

\begin{proof}
Upper bounds immediately follow from $\varphi(n) \leq n$. 
For lower bounds, 
assume first that $\alpha \geq 0$. 
From Proposition~\ref{thm:SumOfEulerFunction} 
we directly get 
\begin{equation}
\sum_{n=1}^N \frac{\varphi(n)}{n^\alpha} 
\geq \frac{1}{N^\alpha} \sum_{n=1}^N \varphi(n)
= \frac{1}{N^\alpha} \Phi(N) \simeq N^{2- \alpha},  
\end{equation}
which is optimal when $\alpha < 2$. 
For the case $\alpha = 2$ we use the summation by parts formula\footnote{Let $a_n$ and $b_n$ be two sequences,
and let $B_N = \sum_{n=1}^N b_n$. 
Then,  
$
\sum_{n=1}^N a_n b_n = a_N B_N - \sum_{n=1}^{N-1} B_n (a_{n+1} - a_n).
$
}
to get 
\begin{equation}\label{eq:Sum_By_Parts_Applied}
\sum_{n=1}^N \frac{\varphi(n)}{n^2} 
 = \frac{\Phi(N)}{N^2} - \sum_{n=1}^{N-1} \Phi(n) \Big(\frac{1}{(n+1)^2 } - \frac{1}{n^2} \Big) 
 = \frac{\Phi(N)}{N^2} + \sum_{n=1}^{N-1} \Phi(n) \frac{ 2n+1}{n^2 \, (n+1)^2 }.
\end{equation}
Restrict the sum to 
$\log N \leq n \leq N-1$, 
and combine it with $\Phi(n) \simeq n^2$ for $n \gg 1$ from Proposition~\ref{thm:SumOfEulerFunction}
to get
\begin{equation}
\sum_{n=1}^N \frac{\varphi(n)}{n^2} 
\gtrsim 1 + \sum_{n \geq \log N}^{N-1}  \frac{ 1}{n}
\simeq \log N - \log \log N \simeq \log N, 
\qquad \text{ for } \, N \gg 1. 
\end{equation}
When $\alpha < 0$, restrict the sum to $n \in [N/2, N]$ and use $\Phi(N) = C N^2 + O(N\log N)$ in Proposition~\ref{thm:SumOfEulerFunction} to get
\[
\sum_{n=1}^N \frac{\varphi(n)}{n^\alpha} 
= \sum_{n=1}^N \varphi(n) \, n^{|\alpha|} 
\geq \Big(\frac{N}{2}\Big)^{|\alpha|} \, \sum_{n \geq N/2}^N \varphi(n)
\simeq_{|\alpha|} \frac{\Phi(N) - \Phi(N/2)}{N^\alpha}
\simeq N^{2 - \alpha}. 
\qedhere
\]
\end{proof}

\subsection{Sums of Euler's totient function modulo $\boldsymbol{Q}$}\label{sec:SumsEulerModQ}
For $Q \in \mathbb N$, let
\begin{equation}
\Phi_Q(N) = \sum_{n=1}^N \varphi(Qn) \qquad \text{ when } \, N \gg 1,
\end{equation}
To estimate the behavior when $N \to \infty$
we adapt the proofs of Proposition~\ref{thm:SumOfEulerFunction}
and Corollary~\ref{thm:SumOfEulerWithWeight}. 
\begin{prop}\label{thm:SumOfEulerFunctionQ}
Let $Q \in \mathbb N$. 
Then, $\Phi_Q(N) \leq QN^2$, and there exists a constant $c_Q > 0$ such that 
\begin{equation}
\Phi_Q(N) \geq c_Q N^2 + O_Q( N \log N).  
\end{equation}
Consequently,
 $\Phi_Q(N) \simeq _Q N^2$ when $N \gg 1$.  
\end{prop}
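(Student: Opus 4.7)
The proof splits cleanly into the two bounds. The upper bound is immediate from $\varphi(m) \leq m$: summing $\varphi(Qn) \leq Qn$ over $n \leq N$ yields $\Phi_Q(N) \leq Q\, N(N+1)/2 \leq QN^2$.

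For the lower bound, my plan is to reduce to Proposition~\ref{thm:SumOfEulerFunction} via the pointwise inequality
\begin{equation}\label{eq:plan_multiplicativity}
\varphi(Qn) \geq \varphi(Q)\, \varphi(n), \qquad \forall \, n, Q \in \mathbb{N}.
\end{equation}
The Euler product $\varphi(m) = m \prod_{p \mid m}(1-1/p)$ expresses both sides as $Qn$ times a product over primes. The only difference is that any prime dividing both $Q$ and $n$ contributes the factor $(1-1/p) \in (0,1]$ once on the left of \eqref{eq:plan_multiplicativity} and twice on the right, so the left-hand side is at least as large. Summing \eqref{eq:plan_multiplicativity} over $n \leq N$ and invoking Proposition~\ref{thm:SumOfEulerFunction} gives
\begin{equation}
\Phi_Q(N) \geq \varphi(Q) \, \Phi(N) = C \varphi(Q)\, N^2 + O_Q(N \log N),
\end{equation}
which is the desired lower bound with $c_Q = C\varphi(Q) = 3\varphi(Q)/\pi^2 > 0$. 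Together with the upper bound this yields $\Phi_Q(N) \simeq_Q N^2$ for $N \gg 1$.

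I do not expect a serious obstacle. The only nontrivial ingredient is the elementary inequality \eqref{eq:plan_multiplicativity}, which morally encodes that adjoining $n$ to $Q$ can only re-use prime factors already present in $Q$; in particular one avoids any direct M\"obius inversion on $Qn$. An alternative route would be to mimic the proof of Proposition~\ref{thm:SumOfEulerFunction} directly, writing $\varphi(Qn)/(Qn) = \sum_{d \mid Qn} \mu(d)/d$ and splitting divisors $d$ according to $\gcd(d,Q)$, but this is considerably more cumbersome and the multiplicativity estimate is cleaner.
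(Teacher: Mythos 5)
Your proof is correct, and it takes a genuinely different (and shorter) route than the paper's.

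The paper restricts the sum to indices $n$ coprime to $Q$, where the exact multiplicativity $\varphi(Qn) = \varphi(Q)\varphi(n)$ holds, and then must re-run the entire M\"obius inversion argument from Proposition~\ref{thm:SumOfEulerFunction} under the coprimality constraint; this in turn requires the auxiliary Lemma~\ref{thm:LemmaModM1} to evaluate sums of residues coprime to $Q$, and some bookkeeping for the error terms. You instead observe that the super-multiplicativity inequality $\varphi(Qn) \geq \varphi(Q)\varphi(n)$ holds for \emph{all} $n$, not just those coprime to $Q$, because any common prime factor $p$ of $Q$ and $n$ contributes the factor $(1-1/p)$ once on the left and twice on the right in the Euler product. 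Your Euler-product verification of this inequality is correct. Summing over $n \leq N$ and invoking Proposition~\ref{thm:SumOfEulerFunction} directly then gives $\Phi_Q(N) \geq \varphi(Q)\Phi(N) = C\varphi(Q) N^2 + O_Q(N\log N)$, bypassing the restricted M\"obius inversion and Lemma~\ref{thm:LemmaModM1} entirely. What the two approaches ``buy'' is comparable for this proposition: both yield $\Phi_Q(N) \simeq_Q N^2$, with different (but immaterial) constants $c_Q$. Your route is cleaner and more economical; the paper's route has the mild side benefit of giving the explicit constant $c_Q = \frac{\varphi(Q)^2}{2Q}\sum_{(d,Q)=1}\mu(d)/d^2$, but nothing in the article uses this precision.
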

\begin{proof}
The upper bound follows directly from $\varphi(n) < n$ for all $n \in \mathbb N$, 
so it suffices to prove the lower bound. 
For that, 
first restrict the sum to $n \leq N$ such that $(Q,n) = 1$. 
By the multiplicative property of the Euler function, 
we get  
\begin{equation}\label{eq:Phi_Q_Lower_Bound}
\Phi_Q(N) \geq \sum_{\substack{ n=1 \\ (Q,n) = 1  } }^N \varphi(Qn)  =  \varphi(Q) \sum_{\substack{ n=1 \\  (Q,n) = 1  } }^N \varphi(n). 
\end{equation}
The proof now follows the same strategy as in Proposition~\ref{thm:SumOfEulerFunction}. 
Use M\"obius inversion to write
\begin{equation}
\sum_{\substack{ n=1 \\  (Q,n) = 1  } }^N \varphi(n)
= \sum_{\substack{ n=1 \\ (Q,n) = 1  } }^N \Bigg( n \sum_{d \mid n} \frac{\mu(d)}{d} \Bigg)
= \sum_{\substack{ n=1 \\ (Q,n) = 1  } }^N \sum_{d \mid n} \, \frac{n}{d} \,  \mu(d). 
\end{equation}
Observe that if $(Q,n)=1$ and if we decompose $n = d \, d'$, then both $d$ and $d'$ are coprime with $Q$. 
Conversely, if $d$ and $d'$ are coprime with $Q$, then so is $n = d \, d'$. Thus, 
\begin{equation}\label{eq:LowerBoundCoprimes}
\sum_{\substack{ n=1 \\  (Q,n) = 1  } }^N \varphi(n) 
=  \sum_{\substack{ d, d' \, : \, d \, d' \leq N \\  (Q,d) = 1 =  (Q,d')  } } d' \,  \mu(d)
=  \sum_{\substack{ d=1 \\  (Q,d) = 1  } }^N \mu(d)  \Bigg( \sum_{\substack{ d'=1 \\  (Q,d') = 1  } }^{\lfloor N/d \rfloor} d' \Bigg). 
\end{equation}
In the following lemma we give a closed formula  for the inner sum. 
We postpone its proof. 
\begin{lem}\label{thm:LemmaModM1}
Let $Q \in \mathbb N$, $Q \geq 2$. Then, 
\begin{equation}
S_Q = \sum_{\substack{ n = 1 \\ (Q,n) = 1 }}^{Q-1} n = \frac{Q \,  \varphi(Q)}{2},  \qquad \text{ and } \qquad S_{Q,k} = \sum_{\substack{ n = 1 \\ (Q,n) = 1 }}^{kQ - 1} n = \frac{Q \, \varphi(Q)}{2} \, k^2, \quad \forall  k \in \mathbb N.
\end{equation}
\end{lem}
Now, 
for every $d \leq N$, 
find $k_d \in \mathbb N\cup \{ 0 \}$ such that $k_dQ \leq \lfloor N/d \rfloor < (k_d+1)Q$, and write 
\begin{equation}\label{eq:InnerSumCoprimes}
\sum_{\substack{ d'=1 \\  (Q,d') = 1  } }^{\lfloor N/d \rfloor} d'  
= \sum_{\substack{ d'=1 \\  (Q,d') = 1  } }^{k_dQ - 1} d'  + \sum_{\substack{ d' = k_dQ + 1 \\  (Q,d') = 1  } }^{\lfloor N/d \rfloor} d'  = S_{Q,k_d} + O\Big(  (k_d+1)Q^2 \Big)
= \frac{Q \, \varphi(Q)}{2} \, k_d^2 + O\Big( (k_d+1)Q^2 \Big).
\end{equation}
Since the definition of $k_d$ is equivalent to 
$\frac{1}{Q} \, \lfloor  N/d\rfloor - 1  <  k_d \leq \frac{1}{Q} \, \lfloor  N/d\rfloor$,
we deduce that
$k_d = \lfloor \frac{1}{Q} \lfloor N/d \rfloor  \rfloor$.
Consequently, 
since $\lfloor x \rfloor = x + O(1)$ 
and $\lfloor x \rfloor^2 = x^2 + O(x)$, we get
\begin{equation}\label{eq:EstimateK_d_And_KdSquare}
k_d 
= \frac{N}{Qd} + O(1)
\qquad \text{ and } \qquad 
k_d^2 = \frac{N^2}{Q^2d^2} + \frac{1}{Q} \, O\Big( \frac{N}{d} \Big).
\end{equation}
Hence, from \eqref{eq:InnerSumCoprimes} and \eqref{eq:EstimateK_d_And_KdSquare} we get 
\begin{equation}
\sum_{\substack{ d'=1 \\  (Q,d') = 1  } }^{\lfloor N/d \rfloor} d'  
= \frac{ \varphi(Q)}{2Q}\, \frac{N^2}{ d^2} 
+ O \left(   
\varphi(Q)\, \frac{N}{d} + Q\frac{N}{d} + Q^2
\right)
= \frac{ \varphi(Q)}{2Q}\, \frac{N^2}{ d^2} 
+ Q^2 \, O \bigg( \frac{N}{d}\bigg).
\end{equation}
We plug this in \eqref{eq:LowerBoundCoprimes} to get
\begin{equation}
\sum_{\substack{ n=1 \\  (Q,n) = 1  } }^N \varphi(n) 
= \frac{\varphi(Q)}{2Q} N^2 \sum_{\substack{ d=1 \\  (Q,d) = 1  } }^N 
\frac{\mu(d)}{ d^2} 
+   O \Big(  Q^2 N \sum_{\substack{ d=1 \\  (Q,d) = 1  } }^N \frac{\mu(d)}{d} 
\Big). 
\end{equation}
The sum $\sum_{n=1}^\infty \mu(d)/d^2$ is absolutely convergent, and $c_Q := \sum_{ d=1, \,  (Q,d) = 1  }^\infty \mu(d)/d^2 > 0$ because 
\begin{equation}
c_Q = 1 + \sum_{\substack{ d=2 \\  (Q,d) = 1  } }^\infty  \frac{\mu(d)}{d^2} 
\qquad \text{ and } \qquad 
\Bigg| \sum_{\substack{ d=2 \\  (Q,d) = 1  } }^\infty  \frac{\mu(d)}{d^2}  \Bigg| \leq \frac{\pi^2}{6} - 1 < 1.
\end{equation}
Hence, 
\begin{equation}
\sum_{\substack{ d=1 \\  (Q,d) = 1  } }^N  \frac{\mu(d)}{d^2} = c_Q - \sum_{\substack{ d=N+1 \\  (Q,d) = 1  } }^\infty  \frac{\mu(d)}{d^2}
= c_Q + O\Big( \sum_{ d=N+1 }^\infty  \frac{1}{d^2}  \Big) = c_Q + O(1/N). 
\end{equation}
Together with $|\sum_{d=1, \, (Q,d)=1}^N \mu(d)/d| \lesssim \log N$, 
this implies
\begin{equation}
\sum_{\substack{ n=1 \\  (Q,n) = 1  } }^N \varphi(n) 
= c_Q\, \frac{\varphi(Q)}{2Q} N^2 + O\Big( \frac{\varphi(Q)}{Q} N \Big) + O (Q^2 N \log N)
 =  c_Q\, \frac{\varphi(Q)}{2Q} N^2 + O_Q(N \log N).
\end{equation}
Together with \eqref{eq:Phi_Q_Lower_Bound} we conclude $\Phi_Q(N) \geq c_Q\, \frac{\varphi(Q)^2}{2Q} N^2 + O_Q(N \log N)$. 
\end{proof}

\begin{proof}[Proof of Lemma~\ref{thm:LemmaModM1}]
We begin with $k=1$. 
When $Q=2$, we have $S_{2,1} = 1 = 2\, \varphi(2)/2$, 
so we may assume $Q \geq 3$. 
We first observe that $\varphi(Q)$ is even, 
because if $Q$ has an odd prime factor $p$, 
then $\varphi(p) = p-1$, which is even, 
is a factor of $\varphi(Q)$. 
Otherwise, $Q = 2^r$ with $r \geq 2$, 
so $\varphi(Q) = 2^{r-1}$ is even.
Now, the observation that 
$(Q,n) = 1 \, \Longleftrightarrow \, (Q,Q-n)=1$
implies
\begin{equation}
S_{Q,1}
= \sum_{\substack{ n = 1 \\ (Q,n) = 1 }}^{\lfloor Q/2 \rfloor} n +  \sum_{\substack{ n = \lfloor Q/2 \rfloor + 1 \\ (Q,n) = 1 }}^{Q - 1} n 
= \sum_{\substack{ n = 1 \\ (Q,n) = 1 }}^{\lfloor Q/2 \rfloor} \big(n + (Q - n) \big)
= Q \,  \frac{\varphi(Q)}{2}.
\end{equation}
Let now $k \geq 2$, so that
\begin{equation}
 \sum_{\substack{ n = (k-1)Q + 1 \\ (Q,n) = 1 }}^{kQ-1} n
= \sum_{\substack{ n =  1 \\ (Q,n) = 1 }}^{Q-1} \bigg( n + (k-1)Q \bigg)
= S_{Q,1} + (k-1)Q\varphi(Q)
= Q \varphi(Q) \Big( k - \frac12 \Big).
\end{equation}
Consequently, 
\[
S_{Q, k} 
= \sum_{\ell=1}^k \Bigg(  \sum_{\substack{ n = (\ell - 1) Q + 1 \\ (Q,n) = 1 }}^{\ell Q} n \Bigg)
= \sum_{\ell=1}^k  Q \varphi(Q) \Big( \ell -  \frac12 \Big)
= \frac{Q\varphi(Q)}{2} k^2.
\qedhere
\]
\end{proof}

To conclude, 
we prove the estimates for the weighted sums 
that we needed in Lemma~\ref{thm:SumOfEulerQWithWeight}
as a corollary of Proposition~\ref{thm:SumOfEulerFunctionQ}. 
As before, when $\alpha > 2$ the sums are absolutely convergent. 
\begin{cor}[Lemma~\ref{thm:SumOfEulerQWithWeight}]\label{thm:Appendix_Corollary}
Let $Q \in \mathbb N$ and $\alpha \leq 2$. 
For $N \gg 1$,  
\begin{equation}
\sum_{n=1}^N \frac{\varphi(Qn)}{n^2} \simeq \log N, 
\qquad \text{ and } \qquad 
\sum_{n=1}^N \frac{\varphi(Qn)}{n^\alpha} 
\simeq N^{2 -  \alpha} \quad \text{ for } \quad \alpha < 2.
\end{equation}
The implicit constants depend on $Q$, and also on $\alpha$ when $\alpha < 0$.  
\end{cor}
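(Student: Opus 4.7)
The plan is to mirror the proof of Corollary~\ref{thm:SumOfEulerWithWeight}, replacing Proposition~\ref{thm:SumOfEulerFunction} with the quantitative estimate $\Phi_Q(N) \simeq_Q N^2$ supplied by Proposition~\ref{thm:SumOfEulerFunctionQ}. For the upper bounds, I would exploit the trivial inequality $\varphi(Qn) \leq Q n$, which gives
\begin{equation}
\sum_{n=1}^N \frac{\varphi(Qn)}{n^\alpha} \leq Q\, \sum_{n=1}^N \frac{1}{n^{\alpha - 1}} \lesssim_{Q,\alpha} \begin{cases} \log N, & \alpha = 2, \\ N^{2-\alpha}, & \alpha < 2, \end{cases}
\end{equation}
with the $\alpha$-dependence of the constant only playing a role in the final claim when $\alpha < 0$ (where $n^{1-\alpha}$ is increasing and the Riemann sum approximation contributes a factor $1/(2-\alpha)$).

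For the lower bounds when $0 \leq \alpha < 2$, the one-term estimate suffices: keeping only $n = N$ and invoking Proposition~\ref{thm:SumOfEulerFunctionQ} gives
\begin{equation}
\sum_{n=1}^N \frac{\varphi(Qn)}{n^\alpha} \geq \frac{\Phi_Q(N)}{N^\alpha} \gtrsim_Q N^{2-\alpha}, \qquad N \gg 1.
\end{equation}
When $\alpha < 0$, I would instead restrict the sum to the range $N/2 \leq n \leq N$, factor out $(N/2)^{|\alpha|}$, and apply $\Phi_Q(N) - \Phi_Q(N/2) \simeq_Q N^2$, which again follows from Proposition~\ref{thm:SumOfEulerFunctionQ}. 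This step is where the implicit constant acquires its dependence on $\alpha$.

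The only case requiring more than a single-term estimate is $\alpha = 2$, where I would argue by Abel summation exactly as in \eqref{eq:Sum_By_Parts_Applied}:
\begin{equation}
\sum_{n=1}^N \frac{\varphi(Qn)}{n^2} = \frac{\Phi_Q(N)}{N^2} + \sum_{n=1}^{N-1} \Phi_Q(n)\, \frac{2n+1}{n^2(n+1)^2}.
\end{equation}
Proposition~\ref{thm:SumOfEulerFunctionQ} provides a threshold $n_0 = n_0(Q) \in \mathbb N$ such that $\Phi_Q(n) \gtrsim_Q n^2$ for all $n \geq n_0$. Restricting the Abel sum to $n_0 \leq n \leq N - 1$ (and discarding the remaining nonnegative terms), the summand is $\gtrsim_Q 1/n$, so the tail contributes $\gtrsim_Q \log N - \log n_0 \simeq_Q \log N$ for $N \gg_Q 1$, as required. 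No genuine obstacle arises, since Proposition~\ref{thm:SumOfEulerFunctionQ} already encodes the two-sided comparison $\Phi_Q(N) \simeq_Q N^2$ that drives every step; the proof of the corollary is essentially a verbatim transcription of the proof of Corollary~\ref{thm:SumOfEulerWithWeight} with $\Phi$ replaced by $\Phi_Q$.
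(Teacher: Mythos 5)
Your proposal follows the paper's proof essentially line by line: upper bounds from $\varphi(Qn)\leq Qn$, lower bound for $0 \leq \alpha < 2$ via $\Phi_Q(N)/N^\alpha$, lower bound at $\alpha = 2$ by Abel summation, and lower bound for $\alpha < 0$ by truncating the sum to a dyadic-type window. However, there is a genuine gap in the $\alpha < 0$ case as you have written it.

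You truncate to $[N/2, N]$ and assert that $\Phi_Q(N) - \Phi_Q(N/2) \simeq_Q N^2$ ``follows from Proposition~\ref{thm:SumOfEulerFunctionQ}.'' It does not. Proposition~\ref{thm:SumOfEulerFunctionQ} only supplies a lower bound $\Phi_Q(N) \geq c_Q N^2 + O_Q(N\log N)$ together with the trivial upper bound $\Phi_Q(N/2) \leq Q (N/2)^2$, so all you can conclude is
\begin{equation}
\Phi_Q(N) - \Phi_Q(N/2) \geq \Big( c_Q - \frac{Q}{4} \Big) N^2 + O_Q(N\log N),
\end{equation}
and the leading coefficient can easily be negative: the proof of Proposition~\ref{thm:SumOfEulerFunctionQ} produces $c_Q = \big(\sum_{(d,Q)=1}\mu(d)/d^2\big)\varphi(Q)^2/(2Q)$, which for $Q=6$ equals $3/\pi^2 \approx 0.3$, whereas $Q/4 = 1.5$. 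The paper avoids this by leaving the lower cutoff as a free parameter $\delta N$ and then choosing $\delta = \delta(Q)$ so small that $c_Q - Q\delta^2 > 0$. Your claim can also be repaired by instead observing that $\varphi(Qn) \geq \varphi(n)$ for every $n$, whence $\Phi_Q(N) - \Phi_Q(N/2) \geq \Phi(N) - \Phi(N/2) \gtrsim N^2$ by Proposition~\ref{thm:SumOfEulerFunction} --- but that monotonicity is an additional fact, not contained in the statement of Proposition~\ref{thm:SumOfEulerFunctionQ}. Apart from this, the remaining cases are correct and match the paper (your choice of a fixed threshold $n_0(Q)$ in the $\alpha = 2$ case, versus the paper's $\log N$, is an immaterial variation).
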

\begin{proof}
Upper bounds follow directly from $\varphi(n) \leq n$.
Lower bounds follow from Proposition~\ref{thm:SumOfEulerFunctionQ} with the same strategy as in the proof of Corollary~\ref{thm:SumOfEulerWithWeight}. 
If $\alpha \geq 0$, by Proposition~\ref{thm:SumOfEulerFunctionQ} we get
\begin{equation}
\sum_{n=1}^N \frac{\varphi(Qn)}{n^\alpha} 
\geq \frac{1}{N^\alpha}\, \Phi_Q(N) 
\simeq_Q N^{2-\alpha}, \qquad \text{ when } N \gg 1. 
\end{equation}
When $\alpha = 2$, combine 
Proposition~\ref{thm:SumOfEulerFunctionQ} with
summing by parts
as in \eqref{eq:Sum_By_Parts_Applied}
to get 
\begin{equation}
\sum_{n=1}^N \frac{\varphi(Qn)}{n^2} 
 = \frac{\Phi_Q(N)}{N^2} + \sum_{n=1}^{N-1} \Phi_Q(n) \frac{ 2n+1}{n^2 \, (n+1)^2 }
 \gtrsim 1 + \sum_{n=\log N}^{N-1}  \frac{ 1}{n }
 \simeq \log N. 
\end{equation} 
When $\alpha < 0$, choosing $\delta > 0$ small enough depending on $Q$,  
Proposition~\ref{thm:SumOfEulerFunctionQ}  implies
\[
\sum_{n=1}^N \frac{\varphi(Qn)}{n^\alpha} 
\geq_\alpha N^{|\alpha|} \sum_{n=\delta N}^N \varphi(Qn)
= N^{|\alpha|} \Big( \Phi_Q(N) - \Phi_Q(\delta N) \Big) 
\simeq_{Q,\alpha} N^{|\alpha|} N^2 
= N^{2-\alpha}. 
\qedhere
\]
\end{proof}

\section{Alternative asymptotic behavior of $R_{x_0}$ around rational $t$.}
\label{sec:Asymptotic_Alternative}

Following Duistermaat \cite{Duistermaat1991}, 
we give an alternative asymptotic behavior of $R_{x_0}$ around rationals that complements Corollary~\ref{thm:CorollaryAsymptotic} and allows us to prove Propositions~\ref{thm:Holder_Lower_Bound_At_Rationals}
and \ref{thm:UpperBound34MainText}.

\begin{prop}\label{thm:Asymptotic_Alternative}
Let $x_0 \in \mathbb R$. Let $p,q \in \mathbb N$ be such that $(p,q)=1$. 
Let $x_q = \operatorname{dist}(x_0, \mathbb Z/q)$.
Let $h \neq 0$ and denote $\operatorname{sign}(h) = \pm$ so that $h = \pm |h|$. 
If $x_q = 0$,
	\begin{equation}\label{eq:Asymptotic_xq_Equal_0}
\begin{split}
&R_{x_0}\Big( \frac{p}{q} + h \Big) - R_{x_0}\Big( \frac{p}{q} \Big) + 2\pi i h   \\
& \qquad   \quad = 2\pi (- 1 \pm i) \frac{\sqrt{|h|}}{\sqrt{q}} \frac{ G(p,m_q,q)}{\sqrt{q}} 
+  2 (1 \pm i) \, q^{3/2} |h|^{3/2} \,\sum_{m \neq 0} \, \frac{ G(p,m_q + m,q)}{\sqrt{q}} \,  \frac{e^{- 2\pi i \frac{m^2}{4 q^2 h}}}{ m^2}
 + O\left( q^{7/2} h^{5/2}  \right),
 \end{split}
\end{equation}
If $x_q \neq 0$,
	\begin{equation}\label{eq:Asymptotic_xq_Not_0}
\begin{split}
& R_{x_0}\Big( \frac{p}{q} + h \Big) - R_{x_0}\Big( \frac{p}{q} \Big) + 2\pi i h  \\
& \qquad \qquad 
= 2 (1 \pm i)  \, q^{3/2} |h|^{3/2} \,\sum_{m \in \mathbb Z} \, \frac{ G(p,m_q + m,q)}{\sqrt{q}} \,  
\frac{e^{- 2\pi i \frac{(m - qx_q)^2}{4 q^2 h}}}{ (m - qx_q)^2} \,  + O\left( q^{7/2} h^{5/2} \sum_{m \in \mathbb Z} \frac{1 }{(m - qx_q)^4}  \right).
\end{split}
\end{equation}
\end{prop}

\begin{proof}
From the definition $R_{x_0}(t) = \sum_{n \neq 0}e^{2\pi i ( n^2 t + n x_0 ) }  /n^2$, we first write
\begin{equation}
\begin{split}
R_{x_0}\Big( \frac{p}{q} + h \Big) - R_{x_0}\Big( \frac{p}{q} \Big) + 2\pi i h
& = 2 \pi i h +   \sum_{n \neq 0} \frac{e^{2\pi i n^2 h} - 1}{n^2}\, e^{2\pi i \frac{pn^2}{q}} \, e^{2\pi i n x_0} \\
& = 2\pi i h \, \sum_{n \in \mathbb Z} \Big( \int_0^1 e^{2\pi i n^2 h \tau} \, d\tau \Big) \, e^{2\pi i \frac{pn^2}{q}} \, e^{2\pi i n x_0}.
\end{split}
\end{equation}
Split the sum modulo $q$ by writing $n = mq+r$ and use the Poisson summation formula to obtain
\begin{equation}
\begin{split}
R_{x_0}\Big( \frac{p}{q} + h \Big) - R_{x_0}\Big( \frac{p}{q} \Big) + 2\pi i h 
& = 2\pi i h \, \sum_{r = 0}^{q-1} e^{2\pi i r^2 p/q} \,  \sum_{m \in \mathbb Z} \Big( \int_0^1 e^{2\pi i (mq+r)^2 h \tau} \, d\tau \Big) \,  e^{2\pi i (mq+r) x_0} \\
& = 2\pi i h \, \sum_{r = 0}^{q-1} e^{2\pi i r^2 p/q} \,  \sum_{m \in \mathbb Z} \int \Big( \int_0^1 e^{\pm 2\pi i (zq+r)^2 |h| \tau} \, d\tau \Big) \,  e^{2\pi i (zq+r) x_0} \, e^{-2\pi i m z} \, dz \\
& = \pm 2\pi i \frac{\sqrt{|h|}}{q} \,\sum_{m \in \mathbb Z} \,  \sum_{r = 0}^{q-1} e^{2\pi i \frac{p r^2 + m r }{q}} \,   \int_0^1 \int  e^{\pm 2\pi i y^2 \tau}  \,  e^{2\pi i \frac{ y}{\sqrt{|h|}} ( x_0 - \frac{m}{q})} \, dy \, d\tau.
\end{split}
\end{equation}
where we changed variables $(zq+r)^2 |h| = y^2$. Now complete the square to get
\begin{equation}\label{eq:After_Heuristic_Computations}
\begin{split}
R_{x_0}\Big( \frac{p}{q} + h \Big) - R_{x_0}\Big( \frac{p}{q} \Big) + 2\pi i h 
& = \pm 2\pi i \frac{\sqrt{|h|}}{q} \,\sum_{m \in \mathbb Z} \, G(p,m,q) \,   \int_0^1 \Big( \int  e^{\pm 2\pi i \tau \Big( y \pm \frac{x_0 - m/q}{2 \tau\sqrt{|h|}} \Big)^2}   \,   dy \Big) \, e^{\mp 2\pi i \frac{(x_0 - m/q)^2}{4\tau |h|} } d\tau  \\
& = \pm 2\pi i \frac{1\pm i}{2} \, \frac{\sqrt{|h|}}{q} \,\sum_{m \in \mathbb Z} \,  G(p,m,q) \,   \int_0^1 \frac{1}{\sqrt{\tau}} \, e^{ \mp 2\pi i \frac{(x_0 - m/q)^2}{4\tau |h|} } d\tau.
\end{split}
\end{equation}
By changing variables, and defining $x_q = \min_{m\in\mathbb Z} |x_0 - m/q| = |x_0 - m_q/q|$ as in \eqref{eq:x_q_AND_m_q}, we write
\begin{equation}\label{eq:Asymptotic_Second}
R_{x_0}\Big( \frac{p}{q} + h \Big) - R_{x_0}\Big( \frac{p}{q} \Big) + 2\pi i h  
= \pi  (-1 \pm i) \, \frac{\sqrt{|h|}}{\sqrt{q}} \,\sum_{m \in \mathbb Z} \, \frac{ G(p,m_q + m,q)}{\sqrt{q}} \,   \int_1^\infty \frac{1}{ \xi^{3/2}} \, e^{- 2\pi i \frac{(x_q - m/q)^2}{4 h} \xi} d\xi.
\end{equation}
We now separate cases.
If $x_q = 0$, the integral of the term $m = 0$ is $\int_1^\infty \xi^{-3/2}d\xi =2$.
In all other cases, that is, if either $x_q \neq 0$ or $m \neq 0$, integration by parts implies
\begin{equation}\label{eq:IntByPartsOnce}
\begin{split}
  \int_1^\infty \frac{1}{ \xi^{3/2}} \, e^{- 2\pi i \frac{(x_q - m/q)^2}{4 h} \xi} d\xi
 & = \frac{2}{\pi i}\frac{ q^2 h}{(m - qx_q)^2} \left( \, e^{- 2\pi i \frac{(m - qx_q)^2}{4 q^2 h}} + \frac{3}{2} \, \int_1^\infty \frac{1}{ \xi^{5/2}} \, e^{- 2\pi i \frac{(x_q - m/q)^2}{4 h} \xi} d\xi \right) \\
 &  = O\left( \frac{q^2 h}{(m-qx_q)^2} \right) .
  \end{split}
\end{equation}
What is more, integrating by parts again we obtain
\begin{equation}\label{eq:IntByPartsTwice}
\begin{split}
\int_1^\infty \frac{1}{ \xi^{3/2}} \, e^{- 2\pi i \frac{(x_q - m/q)^2}{4 h} \xi} d\xi
& = \frac{2}{\pi i} \, \frac{q^2 h}{ (m - qx_q)^2} \, \left(  e^{- 2\pi i \frac{(m - qx_q)^2}{4 q^2 h}} + O\Big( \frac{ q^2 h}{(m - qx_q)^2} \Big) \right).
\end{split}
\end{equation}
Combining these with \eqref{eq:Asymptotic_Second} give the desired expressions. 
\end{proof}
\begin{rem}
Computations for \eqref{eq:After_Heuristic_Computations} are made rigorous to avoid convergence problems by writing
\begin{equation}
\sum_{n \in \mathbb Z} \frac{e^{2\pi i n^2 h} - 1}{n^2}\, e^{2\pi i n^2 p/q} \, e^{2\pi i n x_0} = \lim_{\epsilon \to 0} \sum_{n \in \mathbb Z} \frac{e^{2\pi i n^2 h(1+i\epsilon)} - 1}{n^2}\, e^{2\pi i n^2 p/q} \, e^{2\pi i n x_0}.
\end{equation} 
\end{rem}

Proposition~\ref{thm:Asymptotic_Alternative} 
will allow us to give upper bounds of $\alpha_{x_0}(t)$ for general $t$. 

\begin{prop}\label{thm:UpperBound34}
Let $x_0 \in \mathbb Q$ and $t \not\in \mathbb Q$. Then, $\alpha_{x_0}(t) \leq  3/4$. 
\end{prop}
\begin{proof}
Set $x_0 = P/Q$ with $P,Q \in \mathbb N$ and $(P,Q) = 1$. 
Let $t \not\in \mathbb Q$ and let $p_n/q_n$ be its approximations by continued fractions. 
It is well-known\footnote{Because two consecutive denominators $q_n$ and $q_{n+1}$ are never both even.} that there is a subsequence of odd denominators $q_{n_k}$. 
Renaming that subsequence back to $q_n$, we may assume that all $q_n$ are odd. 
Consequently, $|G(p_n, m, q_n)| = \sqrt{q}$ for all $m, n \in \mathbb N$. 
As usual, let 
\begin{equation}
h_n =  t - \frac{p_n}{q_n}, 
\qquad|h_n| < \frac{1}{q_n^2}, 
\qquad x_{q_n} = \min_{m \in \mathbb Z} \Big| \frac{P}{Q} - \frac{m}{q_n} \Big| = \Big| \frac{P}{Q} - \frac{m_{q_n}}{q_n} \Big|,
\end{equation}
and we immediately deduce that either $x_{q_n} = 0$ or $1/Q \leq q_n x_{q_n} \leq 1/2$. 
We separate cases:
\begin{itemize}
	\item[\textbf{Case 1}] We have $x_{q_n} = 0$ for infinitely many $n \in \mathbb N$. 
	Rename that subsequence and rewrite \eqref{eq:Asymptotic_xq_Equal_0} as 
	\begin{equation}\label{eq:Asymptotic_Case_1}
\Big| R_{x_0} \Big( \frac{p_n}{q_n} + h_n \Big)  - R_{x_0}\Big( \frac{p_n}{q_n} \Big) + 2\pi i h_n   \Big| 
 = 2\pi \sqrt{2} \frac{\sqrt{|h_n|}}{\sqrt{q_n}}  + O\left( q_n^{3/2} h_n^{3/2}  \right) 
 \simeq \frac{\sqrt{|h_n|}}{\sqrt{q_n}} \left(  1 + O\big( q_n^2 h_n \big) \right).
\end{equation}
Let $\delta > 0$ which we determine later. Separate cases again:
\begin{itemize}
	\item[\textbf{Case 1.1.}] Suppose that $\left|  1 + O\big( q_n^2 h_n \big) \right| \geq \delta$ for infinitely many $n \in \mathbb N$. 
	Then, 
	\begin{equation}
	\left| R_{x_0} (t) - R_{x_0}( t - h_n ) + 2\pi i h_n   \right| 
	\geq \delta \frac{\sqrt{|h_n|}}{\sqrt{q_n}}  
	\geq \delta \, |h_n|^{3/4}, 
	\end{equation}
	because $q_n^2 |h_n| \leq 1$. 
	Hence $| R_{x_0} (t) - R_{x_0}( t - h_n )| \geq (\delta/2) \, |h_n|^{3/4}$ for infinitely many $n \in \mathbb N$,
	and consequently $\alpha_{x_0}(t) \leq 3/4$. 
	
	\item[\textbf{Case 1.2.}] We have $\left|  1 + O\big( q_n^2 h_n \big) \right| < \delta$ for all large enough $n$.
	In that case, we evaluate \eqref{eq:Asymptotic_Case_1} at a point closer to $p_n/q_n$.
	Let $\epsilon > 0$ and write \eqref{eq:Asymptotic_xq_Equal_0} for $\epsilon h_n$, so that instead of  \eqref{eq:Asymptotic_Case_1} we get
	\begin{equation}
\Big|   R_{x_0} \Big( \frac{p_n}{q_n} + \epsilon h_n \Big)  - R_{x_0}\Big( \frac{p_n}{q_n} \Big) + 2\pi i \epsilon h_n \Big|   
 \simeq \sqrt{\epsilon} \, \frac{\sqrt{|h_n|}}{\sqrt{q_n}} \left(  1 + \epsilon O\big( q_n^2 h_n \big) \right).
\end{equation}
Since $q_n^2 |h_n| < 1$ and the constant underlying the big-$O$ is universal, say $C$, choose $\epsilon \leq 1/(2C)$, in such a way that 
\begin{equation}
\Big| R_{x_0} \Big( \frac{p_n}{q_n} + \epsilon h_n \Big)  - R_{x_0}\Big( \frac{p_n}{q_n} \Big) + 2\pi i \epsilon h_n  \Big|   
\gtrsim  \frac{\sqrt{\epsilon}}{2} \, \frac{\sqrt{|h_n|}}{\sqrt{q_n}}.
\end{equation}
From this and \eqref{eq:Asymptotic_Case_1}, we write
\begin{equation}
\begin{split}
\frac{\sqrt{\epsilon}}{2} \, \frac{\sqrt{|h_n|}}{\sqrt{q_n}} 
& \lesssim \Big| R_{x_0} \Big( \frac{p_n}{q_n} + \epsilon h_n \Big)  - R_{x_0}\Big( \frac{p_n}{q_n} \Big) \Big| + 2\pi \epsilon |h_n|  \\
& \leq \Big| R_{x_0} \Big( \frac{p_n}{q_n} + \epsilon h_n \Big)  - R_{x_0}( t ) \Big| + \Big| R_{x_0} ( t )  - R_{x_0}\Big( \frac{p_n}{q_n} \Big) \Big| + 2\pi \epsilon |h_n| \\
& \lesssim \Big| R_{x_0} \Big( \frac{p_n}{q_n} + \epsilon h_n \Big)  - R_{x_0}( t ) \Big| + \frac{\sqrt{|h_n|}}{\sqrt{q_n}} \left(  1 + O\big( q_n^2 h_n \big) \right) + 2\pi (1 +  \epsilon) |h_n| \\
& \leq \Big| R_{x_0} \Big( \frac{p_n}{q_n} + \epsilon h_n \Big)  - R_{x_0}( t ) \Big| + 2\delta\, \frac{\sqrt{|h_n|}}{\sqrt{q_n}}.
\end{split}
\end{equation} 
In the last line we used the hypothesis of Case 1.2 and $|h_n| \leq \frac{\sqrt{|h_n|}}{\sqrt{q_n}} \, \frac{1}{\sqrt{q_n}}$.
Hence, 
 \begin{equation}
 \Big| R_{x_0} \Big( \frac{p_n}{q_n} + \epsilon h_n \Big)  - R_{x_0}( t ) \Big| 
 \gtrsim \left(  \frac{\sqrt{\epsilon}}{2}  - C\delta \right)   \frac{\sqrt{|h_n|}}{\sqrt{q_n}},
\end{equation}
for some $C>0$.
Fix $\sqrt{\epsilon} = 4C \delta$ small enough.
Writing $p_n/q_n + \epsilon h_n = t - (1-\epsilon) h_n$ and observing that $(1-\epsilon)|h_n| \simeq |h_n|$, we conclude that 
\begin{equation}
 \Big| R_{x_0} \big( t - (1- \epsilon) h_n \big)  - R_{x_0}( t ) \Big| 
 \gtrsim \delta \,  \frac{\sqrt{|h_n|}}{\sqrt{q_n}}
 \geq \delta \, |h_n|^{3/4} 
 \simeq | (1-\epsilon) h_n |^{3/4}, 
 \quad \text{ for large enough } n. 
\end{equation}
Hence $\alpha_{x_0}(t) \leq 3/4$. 
 
\end{itemize}

	\item[\textbf{Case 2}] We have $x_{q_n} \neq 0$ for all large enough $n \in \mathbb N$,
	hence $1/Q \leq q_n x_{q_n} \leq 1/2$. 
	We now use \eqref{eq:Asymptotic_xq_Not_0} which
	has no leading $h^{1/2}$ term. 
	Rewrite it\footnote{
	When $q$ is odd and coprime with $p$, the inverses of 2 and $p$ modulo $q$ exist.
	Therefore, 
	\begin{equation}
	G(p,m,q) = \sum_{r=1}^q e^{2\pi i \frac{pr^2 + mr}{q}}
	= e^{2\pi i (4p)^{-1}\frac{m^2}{q}} \, \sum_{r=1}^q e^{2\pi i p \, \frac{(r + (2p)^{-1}m)^2}{q} } 
	= e^{2\pi i (4p)^{-1}\frac{m^2}{q}} \, G(p,0,q).
	\end{equation}	
	}, 
	assuming $1/Q \leq qx_q \leq 1/2$, 
	as	
	\begin{equation}
	\begin{split}
& R_{x_0}\Big( \frac{p}{q} + h \Big) - R_{x_0}\Big( \frac{p}{q} \Big) + 2\pi i h  \\
& \qquad \quad = 2(1 \pm i) \, \frac{G(p,0,q)}{\sqrt{q}}\, \frac{\sqrt{|h|}}{\sqrt{q}} \,  q^{2} |h| \, \left[  \sum_{m \in \mathbb Z} \, e^{2\pi i (4p)^{-1} \frac{(m_q + m)^2}{q}} \,  
\frac{e^{- 2\pi i \frac{(m - qx_q)^2}{4 q^2 h}}}{ (m - qx_q)^2} \,  + O_Q\left( q^{2} h  \right)
\right].
\end{split}
	\end{equation}
	Define the auxiliary function
	\begin{equation}\label{eq:f_q}
	f_q(y) = \sum_{m \in \mathbb Z} \, e^{2\pi i (4p)^{-1} \frac{ m^2 + 2m_q m}{q}} \,  
\frac{e^{- 2\pi i (m^2  - 2 qx_q m)  y }}{ (m - qx_q)^2}.
	\end{equation}
	Take absolute values and write
	\begin{equation}\label{eq:Asymptotic_f_q}
	\Big|  R_{x_0}\Big( \frac{p}{q} + h \Big) - R_{x_0}\Big( \frac{p}{q} \Big) + 2\pi i h  \Big|
	= 2\sqrt{2}  \frac{\sqrt{|h|}}{\sqrt{q}} \,  q^{2} |h| \,
\left|  f_q\Big( \frac{1}{4q^2 h}  \Big)  + O_Q\left( q^{2} h  \right)
\right|.
	\end{equation}
We now state the properties of this function, whose proof we postpone. 
\begin{lem}\label{thm:Lemma_f_q}
Let $q \in \mathbb N$, let $p \in \mathbb N$ be coprime with $q$ and $f_q$ defined in \eqref{eq:f_q}. 
Then, 
\begin{enumerate}
	\item $f_q$ is periodic of period $Q$.
	\item there exists $y_0^q \in [0,Q]$ depending on $q$ (and on $p$) such that $|f_q(y_0^q) | \geq 5$.
	\item The sequence defined by $y_k^q = y_0^q + kQ$ satisfies 
	\begin{equation}
	\lim_{k \to \infty} y_k^q = \infty, \qquad \text{ and } \qquad  |f_q(y_k^q)| \geq 5, \quad \forall k \in \mathbb N. 
	\end{equation}
\end{enumerate}
\end{lem}
\begin{rem}
The dependence on $p$ of the point $y_0^q$ is irrelevant for our purposes.
Indeed, once we fix $t \not\in \mathbb Q$, we get the sequence of approximations $p_n/q_n$, 
hence each $q_n$ comes with one and only one $p_n$. 
Hence, we can assume that the sequence $f_{q_n}$ only depends on $q_n$. 
\end{rem}
We now evaluate \eqref{eq:Asymptotic_f_q} at $p_n/q_n$ and $h_n = t - p_n/q_n$ and we separate two cases:
\begin{itemize}
	\item[\textbf{Case 2.1.}] Suppose $\limsup_{n \to \infty} q_n^2 |h_n| > 0$, so that there exists $c > 0$ and a subsequence for which $c < q_n^2 |h_n| \leq 1$.
	Then, from \eqref{eq:Asymptotic_f_q} we get
		\begin{equation}
	\Big|  R_{x_0}( t ) - R_{x_0}\Big( \frac{p_n}{q_n} \Big) + 2\pi i h_n  \Big|
	\geq c  \frac{\sqrt{|h_n|}}{\sqrt{q_n}} \,
\left|  f_{q_n}\Big( \frac{1}{4q_n^2 h_n}  \Big)  + O_Q\left( q_n^2 h_n  \right)
\right|.
	\end{equation}
	Fix $\delta>0$ which we later determine. Proceeding like in \textbf{Case 1}, we separate two cases:
	\begin{itemize}
		\item[\textbf{Case 2.1.1.}] Suppose $\left|  f_{q_n}\Big( \frac{1}{4q_n^2 h_n}  \Big)  + O_Q\left( q_n^2 h_n  \right) \right| \geq \delta$ for infinitely many $n$. Then, 
		\begin{equation}
	\Big|  R_{x_0}( t ) - R_{x_0}\Big( \frac{p_n}{q_n} \Big) + 2\pi i h_n  \Big|
	\geq c \delta \frac{\sqrt{|h_n|}}{\sqrt{q_n}} 
	\geq c\delta |h_n|^{3/4} 
	\end{equation}
		for infinitely many $n$, which implies $\alpha_{x_0}(t) \leq 3/4$.
				
		\item[\textbf{Case 2.1.2.}] Suppose $\left|  f_{q_n}\Big( \frac{1}{4q_n^2 h_n}  \Big)  + O_Q\left( q_n^2 h_n  \right) \right| < \delta$ for all large enough $n$. 
		Then, let $\epsilon_n$ be a sequence which we determine later,
		and define $\eta_n = \epsilon_n /q_n^2$. 
		Observe that $\eta_n = \epsilon_n |h_n| / (q_n^2 |h_n|) \simeq \epsilon_n |h_n|$. 
		Evaluate \eqref{eq:Asymptotic_f_q} at $\eta_n$ to get 
		\begin{equation}
		\begin{split}
		\Big|  R_{x_0}\Big( \frac{p_n}{q_n} + \eta_n \Big) - R_{x_0}\Big( \frac{p_n}{q_n} \Big) + 2\pi i \eta_n  \Big|
	& = 2\sqrt{2}  \frac{\sqrt{\eta_n}}{\sqrt{q_n}} \,  q_n^2 \eta_n \,
\left|  f_q\Big( \frac{1}{4q_n^2 \eta_n}  \Big)  + O_Q\left( q_n^{2} \eta_n  \right)
\right| \\
& = 2\sqrt{2} \,   \epsilon_n \,  \frac{\sqrt{\eta_n}}{\sqrt{q_n}} \,  
\left|  f_{q_n}\Big( \frac{1}{4 \epsilon_n}  \Big)  + O_Q\left( \epsilon_n  \right)
\right|. 
\end{split}
		\end{equation}
		Fix $k \in \mathbb N$ large enough and set $\epsilon_n = 1/(4y_k^{q_n})$.
		Then, by Lemma~\ref{thm:Lemma_f_q} (c), 
		\begin{equation}
		\Big|  f_{q_n}\Big( \frac{1}{4 \epsilon_n}  \Big) \Big| = \left|  f_{q_n}( y_K^{q_n}) \right| \geq 5,  \quad \forall n \text{ large enough.}
		\end{equation}
		Since $\epsilon_n \simeq 1/(kQ)$, 
		if $k \in \mathbb N$ is large enough we get $ O_Q\left( \epsilon_n  \right) \leq C_Q \epsilon_n \leq 1$.
		In particular, $|h_n| \simeq_Q k \eta_n$. 
		Therefore, 
		\begin{equation}
		\Big|  R_{x_0}\Big( \frac{p_n}{q_n} + \eta_n \Big) - R_{x_0}\Big( \frac{p_n}{q_n} \Big) + 2\pi i \eta_n  \Big|
 \geq  \epsilon_n\,  \frac{\sqrt{\eta_n}}{\sqrt{q_n}} 
 \simeq \epsilon_n^{3/2} \frac{\sqrt{|h_n|}}{\sqrt{q_n}}. 
		\end{equation}
		With this, and using the assumption of this case in \eqref{eq:Asymptotic_f_q}, we write
		\begin{equation}
		\begin{split}
		\epsilon_n^{3/2} \frac{\sqrt{|h_n|}}{\sqrt{q_n}}
		&  \lesssim \Big|  R_{x_0}\Big( \frac{p_n}{q_n} + \eta_n \Big) - R_{x_0}\Big( \frac{p_n}{q_n} \Big)\Big| + 2\pi \eta_n   \\
		& \leq \Big|  R_{x_0}\Big( \frac{p_n}{q_n} + \eta_n \Big) - R_{x_0}(t) \Big| 
		+ \Big| R_{x_0}(t) - R_{x_0}\Big( \frac{p_n}{q_n} \Big)\Big| 
		+ 2\pi \eta_n \\
		& \lesssim \Big|  R_{x_0}\Big( \frac{p_n}{q_n} + \eta_n \Big) - R_{x_0}(t) \Big| 
		+ \delta \frac{\sqrt{|h_n|}}{\sqrt{q_n}} 
		+ 2\pi ( |h_n|  + \eta_n)   \\
		& \lesssim \Big|  R_{x_0}\Big( \frac{p_n}{q_n} + \eta_n \Big) - R_{x_0}(t) \Big| 
		+ \delta \frac{\sqrt{|h_n|}}{\sqrt{q_n}},
		\end{split}
		\end{equation}
	for large enough $n$, 
	where in the last line we used $\eta_n \simeq |h_n|/k \leq |h_n|$
	and $|h_n| \leq  \frac{\sqrt{|h_n|}}{\sqrt{q_n}} \, \frac{1}{\sqrt{q_n}} \ll  \frac{\sqrt{|h_n|}}{\sqrt{q_n}}$. 
	Since $\epsilon_n \simeq_Q 1/k$, set  $\delta = 1/(c_Qk^{3/2})$ with some small enough $c_Q>0$ so that
	\begin{equation}
	\Big|  R_{x_0}\Big( \frac{p_n}{q_n} + \eta_n \Big) - R_{x_0}(t) \Big| 
	\gtrsim \Big(  \epsilon_n^{3/2} - C\delta \Big) \frac{\sqrt{|h_n|}}{\sqrt{q_n}} 
	\gtrsim \delta \frac{\sqrt{|h_n|}}{\sqrt{q_n}} 
	\geq \delta  |h_n|^{3/4}. 
	\end{equation}
	Write $p_n/q_n + \eta_n = t - (h_n -  \eta_n)$. 
	Since $|h_n - \eta_n| \leq 2|h_n|$, 
	we get 
	\begin{equation}
	\Big|  R_{x_0}\Big(  t - (h_n -  \eta_n) \Big) - R_{x_0}(t) \Big| 
	\geq \delta |h_n|^{3/4} 
	\gtrsim \delta |h_n -  \eta_n|^{3/4}, \quad \text{ for large enough } n, 
	\end{equation}
	which implies $\alpha_{x_0}(t) \leq 3/4$. 
		
	\end{itemize}

	\item[\textbf{Case 2.2.}] Suppose $\lim_{n \to \infty} q_n^2 |h_n| = 0$. 
	In this case, the term $q_n^2 |h_n|$ in \eqref{eq:Asymptotic_f_q} tends to zero, which kills the desired $|h_n|^{3/4}$ that came from $\sqrt{h_n}/\sqrt{q_n}$. 
	To counter that, define $\eta_n = \epsilon_n / q_n^2$ 
	as in Case 2.1.2. By \eqref{eq:Asymptotic_f_q}, 
	\begin{equation}
	\Big|  R_{x_0}\Big( \frac{p_n}{q_n} + \eta_n \Big) - R_{x_0}\Big( \frac{p_n}{q_n} \Big) + 2\pi i \eta_n  \Big|
	= 2\sqrt{2} \epsilon_n \frac{\sqrt{\eta_n}}{\sqrt{q_n}} \,
\left|  f_{q_n}\Big( \frac{1}{4\epsilon_n}  \Big)  + O_Q\left(\epsilon_n  \right)
\right|.
	\end{equation}
	Fix $k \in \mathbb N$ large enough and set $\epsilon_n = 1/(4y_k^{q_n})$. Then,
	\begin{equation}
	\left|  f_{q_n}\Big( \frac{1}{4\epsilon_n}  \Big)  \right| 
	= \left|  f_{q_n}\Big( y_k^{q_n} \Big)  \right| \geq 5, 
	\quad \text{ and } \quad
	O_Q(\epsilon_n) \leq C_Q \epsilon_n = \frac{C_Q}{4y_k^{q_n}} \simeq \frac{C_Q}{kQ} \leq 1,
	\end{equation}
	so
	\begin{equation}
	\Big|  R_{x_0}\Big( \frac{p_n}{q_n} + \eta_n \Big) - R_{x_0}\Big( \frac{p_n}{q_n} \Big) + 2\pi i \eta_n  \Big|
	\geq  \epsilon_n \frac{\sqrt{\eta_n}}{\sqrt{q_n}}. 
	\end{equation}
	With this and \eqref{eq:Asymptotic_f_q}, we can write
	\begin{equation}
	\begin{split}
	\epsilon_n \frac{\sqrt{\eta_n}}{\sqrt{q_n}} 
	&  \leq \Big|  R_{x_0}\Big( \frac{p_n}{q_n} + \eta_n \Big) - R_{x_0}\Big( \frac{p_n}{q_n} \Big) \Big| + 2\pi \eta_n  \\
	 &  \leq \Big|  R_{x_0}\Big( \frac{p_n}{q_n} + \eta_n \Big) -R_{x_0}(t) \Big| + \Big| R_{x_0}(t) -  R_{x_0}\Big( \frac{p_n}{q_n} \Big) \Big| + 2\pi \eta_n  \\ 
	 &  \lesssim \Big|  R_{x_0}\Big( \frac{p_n}{q_n} + \eta_n \Big) -R_{x_0}(t) \Big| + \frac{\sqrt{|h_n|}}{\sqrt{q_n}} \, q_n^2 |h_n| + 2\pi (\eta_n + |h_n|).
	 \end{split}
	\end{equation}
	Since $\lim_{n \to \infty} q_n^2 |h_n|=0$ implies $h_n = o(\eta_n)$, and $\eta_n =  \frac{\sqrt{\eta_n}}{\sqrt{q_n}} \frac{\sqrt{\epsilon_n}}{\sqrt{q_n}}$, 
	we get
	\begin{equation}
	\Big|  R_{x_0}\Big( \frac{p_n}{q_n} + \eta_n \Big) -R_{x_0}(t) \Big|
	\gtrsim \Big( \epsilon_n - q_n^2 h_n - \frac{\sqrt{\epsilon_n}}{ \sqrt{q_n} } \Big) \frac{\sqrt{\eta_n}}{\sqrt{q_n}}
	\geq \frac{\epsilon_n}{2} \, \frac{\sqrt{\eta_n}}{\sqrt{q_n}}
	=  \frac{\epsilon_n^{3/4}}{2} \, \eta_n^{3/4}.
	\end{equation}
	Write $p_n/q_n + \eta_n = t + (\eta_n - h_n)$. 
	Recalling $\epsilon_n \simeq 1/(kQ)$ for all $n$, and since $h_n = o(\eta_n)$ implies $|\eta_n - h_n| \simeq \eta_n$,  we conclude
	\begin{equation}
	\Big|  R_{x_0}\Big( t + (\eta_n - h_n) \Big) -R_{x_0}(t) \Big|
	\geq  \frac{\epsilon_n^{3/4}}{2} \, \eta_n^{3/4}
	\simeq_Q |\eta_n - h_n|^{3/4}, 
	\end{equation}
	and therefore $\alpha_{x_0}(t) \leq 3/4$. 
\end{itemize}

\end{itemize}
\end{proof}

We now prove Lemma~\ref{thm:Lemma_f_q}. 
\begin{proof}[Proof of Lemma~\ref{thm:Lemma_f_q}]
$(a)$ Write first 
\begin{equation}
0 \neq qx_q 
= q \min_{m \in \mathbb Z} \Big|x_0 - \frac{m}{q} \Big|
= q \,  \Big|x_0 - \frac{m_q}{q} \Big|
= \frac{1}{Q} \,  | Pq - Q m_q | 
= \frac{m_q'}{Q},
\end{equation}
where $m_q' = |Pq - Qm_q| \in \mathbb N \setminus \{0\}$. 
Hence, the variable $y$ in \eqref{eq:f_q} only appears in  
\begin{equation}
e^{2\pi i (m^2 - 2qx_q m) \, y } = e^{2\pi i (Qm^2 - 2 m_q' m) \frac{y}{Q} }, 
\end{equation}
which is $Q$-periodic. Hence $f_q$ has period $Q$. 

$(b)$ Split the sum in $f_p$ in the terms $m=0,1$ and the rest,  
\begin{equation}
f_q(y) 
=   \frac{1}{ (qx_q)^2} 
+ e^{2\pi i (4p)^{-1} \frac{ 1 + 2m_q }{q}} \,  \frac{e^{- 2\pi i (1  - 2 qx_q)  y }}{ (1 - qx_q)^2}
+ \text{Error}
\end{equation}
where $1/Q \leq qx_q \leq 1/2$ implies
\begin{equation}
\begin{split}
|\text{Error}| 
& = \Big| \sum_{m \neq 0,1} \, e^{2\pi i (4p)^{-1} \frac{ m^2 + 2m_q m}{q}} \,  
\frac{e^{- 2\pi i (m^2  - 2 qx_q m)  y }}{ (m - qx_q)^2} \Big|  \\
& \leq  \sum_{m=2}^\infty \frac{1}{ (m - qx_q)^2} +  \sum_{m=1}^\infty \frac{1}{ (m + qx_q)^2} 
\leq \sum_{m=2}^\infty \frac{1}{ (m - 1/2)^2} +  \sum_{m=1}^\infty \frac{1}{ m^2} 
= \frac{\pi^2}{2} - 4 + \frac{\pi^2}{6} \leq 3. 
\end{split}
\end{equation}
On the other hand, the phase in 
\begin{equation}
e^{2\pi i (4p)^{-1} \frac{ 1 + 2m_q }{q}} \, e^{- 2\pi i (1  - 2 qx_q)  y }.
\end{equation}
is continuous, decreasing, and $Q$-periodic. 
That implies that there exists $y_0^q \in [0,Q]$ such that  
$e^{2\pi i (4p)^{-1} \frac{ 1 + 2m_q }{q}} \, e^{- 2\pi i (1  - 2 qx_q)  y_0^q } = 1$, 
and consequently, 
\begin{equation}
|f_q(y_0^q)| \geq \frac{1}{ (qx_q)^2} +  \frac{1}{ (1 - qx_q)^2} - 3 \geq \frac{1}{ (1/2)^2} +  \frac{1}{ (1 - 1/2)^2} - 3 = 5
\end{equation} 
because in $(0,1)$ the function $1/x^2 + 1/(1-x)^2$ has a minimum in $x=1/2$. 

$(c)$ The fact that $f_q$ is $Q$-periodic implies that 
$|f_q(y_n^q)| = |f_q(y_0^q + nQ)| =  |f_q(y_0^q )| \geq 5$. 
\end{proof}

We now complete the proof of Proposition~\ref{thm:Holder_Lower_Bound_At_Rationals}.
\begin{prop}\label{thm:Rationals32}
Let $x_0 \in \mathbb R$ and $t \in \mathbb Q$. 
If $\alpha_{x_0}(t) \neq 1/2$, then $\alpha_{x_0}(t) = 3/2 $. 
\end{prop}
\begin{proof}
By Proposition~\ref{thm:Asymptotic_Alternative}, 
$\alpha_{x_0}(t) = 1/2$ happens only if $x_q = 0$ and $G(p,m_q,q) \neq 0$. 

$\bullet$ If $x_q = 0$ and $G(p,m_q,q) = 0$, then $x_0 \in \mathbb Q$ and $q \in 2\mathbb N$.  
From \eqref{eq:Asymptotic_xq_Equal_0}
and the fact that
\begin{equation}\label{eq:Gauss_Sums_Complete}
G(p,m,q) = \left\{ 
\begin{array}{ll}
e^{2\pi i (4p)^{-1} m^2/q} \, G(p,0,q), & q \text{ odd,}\\
e^{2\pi i p^{-1} (m/2)^2/q} \,  G(p,0,q), & q \equiv 0 \pmod{4} \text{ and } m \text{ even}, \\
e^{2\pi i p^{-1} ((m-1)/2)^2/q} e^{2\pi i p^{-1} ((m-1)/2)/q} \, G(p,1,q), & q \equiv 2 \pmod{4} \text{ and } m \text{ odd}, 
\end{array}
\right.
\end{equation}
and $G(p,m,q) = 0$ otherwise, we have 
\begin{equation}\label{eq:Extra_Rationals_xq_0}
R_{x_0}\Big( \frac{p}{q} + h \Big) - R_{x_0}\Big( \frac{p}{q} \Big) + 2\pi i h  
= 2(1 \pm i)\, q^{3/2} |h|^{3/2} \,\sum_{m \text{ odd}} \, \frac{ G(p,m_q + m,q)}{\sqrt{q}} \,  \frac{e^{- 2\pi i \frac{m^2}{4 q^2 h}}}{ m^2}
 + O\left( q^{7/2} h^{5/2}  \right).
\end{equation}
It suffices to find a sequence $y_k \to \infty$ such that $|g(y_k)| \geq c > 0$ for some $c > 0$, where
\begin{equation}
g(y) = \sum_{m \text{ odd}} \, \frac{ G(p,m_q + m,q)}{\sqrt{q}} \,  \frac{e^{- 2\pi i m^2 y}}{ m^2},
\end{equation}
because that way, defining $h_k = 1/(4q^2 y_k)$, we get 
\begin{equation}
\Big| R_{x_0}\Big( \frac{p}{q} + h_k \Big) - R_{x_0}\Big( \frac{p}{q} \Big) + 2\pi i h_k \Big|
 \gtrsim_q h_k^{3/2} |g(y_k)| - O(h_k^{5/2}) 
\gtrsim_q h_k^{3/2}
\end{equation}
for all $k$ large enough, hence $\alpha_{x_0}(t) \leq 3/2$. 
So let us find that sequence $y_k$. 
According to \eqref{eq:Gauss_Sums_Complete},  
if $q \equiv 0 \pmod{4}$, by symmetry we can write
\begin{equation}
\begin{split}
g(y) & = \frac{G(p,0,q)}{\sqrt{q}} \,  \sum_{m \geq 0 \text{ odd}} \frac{e^{- 2\pi i m^2 y}}{ m^2} \Big(  e^{2\pi i p^{-1} \left( \frac{m_q + m}{2}  \right)^2 \frac{1}{q} } + e^{2\pi i p^{-1} \left( \frac{m_q - m}{2}  \right)^2 \frac{1}{q} }   \Big) \\
& =  2 \, \frac{G(p,0,q)}{\sqrt{q}} \, e^{2\pi i p^{-1} \frac{m_q^2}{4q}  } \, 
\sum_{m \geq 0 \text{ odd}} \frac{e^{- 2\pi i m^2 (y - \frac{ p^{-1} }{4q} ) }}{ m^2} \cos\left(  2\pi  \frac{p^{-1}  m_q }{2q} \, m \right).
\end{split}
\end{equation}
On the other hand, if $q \equiv 2 \pmod{4}$, then 
\begin{equation}
\begin{split}
g(y) & = \frac{G(p,1,q)}{\sqrt{q}} \,  \sum_{m \geq 0 \text{ odd}} \frac{e^{- 2\pi i m^2 y}}{ m^2}  \Big( e^{2\pi i p^{-1} \Big[ \left( \frac{m_q + m - 1}{2}  \right)^2 + \frac{m_q + m - 1}{2} \Big] \frac{1}{q} } + e^{2\pi i p^{-1} \Big[ \left( \frac{m_q - m - 1}{2}  \right)^2 + \frac{m_q - m - 1}{2} \Big] \frac{1}{q} }   \Big) \\
& =  2 \, \frac{G(p,1,q)}{\sqrt{q}} \, e^{2\pi i p^{-1} \frac{(m_q - 1)^2 + 2(m_q - 1)}{4q}  } \, 
\sum_{m \geq 0 \text{ odd}} \frac{e^{- 2\pi i m^2 (y - \frac{ p^{-1} }{4q} ) }}{ m^2} \, \cos\left(  2\pi   \frac{ p^{-1} m_q   }{2q} \, m \right).
\end{split}
\end{equation}
Choose the sequence $y_k = p^{-1}/(4q) + k $ for $k \in \mathbb N$. 
Then, since $x_q = |x_0 - m_q/q| = 0$ implies $x_0 = m_q/q$, 
but also $x_0 = P/Q$ in its reduced form, we get
\begin{equation}\label{eq:g_as_Fourier_Series}
|g(y_k)|  \simeq  
\left| \sum_{m = 0}^\infty \frac{\cos\left(  \pi  \frac{p^{-1}  P }{Q} \, (2m+1) \right)}{ (2m+1)^2} \right|, \qquad \forall k \in \mathbb N. 
\end{equation}
Define the Fourier series 
\begin{equation}
G(z)  =  
\sum_{m = 0}^\infty \frac{\cos\left(  (2m+1) \pi  z \right)}{ (2m+1)^2} = \frac{ \pi^2}{8} (1 - |2z|) \qquad z \in (-1,1),
\end{equation}
so that, after extending periodically to $\mathbb R$, in view of \eqref{eq:g_as_Fourier_Series}, we have  
$|g(y_n)| = |G(p^{-1}P/Q)|$ for all $n \in \mathbb N$. 
Observe that the only zeros of $G$ are $(2m+1)/2$ for $m \in \mathbb Z$.
We separate two cases again. If $q \equiv 0 \pmod{4}$, by \eqref{eq:Gauss_Sums_Complete} $m_q$ must be odd. 
Then $Qm_q = Pq$ implies $4 \mid Q$, hence both $p^{-1}$ and $P$ are odd.
We deduce $p^{-1}P/Q \neq (2m+1)/2$ for any $m \in \mathbb Z$, because otherwise $p^{-1}P = (2m+1) Q/2$ for some $m$, so $p^{-1}P$ would be even. 
If $q \equiv 2 \pmod{4}$, then $m_q$ is even and $Q(m_q/2) = P(q/2)$ implies that $Q$ is odd. Hence $p^{-1}P/Q \neq (2m+1)/2$ for any $m\in\mathbb Z$.  
In both cases, this implies that $|g(y_k)| = |G(p^{-1}P/Q)| \neq 0$
for all $k$, which is what we wanted to prove. 

$\bullet$ If $x_q \neq 0$, according to \eqref{eq:Asymptotic_xq_Not_0} we get
\begin{equation}\label{eq:Extra_Rationals_xq_Not0}
\begin{split}
\Big| R_{x_0}\Big( \frac{p}{q} + h \Big) - R_{x_0}\Big( \frac{p}{q} \Big) + 2\pi i h \Big|
\simeq \Big| (qh)^{3/2} \,\sum_{m \in \mathbb Z} \, \frac{ G(p,m_q + m,q)}{\sqrt{q}} \,  
\frac{e^{- 2\pi i \frac{(m - qx_q)^2}{4 q^2 h}}}{ (m - qx_q)^2} \,  + O\left( q^{7/2} h^{5/2}  \right) \Big|
\end{split}
\end{equation}
because $0 < qx_q \leq 1/2$. 
If $q$ is odd, we use \eqref{eq:Gauss_Sums_Complete} and the definition of $f_q$ in \eqref{eq:f_q}
to write 
\begin{equation}\label{eq:Extra_Rationals_xq_Not0_fq}
 \Big|R_{x_0}\Big( \frac{p}{q} + h \Big) - R_{x_0}\Big( \frac{p}{q} \Big) + 2\pi i h \Big|  
\simeq q^{3/2} h^{3/2} \,\Big| f_q\Big( \frac{1}{4q^2h} \Big) + O\big( q^2h\big) \Big|. 
\end{equation}
With the definition of $y_k^q$ in Lemma~\ref{thm:Lemma_f_q},  choose the sequence $h_k = 1/(4q^2y_k^q)$ that tends to zero and for which $|f_q(1/(4q^2h_k^q))| = |f_q(y^q_k)| \geq 5$. This and \eqref{eq:Extra_Rationals_xq_Not0_fq} show that $\alpha_{x_0}(t) = 3/2$. 
When $q$ is even,
by \eqref{eq:Gauss_Sums_Complete},
the sum in \eqref{eq:Extra_Rationals_xq_Not0}  only has either even or odd terms.
The main term is $m=0$ if even terms survive, and $m=1$ if odd terms survive,
and crude estimates in the error suffice to conclude. 

\end{proof}

\bibliographystyle{acm}
\bibliography{Multifractality_R_x0.bib}

%

\section*{Acknowledgements}
L. Vega is thankful to S. Jaffard and S. Seuret for insightful conversations.

\section*{Funding}
V. Banica is partially supported by the Institut Universitaire de France, by the French ANR project SingFlows. 
D. Eceizabarrena is funded in part by the 
Simons Foundation Collaboration Grant on Wave Turbulence (Nahmod’s award
ID 651469) and by the American Mathematical Society and the Simons Foundation under an AMS-Simons Travel Grant for the period 2022-2024. 
A. Nahmod is funded in part by NSF DMS-2052740, NSF DMS-2101381
and the Simons Foundation Collaboration Grant on Wave Turbulence (Nahmod’s award
ID 651469).
L. Vega is funded in part by MICINN (Spain) projects Severo Ochoa CEX2021-001142, and PID2021-126813NB-I00 (ERDF A way of making Europe), and by Eusko Jaurlaritza project IT1615-22 and BERC program.

\section*{Statements}
On behalf of all authors, the corresponding author states that there is no conflict of interest.

\end{document}